\newlist{assumption}{enumerate}{1}
\setlist[assumption]{label=(\textsc{a}\arabic*)}
\crefname{assumptioni}{Assumption}{Assumptions}
\newcommand{\R}{\mathbb{R}}
\newcommand{\N}{\mathbb{N}}
\newcommand{\1}{\mathbb{1}}
\DeclareMathOperator{\sign}{\mathrm{sign}}
\DeclareMathOperator{\div}{\mathrm{div}}
\DeclareMathOperator{\Id}{\mathrm{Id}}
\def\weakto{\rightharpoonup}
\renewcommand{\epsilon}{\varepsilon}
\title{Optimal control of a non-smooth quasilinear elliptic equation}
\author{Christian Clason\thanks{Faculty of Mathematics, University of Duisburg-Essen, Thea-Leymann-Strasse 9, 45127 Essen, Germany\newline%
    (\email{christian.clason@uni-due.de}, \email{huu.vu@uni-due.de}, \email{arnd.roesch@uni-due.de})}
    \and Vu Huu Nhu\footnotemark[1]
    \and Arnd Rösch\footnotemark[1]
}
\begin{document}
\maketitle

\begin{abstract}
    This work is concerned with an optimal control problem governed by a non-smooth quasilinear elliptic equation with a nonlinear coefficient in the principal part that is locally Lipschitz continuous and directionally but not Gâteaux differentiable. This leads to a control-to-state operator that is directionally but not Gâteaux differentiable as well. Based on a suitable regularization scheme, we derive C- and strong stationarity conditions. Under the additional assumption that the nonlinearity is a $PC^1$ function with countably many points of nondifferentiability, we show that both conditions are equivalent. Furthermore, under this assumption we derive a relaxed optimality system that is amenable to numerical solution using a semi-smooth Newton method. This is illustrated by numerical examples.

    \medskip

    \noindent\textcolor{structure}{Key words}\quad Optimal control, non-smooth optimization, optimality system, quasilinear elliptic equation.
\end{abstract}

\section{Introduction}

This work is concerned with the non-smooth quasilinear elliptic optimal control problem
\begin{equation*}
    \left\{
        \begin{aligned}
            \min_{u\in L^p(\Omega), y\in H^1_0(\Omega)} &J(y,u) \\
            \text{s.t.} \quad &-\div [a(y)\nabla y + b(\nabla y)] = u \quad \text{in } \Omega
        \end{aligned}
    \right.
\end{equation*}
with a Fréchet differentiable functional $J:H^1_0(\Omega)\times L^p(\Omega)\to\R$ for a domain $\Omega\subset \R^N$, $p > \frac{N}{2}$, a non-smooth function $a: \R \to \R$, and a continuously differentiable vector-valued function $b: \R^N \to \R^N$; see \cref{sec:problem_statement} for a precise statement. State equations with a similar structure appear for example in models of heat conduction, where $a$ stands for the heat conductivity and depends on the temperature $y$ (see, e.g., \cite{Bejan2013,ZeldovichRaizer1966}); the vector-valued function $b$ is a generalized advection term. The salient point, of course, is the non-differentiability of the conduction coefficient $a$ that allows for different behavior in different temperature regimes with sharp phase transitions but makes the analytic and numerical treatment challenging.

The study of optimal control problems for non-smooth partial differential equations is relatively recent, and previous works have focused on problems for non-smooth semilinear equations, see \cite{MeyerSusu2017,MeyerSusu2017} as well as the pioneering work \cite[Chap.~2]{Tiba1990}. To the best of our knowledge, this is the first work to treat non-smooth quasi-linear problems.

The main difficulty in the treatment of such control problems is the derivation of useful optimality conditions. Following \cite{MeyerSusu2017,Constantin2017,Tiba1990,Barbu1984,NeittaanmakiTiba1994}, we will use a regularization approach to approximate the original problem by corresponding regularized problems that allows obtaining regularized optimality conditions. By passing to the limit, we then derive so-called C-stationarity conditions involving Clarke's generalized gradient of the non-smooth term. However, unlike non-smooth semilinear equations, the main difficulty in our case is that the nonlinearity appears in the higher-order term, which presents an obstacle to directly passing to the limit in the regularized optimality systems.
To circumvent this, we will follow a dual approach, where instead of passing to the limit in the regularized adjoint equation, we shall do so in a linearized state equation and apply a duality argument.
We also derive strong stationarity conditions involving a sign condition on the adjoint state and show that under additional assumptions on the non-smooth nonlinearity (piecewise differentiability ($PC^1$) with countably many points of non-differentiability), both stationarity conditions coincide. Furthermore, in this case a relaxed optimality system can be derived that is amenable to numerical solution using a semi-smooth Newton method.

The paper is organized as follows. This introduction ends with some notations used throughout the paper. \cref{sec:problem_statement} then gives a precise statement of the optimal control problem together with the fundamental assumptions.
The following \cref{sec:quasilinear} is devoted to the directional differentiability and regularizations of the control-to-state operator. These will be used in \cref{sec:OS} to derive C- and strong stationarity conditions. \Cref{sec:PC1} then considers the special case that $a$ is a countably $PC^1$.
Numerical examples illustrating the solution of the relaxed optimality conditions are presented in \cref{sec:numerical_experiment}.

\paragraph*{Notations.}
By $\Id$, we denote the identity operator in a Banach space $X$. For a given point $u\in X$ and $\rho>0$, we denote by $B_X(u,\rho)$ and $\overline B_X(u,\rho)$, the open and closed balls, respectively, of radius $\rho$ centered at $u$.
For $u\in X$ and $\xi \in X^*$, the dual space of $X$, we denote by $\left \langle \xi, y \right\rangle$ their duality product. For Banach spaces $X$ and $Y$, the notation $X \hookrightarrow Y$ means that $X$ is continuously embedded in $Y$ and $X \Subset Y$ means that $X$ is compact embedded in $Y$.

If $K$ is a measurable subset in $\R^d$, the notation $|K|$ stands for the $d$-dimensional Lebesgue measure of $K$.
For a function $f:\Omega\to \R$ defined on a domain $\Omega\subset \R^d$ and $t\in \R$, the symbols $\{f \geq t\}$ and $\{f= t\}$ stand for the sets of a.e. $x \in \Omega$ such that $f(x) \geq t$ and $f(x) =t$, respectively. For any set $A\subset \Omega$, the symbol $\1_{A}$ denotes the indicator function of $A$, i.e., $\1_A(x) = 1$ if $x \in A$ and $\1_A(x) =0$ otherwise.

Finally, $C$ stands for a generic positive constant, which may be different at different places of occurrence. We also write, e.g., $C(\tau)$ for a constant depending only on the parameter $\tau$.

\section{Problem statement}\label{sec:problem_statement}

Let $\Omega$ be a bounded domain in $\R^N$, $N \geq 2$, with $C^{1}$-boundary $\partial\Omega$.
We consider for $p>\frac{N}2$ the optimal control problem
\begin{equation}\label{eq:P}
    \tag{P}
    \left\{
        \begin{aligned}
            \min_{u\in L^p(\Omega), y\in H^1_0(\Omega)} &J(y,u) \\
            \text{s.t.} \quad
            &\begin{aligned}[t]
                -\div [a(y)\nabla y + b(\nabla y)] &= u && \text{in } \Omega,\\
                y &= 0 && \text{on } \partial\Omega.
            \end{aligned}
        \end{aligned}
    \right.
\end{equation}

For the remainder of this paper, we make the following assumptions.
\begin{assumption}
\item \label{ass:quasi_func1}
    The function $a: \R \to \R$ is directionally differentiable, i.e., for any $y, h \in \R$ the limit
    \begin{equation*}
        a'(y;h) := \lim\limits_{t \to 0^+}\frac{a(y + th) - a(y)}{t}
    \end{equation*}
    exists. Moreover, $a$ satisfies
    \begin{equation*}
        a(y) \geq a_0 > 0 \quad \text{for all } y \in \R
    \end{equation*}
    for some constant $a_0$. In addition, for each $M>0$ there exists a constant $C_M>0$ such that
    \begin{equation*}
        |a(y_1) - a(y_2)| \leq C_M|y_1 - y_2| \quad \text{for all } y_i \in \R, |y_i| \leq M, i=1,2.
    \end{equation*}
\item \label{ass:quasi_func2}
    The function $b: \R^N \to \R^N$ is monotone, globally Lipschitz continuous, and continuously differentiable. Moreover, if $N\geq 3$, there exist constants $C_b > 0$ and $\sigma \in (0,1)$ such that
    \begin{equation*} \label{eq:add_regu}
        |b(\xi)| \leq C_b \left(1 + |\xi|^{\sigma} \right) \quad \text{for all} \quad \xi \in \R^N.
    \end{equation*}
\item \label{ass:cost_func}
    The cost function $J: H^1_0(\Omega) \times L^p(\Omega) \to \R$ is weakly lower semicontinuous and continuously Fr\'{e}chet differentiable. Furthermore, for any $M>0$, there exists a function $g_M: [0, \infty) \to \R$ such that $\lim_{t \to +\infty}g_M(t) = +\infty$ and for any $y \in H^1_0(\Omega) \cap C(\overline{\Omega})$ and $u \in L^p(\Omega)$ with $\left \|y \right\|_{H^1_0(\Omega)} + \left \|y \right\|_{C(\overline\Omega)} \leq M \left \|u \right\|_{L^p(\Omega)}$, it holds that
    \begin{equation*}
        J(y,u) \geq g_M \left(\left\|u \right\|_{L^p(\Omega)}\right).
    \end{equation*}
\end{assumption}

\begin{example}
    \cref{ass:quasi_func1} is satisfied, e.g., for the class of functions defined by
    \begin{equation*} 
        a(t) = a_0 + \sum_{i=1}^{k+1} \1_{(t_{i-1},t_i]}(t)a_i(t)  \quad \text{for all } t \in \R,
    \end{equation*}
    where $a_0 >0$, $-\infty =: t_0 <t_1 < t_2 < \cdots < t_k < t_{k+1} := \infty$ with the convention $(t_k, t_{k+1}] := (t_k, \infty)$, and $a_i$, $1 \leq i \leq k+1$, are nonnegative $C^1$-functions on $\R$ satisfying
    \begin{equation*}
        a_{i}(t_i) = a_{i+1}(t_i) \quad \text {for all} \quad 1 \leq i \leq k.
    \end{equation*}
    Note that such functions are continuous and piecewise continuously differentiable ($PC^1$) with a finite set of points of non-differentiability, see \cref{sec:PC1};
    explicit examples from this class are, e.g., $a(t) = 1+|t|$ (cf.~\cref{sec:semi-smooth}) or $a(t) = \max\{1,t\}$.

    An example that is not $PC^1$ is the following. Let 
    \begin{align*}
        f(t) &= 
        \begin{cases} 
            t^2 \sin(t^{-1}) & \text{if }t\neq 0,\\
            0 & \text{if }t= 0,\\
        \end{cases}
        \shortintertext{and}
        a(t) &= 1+\max\{f(t),0\}.
    \end{align*}
    Then $a$ is Lipschitz continuous (as the pointwise maximum of Lipschitz continuous functions) and directionally differentiable but not $PC^1$ since $f'$ is not continuous in $t=0$. (A more pathological example can be found in \cite[Ex.~5.3]{Kummer:2002}.)
\end{example}

\begin{example}
    Let $f$ be a $C^1$ function such that for some constants $C_1, C_2 >0$ and $\sigma \in (0,1)$,
    \begin{equation*}
        \label{eq:exam}
        |f(t)t| \leq C_1(1 + t^\sigma), \quad 0 \leq f(t) + f'(t)t \leq C_2, \quad\text{for all } t\geq 0.
    \end{equation*}
    Then the vector-valued function $b: \R^N \to \R^N$ given by $b(\xi) := f(|\xi|)\xi$ satisfies \cref{ass:quasi_func2}. For example, we can choose
    \begin{enumerate}[label=(\roman*)]
        \item $b(\xi) = \left(1+ |\xi| \right)^{r-2}\xi$ with $r \in (1,2]$ for $N=2$ and $r \in (1,2)$ for $N\geq 3$;
        \item $b(\xi) =(1+|\xi|^2)^{-1/2}\xi$.
    \end{enumerate}
\end{example}

\section{Properties of the control-to-state operator} \label{sec:quasilinear}

In this section, we derive the necessary results for the state equation
\begin{equation} \label{eq:state}
    \left\{
        \begin{aligned}
            -\div [a(y)\nabla y + b(\nabla y)]& = u && \text{in } \Omega, \\
            y &=0 && \text{on } \partial\Omega,
        \end{aligned}
    \right.
\end{equation}
as well as its regularization that are required in \cref{sec:OS}.

\subsection{Existence, uniqueness, and regularity of solutions to the state equation}

We first address existence and uniqueness of solutions to \eqref{eq:state}. Here and in the following, we always consider weak solutions. The following proof is based on the technique from \cite[Thm.~2.2]{CasasTroltzsch2009} with some modifications.
\begin{theorem} \label{thm:existence}
    Let $p^* > N$ be arbitrary.
    Assume that \cref{ass:quasi_func1,ass:quasi_func2} hold. Then, for any $u \in W^{-1,p^*}(\Omega)$, there exists a unique solution $y_u \in H^{1}_0(\Omega) \cap C(\overline{\Omega})$ to \eqref{eq:state} satisfying the a priori estimate
    \begin{equation} \label{eq:apriori}
        \|y_u\|_{H^{1}_0(\Omega)} + \| y_u\|_{C(\overline{\Omega})} \leq C_\infty \|u\|_{W^{-1,p^*}(\Omega)}
    \end{equation}
    for some constant $C_\infty>0$ depending only on $a_0, p^*$, $N$, and $|\Omega|$.
\end{theorem}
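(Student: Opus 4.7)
The plan is to combine a fixed-point argument (to bypass the non-smooth, non-Gâteaux-differentiable dependence on $y$ of the principal coefficient $a$) with classical elliptic regularity for the resulting linearized auxiliary problems. For each fixed $v \in C(\overline\Omega)$, I would freeze the nonlinearity and consider
\begin{equation*}
-\div[a(v)\nabla y + b(\nabla y)] = u \text{ in } \Omega, \quad y = 0 \text{ on } \partial\Omega.
\end{equation*}
By \cref{ass:quasi_func1,ass:quasi_func2}, the induced operator $H^1_0(\Omega) \to H^{-1}(\Omega)$ is bounded, coercive, hemicontinuous, and strictly monotone (using $a(v) \geq a_0 > 0$ and the monotonicity of $b$), so Browder--Minty yields a unique $y = T(v) \in H^1_0(\Omega)$.

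Next I would establish a priori bounds uniform in $v$. Testing with $y$ and exploiting the monotonicity of $b$ together with the sublinear growth in \cref{ass:quasi_func2} gives $\|y\|_{H^1_0} \leq C\|u\|_{H^{-1}}$. Since $a(v) \geq a_0 > 0$ and $p^* > N$, Stampacchia's truncation method (testing with $(y-k)^+$ on super-level sets $\{y \geq k\}$ and iterating) then yields
\begin{equation*}
\|y\|_{L^\infty(\Omega)} \leq C \|u\|_{W^{-1,p^*}(\Omega)},
\end{equation*}
with $C$ depending only on $a_0, p^*, N,$ and $|\Omega|$. De~Giorgi--Nash--Moser for divergence-form equations with bounded measurable coefficient $a(v)$, together with the $C^1$-regularity of $\partial\Omega$, upgrades this to $y \in C^{0,\alpha}(\overline\Omega)$ with a uniform H\"older bound.

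These bounds show that $T$ maps the closed ball $\overline B_{C(\overline\Omega)}(0,R)$ with $R$ proportional to $\|u\|_{W^{-1,p^*}}$ into itself; it is compact by Arzel\`a--Ascoli applied to the uniform H\"older estimate, and continuous via strong $H^1_0$-convergence in the auxiliary equation (extracted from strict monotonicity of the principal part and continuity of $a$). Schauder's fixed-point theorem then produces a solution $y_u = T(y_u)$ to \eqref{eq:state} satisfying \eqref{eq:apriori}.

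The main obstacle, as one would expect, is uniqueness, since the non-smoothness of $a$ rules out any standard linearization. Given two solutions $y_1, y_2$ with $M := \max(\|y_1\|_\infty, \|y_2\|_\infty)$, setting $w := y_1 - y_2$, subtracting the state equations, testing with $w$, and decomposing
\begin{equation*}
a(y_1)\nabla y_1 - a(y_2)\nabla y_2 = a(y_1)\nabla w + (a(y_1) - a(y_2))\nabla y_2,
\end{equation*}
the monotonicity of $b$ and the local Lipschitz estimate in \cref{ass:quasi_func1} yield
\begin{equation*}
a_0 \|\nabla w\|_{L^2}^2 \leq C_M \int_\Omega |w|\,|\nabla y_2|\,|\nabla w|\,dx.
\end{equation*}
To absorb the right-hand side, the natural route is a Meyers-type higher-integrability improvement $\nabla y_i \in L^q(\Omega)$ for some $q > N$ (available because the effective coefficient $a(y_i)$ is in $L^\infty$ with ellipticity bounded below by $a_0$), combined with H\"older's inequality (with exponents $2^*, q, 2$) and the Sobolev embedding $H^1_0 \hookrightarrow L^{2^*}$; this closes the estimate and forces $\nabla w \equiv 0$, hence $w \equiv 0$.
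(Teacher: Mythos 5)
Your existence argument follows essentially the paper's route: freeze the coefficient at a candidate $v$, solve the resulting monotone problem by Browder--Minty, get the $H^1_0$ and $L^\infty$ a priori bounds (Stampacchia) plus H\"older continuity, and close with Schauder. The choice to run the fixed point in $C(\overline\Omega)$ rather than $L^2(\Omega)$, and to omit the explicit truncation $a_M$ in favor of restricting $v$ to a bounded ball, are cosmetic variations.

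The uniqueness argument, however, has a genuine gap. After subtracting, testing with $w = y_1 - y_2$, and using the local Lipschitz bound you arrive at
\begin{equation*}
  a_0 \|\nabla w\|_{L^2}^2 \;\leq\; C_M \int_\Omega |w|\,|\nabla y_2|\,|\nabla w|\,dx .
\end{equation*}
With H\"older (exponents $2^*, q, 2$, which forces $q=N$ for $N\geq 3$) and the Sobolev embedding $H^1_0\hookrightarrow L^{2^*}$, the right-hand side is bounded by $C\,C_M\,\|\nabla y_2\|_{L^q}\,\|\nabla w\|_{L^2}^2$. This does \emph{not} close: there is no smallness factor in $C\,C_M\,\|\nabla y_2\|_{L^q}$, so you cannot conclude $\nabla w\equiv 0$. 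Interpolating $\|w\|_{L^r}$ between $L^2$ and $L^{2^*}$ does not help either, since Poincar\'e then returns a term $\|\nabla w\|_{L^2}$ of the same order. Moreover, for $N\geq 3$ the Meyers improvement only gives $W^{1,q}$ with $q$ slightly above $2$, not $q\geq N$; the paper reaches $W^{1,s}$ with $s>N$ only in \cref{thm:regu}, whose proof \emph{uses} \cref{thm:existence} and hence cannot be invoked here without circularity. The key device you are missing is the truncated test function $z_\epsilon := \min\{\epsilon,(y_2-y_1)^+\}$: on its support $|y_1-y_2|\leq\epsilon$, so the Lipschitz bound yields $|a(y_1)-a(y_2)|\leq C_M\epsilon$ there, and the resulting estimate produces $|\{y_2>y_1+\epsilon\}|\leq C\,\|\nabla y_1\|^2_{L^2(K_0\setminus K_\epsilon)}\to 0$ as $\epsilon\to 0$, whence $y_1\geq y_2$ a.e.\ and, by symmetry, $y_1=y_2$. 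Without this small-difference truncation, the naive energy estimate with $w$ as test function does not yield uniqueness.
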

\begin{proof}
    To show existence, fix $M > 0$ and define the truncated coefficient
    \begin{equation*}
        a_M(y):=
        \begin{cases}
            a(M) &\text{if } y>M,\\
            a(y) &\text{if } |y|\leq M,\\
            a(-M) &\text{if } y<-M.
        \end{cases}
    \end{equation*}
    Fixing $z \in L^2(\Omega)$, we consider the nonlinear but smooth elliptic equation
    \begin{equation} \label{eq:state2}
        \left\{
            \begin{aligned}
                -\div [a_M(z)\nabla y + b(\nabla y)] &= u && \text{in } \Omega, \\
                y &=0 && \text{on } \partial\Omega.
            \end{aligned}
        \right.
    \end{equation}
    Define the mapping $F^z_M: H^1_0(\Omega) \to H^{-1}(\Omega)$ given by
    \begin{equation*}
        \left\langle F^z_M(y), w \right\rangle := \int_{\Omega} \left[ a_M(z(x)) \nabla y + b(\nabla y) - b(0) \right ]\cdot \nabla w dx, \quad y, w \in H^1_0(\Omega).
    \end{equation*}
    Due to the continuity and the monotonicity of $b$ and \cref{ass:quasi_func1}, $F^z_M$ is coercive, strongly monotone, and hemicontinuous. Since $p^* > N \geq 2$, we have $W^{-1,p^*}(\Omega) \hookrightarrow H^{-1}(\Omega)$ and so $u \in H^{-1}(\Omega)$.
    Then, \cite[Thm.~26.A]{Zeidler2B} implies that there exists a unique $y^z_M \in H^1_0(\Omega)$ which satisfies equation \eqref{eq:state2}. The strong monotonicity of $F^z_M$ thus implies that
    \begin{equation} \label{eq:apriori1}
        \|\nabla y^z_M \|_{L^2(\Omega)} \leq \frac{1}{a_0} \| u\|_{H^{-1}(\Omega)}
        \leq \frac{1}{a_0} C(p^*,N, \Omega) \| u\|_{W^{-1,p^*}(\Omega)}.
    \end{equation}
    Due to the mean value theorem, we obtain
    \begin{equation}
        b(\nabla y^z_M(x)) - b(0) = \int_0^1 J_b(t\nabla y^z_M(x)) \nabla y^z_M(x)dt \quad \text{for a.e. }x\in \Omega,
    \end{equation}
    where $J_b$ stands for the Jacobian matrix of $b$. Since $b$ is globally Lipschitz continuous, there exists a constant $L_b>0$ such that
    $ |J_b(\xi)| \leq L_b$ for all $ \xi \in \R^N$.
    Setting
    \begin{equation*}
        T^z_M(x) := \int_0^1 J_b(t\nabla y^z_M(x))dt
    \end{equation*}
    yields that $T^z_M$ is non-negative definite and
    $ |T^z_M(x)| \leq L_b$ for a.a. $x \in \Omega$.
    Due to \eqref{eq:state2}, $y^z_M$ satisfies
    \begin{equation*}
        \left\{
            \begin{aligned}
                -\div [\hat a^z_M\nabla y] & = u && \text{in } \Omega, \\
                y &=0 && \text{on } \partial\Omega,
            \end{aligned}
        \right.
    \end{equation*}
    where $\hat a^z_M(x) := a_M(z(x))\Id + T^z_M(x)$. It is easy to see for a.e. $x \in \Omega$ that
    \begin{equation*}
        a_0 |\xi|^2 \leq \hat a^z_M(x)\xi \cdot \xi \leq \left(\max\left\{a_M(t): |t| \leq M \right \} + L_b\right)|\xi|^2 \quad \text{for all} \quad \xi \in \R^N.
    \end{equation*}
    It follows from the fact $u \in W^{-1,p^*}(\Omega)$ and \cite[Thm.~3.10]{Adams} that 
    \begin{equation}
        \label{eq:Lp-repr}
        u = u_0 + \sum_{i=1}^{N} \partial_{x_i}u_i
    \end{equation}
    for some functions $u_i \in L^{p^*}(\Omega)$, $i=0,1,\dots,N$. Moreover, one has
    \begin{equation*}
        \| u \|_{W^{-1,p^*}(\Omega)} = \inf \left\{ \sum_{i=0}^{N}  \| u_i \|_{L^{p^*}(\Omega)}: u_0,\dots, u_N \ \text {satisfy \eqref{eq:Lp-repr}}   \right\}.
    \end{equation*}
    Obviously, $u_0 \in L^{q^*}(\Omega)$ with $q^* := \frac{Np^*}{N +p^*} < p^*$.
    Since $p^* > N$, the Stampacchia theorem \cite[Thm.~12.4]{Chipot2009} implies that there exists a constant $c_\infty : = c_\infty(a_0, p^*, N, |\Omega|)$ such that    
    \begin{align*} 
        \|y^z_M \|_{L^\infty(\Omega)} & \leq c_\infty \left(  \| u_0 \|_{L^{q^*}(\Omega)} + \sum_{i=1}^{N}  \| u_i \|_{L^{p^*}(\Omega)} \right) \\
        & \leq  c_\infty  \sum_{i=0}^{N}  \| u_i \|_{L^{p^*}(\Omega)}.
    \end{align*}
    As the above estimate holds for all families $\{u_i\}_{0 \leq i \leq N} \subset L^{p^*}(\Omega)$ which satisfy \eqref{eq:Lp-repr}, there holds
    \begin{equation} \label{eq:apriori2}
        \|y^z_M \|_{L^\infty(\Omega)} \leq c_\infty \| u \|_{W^{-1,p^*}(\Omega)}.
    \end{equation}
    Besides, the continuity of $y^z_M$ follows as usual; see, for instance, \cite[Thm.~8.29]{Gilbarg_Trudinger}. Combining this with estimates \eqref{eq:apriori1} and \eqref{eq:apriori2} yields
    \begin{equation} \label{eq:apriori3}
        \|y^z_M \|_{H^1_0(\Omega)} + \|y^z_M \|_{C(\overline\Omega)} \leq c(a_0, p^*, N, |\Omega|) \| u \|_{W^{-1,p^*}(\Omega)}.
    \end{equation}
    We now prove that $y^z_M$ is a solution to \eqref{eq:state} for $M$ large enough. To this end, we define the mapping $ F_M : L^2(\Omega) \ni z \mapsto y^z_M \in L^2(\Omega)$.
    Let us take $z_n \to z$ in $L^2(\Omega)$ and set $y_n := F_M(z_n)$, $y := F_M(z)$. We have
    \begin{equation}
        \left\{
            \begin{aligned}
                -\div [a_M(z_n)\nabla y_n + b(\nabla y_n)] & = u && \text{in } \Omega, \\
                y_n &=0 && \text{on } \partial\Omega
            \end{aligned}
        \right.
    \end{equation}
    and
    \begin{equation}
        \left\{
            \begin{aligned}
                -\div [a_M(z)\nabla y + b(\nabla y)] &= u && \text{in } \Omega, \\
                y & = 0 && \text{on } \partial\Omega.
            \end{aligned}
        \right.
    \end{equation}
    Subtracting these two equations yields that
    \begin{equation*}
        \left \{
            \begin{aligned}
                -\div \left [a_M(z_n)\left(\nabla y_n - \nabla y\right) + b(\nabla y_n) - b(\nabla y)\right] &= \div \left[ \left(a_M(z_n) - a_M(z) \right) \nabla y \right] && \text{in } \Omega, \\
                y_n - y &=0 && \text{on } \partial\Omega.
            \end{aligned}
        \right.
    \end{equation*}
    By multiplying the above equation with $y_n -y$, integration over $\Omega$, and then using the monotonicity of $b$, we have
    \begin{equation} \label{eq:esti}
        a_0 \| \nabla y_n - \nabla y\|_{L^2(\Omega)} \leq \| \left(a_M(z_n) - a_M(z) \right) \nabla y\|_{L^2(\Omega)}.
    \end{equation}
    By virtue of the Lebesgue dominated convergence theorem, the right hand side of \eqref{eq:esti} tends to zero as $n \to \infty$. Consequently, $y_n$ converges to $y$ in $H^1_0(\Omega)$. It follows that $F_M$ is continuous as a function from $L^2(\Omega)$ to $H^1_0(\Omega)$. On the other hand, due to the compact embedding $H^1_0(\Omega) \Subset L^2(\Omega)$, $F_M$ is a compact operator. As a result of \eqref{eq:apriori3}, the range of $F_M$ is therefore contained in a ball in $L^2(\Omega)$.
    The Schauder fixed-point theorem guarantees the existence of a function $y_M \in L^2(\Omega)$ satisfying $y_M = F_M(y_M)$.

    Choosing now $M \geq c(a_0, p, N, |\Omega|) \| u \|_{W^{-1,p^*}(\Omega)}$, it follows from \eqref{eq:apriori3} that $\| y_M \|_{C(\overline{\Omega})} \leq M$ and so $a_M(y_M(x)) = a(y_M(x))$ for all $x \in \Omega$. Therefore, $y_M$ solves \eqref{eq:state}.

    \medskip

    To show the uniqueness of the solution, assume that $y_1$ and $y_2$ are two solutions to \eqref{eq:state} in $H^1_0(\Omega) \cap C(\overline{\Omega})$. Let us define, for any $\epsilon >0$, the open sets
    \begin{equation*}
        K_0 := \left\{ x \in \Omega \mid y_2(x) > y_1(x)\right\}
        \quad \text{ and }\quad
        K_\epsilon := \left\{ x \in \Omega \mid y_2(x) > \epsilon +y_1(x)\right\}.
    \end{equation*}
    We set $z_\epsilon(x) := \min\{\epsilon, (y_2(x)-y_1(x))^+\}$, where $t^+ := \max(t,0)$. We then have $z_\epsilon \in H^{1}_0(\Omega)$, $|z_\epsilon| \leq \epsilon$, $z_\epsilon = \epsilon$ on $ K_\epsilon$, and $\nabla z_\epsilon = \1_{ K_0 \setminus K_\epsilon} \nabla (y_2 - y_1) $.

    Multiplying the equations corresponding to $y_i$ by $z_\epsilon$, integrating over $\Omega$, and using integration by parts, we have
    \begin{equation*}
        \int_{\Omega} \left[a(y_i) \nabla y_i + b(\nabla y_i) \right]\cdot \nabla z_\epsilon dx = \langle u, z_\epsilon \rangle, \quad i=1,2.
    \end{equation*}
    Subtracting these equations yields that
    \begin{equation*}
        \int_\Omega a(y_2) |\nabla z_\epsilon|^2 + (b(\nabla y_2) - b(\nabla y_1))\cdot \nabla z_\epsilon dx = \int_{\Omega} \left( a(y_1) -a(y_2) \right) \nabla y_1 \cdot \nabla z_\epsilon dx.
    \end{equation*}
    From this, the monotonicity of $b$, and \cref{ass:quasi_func1}, we obtain
    \begin{equation*}
        \begin{aligned}[t]
            a_0 \|\nabla z_\epsilon\|_{L^2(\Omega)}^2
            & \leq
            \int_\Omega a(y_2) |\nabla z_\epsilon|^2 dx + \int_{K_0 \setminus K_\epsilon} (b(\nabla y_2) - b(\nabla y_1))\cdot \nabla z_\epsilon dx \\
            & = \int_\Omega a(y_2) |\nabla z_\epsilon|^2 + (b(\nabla y_2) - b(\nabla y_1))\cdot \nabla z_\epsilon dx \\
            & = \int_{K_0 \setminus K_\epsilon} \left( a(y_1) -a(y_2) \right) \nabla y_1 \cdot \nabla z_\epsilon dx \\
            & \leq \|a(y_1)-a(y_2)\|_{L^\infty(K_0 \setminus K_\epsilon)} \|\nabla y_1\|_{L^2(K_0 \setminus K_\epsilon)}\|\nabla z_\epsilon\|_{L^2(K_0 \setminus K_\epsilon)}\\
            & \leq C_M \|y_1-y_2\|_{L^\infty(K_0 \setminus K_\epsilon)}\|\nabla y_1\|_{L^2(K_0 \setminus K_\epsilon)}\|\nabla z_\epsilon\|_{L^2(\Omega)} \\
            & \leq C_M \epsilon \|\nabla y_1\|_{L^2(K_0 \setminus K_\epsilon)}\|\nabla z_\epsilon\|_{L^2(\Omega)}.
        \end{aligned}
    \end{equation*}
    Here $M := \max\{ |y_i(x)| \mid x \in \overline \Omega, i =1,2 \}$. Combining this with the Poincar\'{e} inequality, we have
    \begin{equation*}
        \| z_\epsilon\|_{L^2(\Omega)} \leq C \epsilon \|\nabla y_1\|_{L^2(K_0 \setminus K_\epsilon)}
    \end{equation*}
    for some constant $C$. Since $\lim_{\epsilon \to 0} |K_0 \setminus K_\epsilon| = 0$ and $z_\epsilon = \epsilon$ on $K_\epsilon$,
    \begin{equation*}
        |K_\epsilon| = \epsilon^{-2}\int_{K_\epsilon} z_\epsilon^2dx \leq C^2\|\nabla y_1\|_{L^2(K_0 \setminus K_\epsilon)}^2 \to 0
    \end{equation*}
    as $\epsilon \to 0$. This implies that $|K_0| = \lim_{\epsilon \to 0}|K_\epsilon| = 0$. Consequently, $y_1 \geq y_2$ a.e. in $\Omega$.

    In the same way, we have $y_2 \geq y_1$ a.e. in $\Omega$ and hence $y_1 = y_2$.
\end{proof}

From now on, for each $u \in W^{-1,p^*}(\Omega)$, $p^* >N$, we denote by $y_u$ the unique solution to \eqref{eq:state}. The control-to-state operator $ W^{-1,p^*}(\Omega) \ni u \mapsto y_u \in H^1_0(\Omega)$ is denoted by $S$.

The following theorem on the regularity of solutions to equation \eqref{eq:state} will be crucial in proving the directional differentiability of the control-to-state operator $S$.
\begin{theorem} \label{thm:regu}
    Assume that \cref{ass:quasi_func1,ass:quasi_func2} are valid. Let $U$ be a bounded set in $W^{-1,p*}(\Omega)$ with $p^*>N$. Then, there exists a constant $s := s(a_0, \Omega,N,p^*, U) > N$ such that the following assertions hold:
    \begin{itemize}
        \item[(i)] If $u \in U$, then $y_u \in W^{1,s}_0(\Omega)$ and
            \begin{equation} \label{eq:regu}
                \|y_u\|_{W^{1,s}_0{(\Omega)}} \leq C_1
            \end{equation}
            for some constant $C_1>0$ depending only on $a_0, \Omega, N, p^*$, and $U$.
        \item[(ii)] If $u_n \to u$ in $W^{-1,p^*}(\Omega)$, then $y_{u_n} \to y_u$ in $W^{1,r}_0(\Omega) \cap C(\overline{\Omega})$ for all $1 \leq r < s$.
    \end{itemize}
\end{theorem}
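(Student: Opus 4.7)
The plan is to recast \eqref{eq:state} as a linear divergence-form equation with bounded, uniformly elliptic coefficients, apply a higher-integrability estimate to produce $y_u\in W^{1,s}_0(\Omega)$ with $s>N$ uniformly for $u\in U$, and then deduce (ii) from a compactness/monotonicity argument combined with interpolation.

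For (i), I start from the representation used in \cref{thm:existence}: setting
\[
   \hat a_u(x) := a(y_u(x))\Id + \int_0^1 J_b(t\nabla y_u(x))\,dt,
\]
the state equation reads $-\div(\hat a_u\nabla y_u) = u$ with $\hat a_u$ uniformly elliptic ($\geq a_0\Id$) and uniformly bounded in $L^\infty(\Omega)$, thanks to the apriori estimate \eqref{eq:apriori} which yields $\|y_u\|_{C(\overline\Omega)}\leq M_U$ for all $u\in U$. A Meyers/Gr\"oger-type higher integrability theorem on $C^1$-domains then provides some $s_0>2$, uniform in $u\in U$, for which $y_u\in W^{1,s_0}_0(\Omega)$ with a uniform bound. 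To push the exponent above $N$ when $N\geq 3$, I move the nonlinear gradient term to the right-hand side, rewriting
\[
   -\div\bigl(a(y_u)\nabla y_u\bigr) = u + \div\bigl(b(\nabla y_u)\bigr).
\]
The scalar leading coefficient $a\circ y_u$ is continuous on $\overline\Omega$ (since $y_u\in C(\overline\Omega)$ and $a$ is locally Lipschitz), so classical $L^p$-elliptic regularity for equations with continuous leading coefficient on $C^1$-domains applies for every $1<p<\infty$. Because $\sigma<1$ in \cref{ass:quasi_func2}, one has $b(\nabla y_u)\in L^{r/\sigma}(\Omega)$ whenever $\nabla y_u\in L^r(\Omega)$ with $r/\sigma>r$; iterating the linear $L^p$-estimate then raises the integrability of $\nabla y_u$ and terminates after finitely many steps at some $s>N$, yielding \eqref{eq:regu}.

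For (ii), applying (i) with $U:=\{u_n\}_{n\in\N}\cup\{u\}$ produces a uniform $W^{1,s}_0$-bound on $\{y_{u_n}\}$ for some $s>N$. Reflexivity and the compact embedding $W^{1,s}_0(\Omega)\Subset C(\overline\Omega)$ give a subsequence (not relabeled) with $y_{u_n}\weakto \tilde y$ in $W^{1,s}_0(\Omega)$ and $y_{u_n}\to \tilde y$ uniformly on $\overline\Omega$. The terms $a(y_{u_n})\nabla y_{u_n}$ and $b(\nabla y_{u_n})$ pass to the limit in the weak formulation (the latter via a Minty-type argument exploiting monotonicity of $b$), so $\tilde y$ solves \eqref{eq:state}; uniqueness in \cref{thm:existence} identifies $\tilde y = y_u$, and the usual subsequence principle gives convergence along the full sequence. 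Strong convergence in $H^1_0(\Omega)$ is obtained by subtracting the equations for $y_{u_n}$ and $y_u$, testing against $y_{u_n}-y_u$, and combining monotonicity of $b$ with the uniform convergence $a(y_{u_n})\to a(y_u)$ on $\overline\Omega$. Finally, the interpolation estimate
\[
   \|y_{u_n}-y_u\|_{W^{1,r}_0}\leq C\|y_{u_n}-y_u\|_{H^1_0}^{\theta}\|y_{u_n}-y_u\|_{W^{1,s}_0}^{1-\theta}, \qquad \theta=\theta(r,s)\in(0,1],
\]
combined with the uniform $W^{1,s}$-bound produces strong $W^{1,r}_0$-convergence for every $r\in[1,s)$ (the case $r<2$ is handled by H\"older on the bounded domain).

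The hardest step is attaining $s>N$ in (i): the Meyers/Gr\"oger baseline only guarantees some $s_0>2$, which can lie below $N$ when $N\geq 3$. Crossing this threshold forces us to exploit the continuity of $a\circ y_u$—available precisely because $y_u\in C(\overline\Omega)$ and $a$ is locally Lipschitz—to upgrade to full $L^p$-regularity on the $C^1$-domain, and then to chain it with the sublinear growth of $b$ so that the bootstrap terminates above $N$ in finitely many iterations. Everything in (ii) is then a largely standard compactness/monotonicity routine made possible by the compact embedding $W^{1,s}_0(\Omega)\Subset C(\overline\Omega)$.
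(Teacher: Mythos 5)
Your proposal is correct and reaches the same conclusions, but by two genuine deviations from the paper's argument.

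In part (i), after the Meyers/Gr\"oger baseline, the paper does \emph{not} invoke $L^p$-regularity for variable-coefficient divergence-form operators. Instead it uses the Kirchhoff transformation $K(t):=\int_0^t a(\varsigma)\,d\varsigma$, $\theta:=K(y_u)$, which (since $\nabla K(y_u)=a(y_u)\nabla y_u$) converts $-\div[a(y_u)\nabla y_u]=u+\div\,b(\nabla y_u)$ into the \emph{constant-coefficient} equation $-\Delta\theta=u+\div\,b(\nabla y_u)$, so that classical $W^{1,q}$-regularity for the Laplacian (Morrey) applies directly; the bound on $\nabla y_u=a(K^{-1}(\theta))^{-1}\nabla\theta$ then follows from $a\geq a_0$. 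You keep the variable scalar coefficient $a\circ y_u$ in place and appeal to $W^{1,p}$-regularity for divergence-form operators with continuous leading coefficient on $C^1$-domains. This is a valid alternative (such estimates hold for continuous, indeed VMO, coefficients), but it invokes heavier elliptic machinery than the paper needs; the Kirchhoff trick is precisely a device that reduces to the Laplacian and sidesteps that theory, and it is available here exactly because the coefficient is a scalar function of $y_u$. Both routes then produce the same geometric bootstrap $\tau\mapsto\min\{p^*,\tau/\sigma\}$, which terminates above $N$ after finitely many steps.

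In part (ii), both you and the paper first pass to the limit in the weak formulation using the maximal monotonicity of $z\mapsto-\div[a_0/2\,\nabla z+b(\nabla z)]$ (Minty), then deduce strong $H^1_0$-convergence by testing the difference of equations. The difference is the final upgrade to $W^{1,r}_0$, $r<s$: the paper combines convergence in measure of $\nabla y_n$ with a Vitali-type proposition from Visintin, whereas you use Lebesgue-space interpolation $\|\nabla v\|_{L^r}\leq\|\nabla v\|_{L^2}^{\theta}\|\nabla v\|_{L^s}^{1-\theta}$ for $2<r<s$ (H\"older on the bounded domain for $r\leq 2$). These are essentially equivalent in strength, but your interpolation argument is shorter and self-contained, avoiding the external citation. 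Both correctly obtain the $C(\overline\Omega)$-convergence from $W^{1,s}_0(\Omega)\Subset C(\overline\Omega)$ with $s>N$.
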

\begin{proof}
    \emph{Ad (i):}
    Let $U$ be a bounded subset in $W^{-1,p^*}(\Omega)$. Due to the a priori estimate \eqref{eq:apriori}, there is a constant $M := M(a_0, p^*, N, \Omega,U)$ such that
    \begin{equation*}
        \|y_u \|_{C(\overline\Omega)} \leq M \quad \text{for all} \quad u \in U.
    \end{equation*}
    Fixing $u \in U$ and using the mean value theorem, we have
    \begin{equation*}
        b(\nabla y_u(x)) - b(0) = \left(\int_0^1 J_b(t\nabla y_u(x)) dt \right)\cdot \nabla y_u(x) \quad \text{for a.e. }x\in \Omega,
    \end{equation*}
    where $J_b$ again denotes the Jacobian matrix of $b$.
    Setting
    $ T(x) := \int_0^1 J_b(t\nabla y_u(x))dt$,
    we see that $T(x)$ is non-negative definite and
    $|T(x)| \leq L_b$ for a.a. $x \in \Omega$ with some constant $L_b$. Then, $y_u$ satisfies
    \begin{equation} \label{eq:regu1}
        \left\{
            \begin{aligned}
                -\div [\hat a(x) \nabla y_u] &= u && \text{in } \Omega, \\
                y_u & =0 && \text{on } \partial\Omega
            \end{aligned}
        \right.
    \end{equation}
    with $\hat a(x) := a(y_u(x))\Id + T(x)$. As above, we see that
    \begin{equation*}
        a_0 |\xi|^2 \leq \hat a(x) \xi \cdot \xi \leq \hat M |\xi|^2 \quad \text{for all } \xi \in \R^N \ \text{and for a.e. } x \in \Omega
    \end{equation*}
    with $\hat M:= C_MM + |a(0)| + L_b$.
    The regularity of solutions to \eqref{eq:regu1} (see, e.g.
    \cite[Thm.~2.1]{Bensoussan2002}) implies that there exists a constant $\delta :=\delta(a_0, \hat M, \Omega, N, p) > 0$ such that $y_u \in W^{1,\tau}_0(\Omega)$ for
    any $2 \leq \tau < 2 + \delta$. Moreover, it holds that
    \begin{equation} \label{eq:esti0}
        \|y_u \|_{W^{1,\tau}_0(\Omega)} \leq c_\tau \| u\|_{W^{-1,p^*}(\Omega)}
    \end{equation} for all $2 \leq \tau < 2 + \delta$ and for some constant $c_\tau$ depending only on $\tau$. For $N=2$, assertion (i) then follows from the Sobolev embedding.

    It remains to prove assertion (i) for the case where $N\geq 3$. For this, we rewrite equation \eqref{eq:state} in the form
    \begin{equation} \label{eq:regu2}
        \left\{
            \begin{aligned}
                -\div [a(y_u(x)) \nabla y_u] & = u + \hat u && \text{in } \Omega, \\
                y_u & =0 && \text{on } \partial\Omega,
            \end{aligned}
        \right.
    \end{equation}
    where $\hat u := \div[b(\nabla y_u)]$. We now use the Kirchhoff transformation
    $K(t) := \int_0^t a(\varsigma)d\varsigma$ (see \cite[Chap.~V]{Visintin}).
    By setting $\theta(x) := K(y_u(x))$ for $x \in \overline\Omega$, \eqref{eq:regu2} can be rewritten as follows
    \begin{equation} \label{eq:regu3}
        \left\{
            \begin{aligned}
                -\Delta \theta &= u + \hat u && \text{in } \Omega, \\
                \theta &=0 && \text{on } \partial\Omega.
            \end{aligned}
        \right.
    \end{equation}
    We then have
    \begin{equation} \label{eq:esti1}
        \|\theta\|_{L^1(\Omega)} \leq C(\Omega) \|\theta\|_{L^2(\Omega)} \leq C(\Omega) \|\nabla \theta\|_{L^2(\Omega)} \leq C(\Omega)\| u + \hat u\|_{H^{-1}(\Omega)}.
    \end{equation}
    Fixing $\tau \in (2, 2+\delta)$, we see from \eqref{eq:esti0} that $\nabla y_u \in \left( L^\tau(\Omega) \right)^N$. From this and \cref{ass:quasi_func2}, we can conclude that $b(\nabla y_u) \in \left( L^{\tau_1}(\Omega) \right)^N$, $\tau_1 := \tau/\sigma$ and
    \begin{equation*}
        |b(\nabla y_u(x))|^{\tau_1} \leq C_b^{\tau_1}2^{\tau_1-1}\left(1 + |\nabla y_u(x)|^{\tau} \right) \quad \text{for a.e. } x \in \Omega.
    \end{equation*}
    Consequently, we have $\hat u \in W^{-1,\tau_1}(\Omega)$ and
    \begin{equation} \label{eq:esti3}
        \begin{aligned}[t]
            \|\hat u\|_{W^{-1,\tau_1}(\Omega)} & 	\leq C \|b(\nabla y_u)\|_{L^{\tau_1}(\Omega)} \\
            & \leq C(\Omega, \tau)\left(1 + \|\nabla y_u\|^\tau_{L^{\tau}(\Omega)} \right)^{\sigma/\tau} \\
            & \leq C(\Omega, \tau)\left(1 + \| u\|_{W^{-1,p^*}(\Omega)}^\tau \right)^{\sigma/\tau} \\
            & \leq C(\Omega, \tau)\left(1 + \|u\|^\sigma_{W^{-1,p^*}(\Omega)} \right).
        \end{aligned}
    \end{equation}
    Here we have used the estimate \eqref{eq:esti0} and the inequality $(r_1+r_2)^{d} \leq r_1^d + r_2^d$ for $r_1,r_2 \geq 0$, $0 < d<1$.
    Consequently, $u + \hat u \in W^{-1,q}(\Omega)$ with $q := \min\{p^*,\tau_1\}$.
    We now apply \cite[Thms.~5.5.4' and 5.5.5']{Morrey1966} for problem \eqref{eq:regu3} to obtain $\theta \in W^{1,q}_0(\Omega)$ and
    \begin{equation*}
        \|\theta\|_{W^{1,q}_0(\Omega)} \leq C(N,q, \Omega) \left( \| u + \hat u\|_{W^{-1,q}(\Omega)} + \|\theta\|_{L^1(\Omega)} \right).
    \end{equation*}
    Combining this with \eqref{eq:esti1} yields
    \begin{equation*}
        \begin{aligned}
            \|\theta\|_{W^{1,q}_0(\Omega)} & \leq C(N,q, \Omega)\left( \| u + \hat u\|_{W^{-1,q}(\Omega)} + \|u + \hat u\|_{H^{-1}(\Omega)} \right)\\
            & \leq C(N,p^*,\tau, \Omega) \left( \| u \|_{W^{-1,p^*}(\Omega)} + \|\hat u\|_{W^{-1,\tau_1}(\Omega)} \right),
        \end{aligned}
    \end{equation*}
    which together with \eqref{eq:esti3} gives
    \begin{equation*}
        \|\theta\|_{W^{1,q}_0(\Omega)} \leq C(N,p^*,\tau, \Omega) \left(1+ \| u \|_{W^{-1,p^*}(\Omega)} + \| u \|_{W^{-1,p^*}(\Omega)}^{\sigma} \right)	.
    \end{equation*}
    Since $y_u = K^{-1}(\theta)$, we obtain $\nabla y_u = \frac{1}{a(K^{-1}(\theta))} \nabla \theta$. We then have
    \begin{equation*}
        \begin{aligned}[t]
            \|\nabla y_u\|_{L^{q}(\Omega)} & \leq \frac{1}{a_0}\|\nabla \theta\|_{L^{q}(\Omega)} \\
            &\leq C(a_0, \Omega, N,p^*,U) \left(1+ \| u \|_{W^{-1,p^*}(\Omega)} + \| u \|_{W^{-1,p^*}(\Omega)}^{\sigma} \right).
        \end{aligned}
    \end{equation*}
    We now distinguish the following cases.
    \begin{enumerate}[label={Case }\arabic*:, align=left]
        \item $q>N$. By setting $s:= q$, we obtain \eqref{eq:regu}.

        \item$q \leq N$. In this case, one has $q = \tau_1 < p^*$. By the same arguments as above, we can deduce that $\nabla y_u \in L^{q_2}(\Omega)$ with
            $q_2 := \min\{ \tau_2, p^* \}$ for $\tau_2 := \frac{\tau_1}{\sigma} = \frac{\tau}{\sigma^2}$ and
            \begin{equation*} \label{eq:regu5}
                \|\nabla y_u\|_{L^{q_2}(\Omega)} \leq C(a_0, \Omega, N,p^*,U) \left(1+ \| u \|_{W^{-1,p^*}(\Omega)} + \| u \|_{W^{-1,p^*}(\Omega)}^{\sigma} + \| u \|_{W^{-1,p^*}(\Omega)}^{2\sigma} \right).
            \end{equation*}
            We now choose the smallest integer $k \geq 2$ such that $\tau_k \geq p^*$ with $\tau_k := \frac{\tau}{\sigma^k}$.
            Proceeding by induction, we obtain $\nabla y_u \in L^{q_k}(\Omega)$ with $q_k := \min \{\tau_k, p^* \} = p^*$ and
            \begin{equation*}
                \|\nabla y_u\|_{L^{p^*}(\Omega)} \leq C(a_0, \Omega, N,p^*,U) \left(\| u \|_{W^{-1,p^*}(\Omega)} +\sum_{i=0}^k \| u \|_{W^{-1,p^*}(\Omega)}^{i\sigma} \right).
            \end{equation*}
            In this case, we set $s := p^*$ and then obtain estimate \eqref{eq:regu}.
    \end{enumerate}

    \medskip

    \emph{Ad (ii):}
    Assume that $u_n \to u$ in $W^{-1,p^*}(\Omega)$. We set $y_n := S(u_n)$. From the convergence of $\{u_n\}$ and the a priori estimate \eqref{eq:regu}, we deduce that $\{y_n\}$ is bounded in $W^{1,s}_0(\Omega)$. By passing to a subsequence, we can assume that $y_n \rightharpoonup y$ in $W^{1,s}_0(\Omega)$ and $y_n \to y$ in $C(\overline{\Omega})$ as $n \to \infty$ for some $y \in W^{1,s}_0(\Omega)$. Setting $M:= \max\{\|y\|_{C(\overline{\Omega})}, \|y_n\|_{C(\overline{\Omega})}\}$, we obtain from \cref{ass:quasi_func1} that
    \begin{equation*}
        \|a(y_n)- a(y)\|_{C(\overline\Omega)} \leq C_M \|y_n - y\|_{C(\overline{\Omega})} \to 0 \quad \text{as} \quad n \to \infty
    \end{equation*}
    for some constant $C_M > 0$.
    For $n \geq 1$, we have
    \begin{equation*}
        \left\{
            \begin{aligned}
                -\div \left [(a(y_n) - a_0 /2) \nabla y_n \right] + B( y_n) & = u_n && \text{in } \Omega, \\
                y_n & =0 && \text{on } \partial\Omega
            \end{aligned}
        \right.
    \end{equation*}
    with $B(z):= -\div [ a_0/2 \nabla z + b(\nabla z)]$. The global Lipschitz continuity of $b$ and the boundedness of $\{y_n\}$ in $H^1_0(\Omega)$ ensure the boundedness of $\{B(y_n)\}$ in $H^{-1}(\Omega)$. There exists a subsequence of $\{B(y_n)\}$, denoted in the same way, such that $B(y_n) \rightharpoonup \psi$ in $H^{-1}(\Omega)$.
    Letting $n \to \infty$ yields
    \begin{equation*}
        \left\{
            \begin{aligned}
                -\div \left [(a(y) - a_0 /2) \nabla y \right] + \psi &= u && \text{in } \Omega, \\
                y &=0 && \text{on } \partial\Omega.
            \end{aligned}
        \right.
    \end{equation*}
    We have
    \begin{multline*}
        \liminf_{n \to \infty} \left\langle B(y_n), y_n \right\rangle \\
        \begin{aligned}
            & = \liminf_{n \to \infty} \left\langle u_n +\div \left [(a(y_n) - a_0 /2) \nabla y_n \right] , y_n \right\rangle \\
            & =\liminf_{n \to \infty} \left\langle u_n, y_n \right\rangle - \limsup_{n \to \infty} \int_{\Omega} (a(y_n)-a_0/2)|\nabla y_n|^2dx \\
            & \leq \left\langle u, y \right\rangle - \liminf_{n \to \infty} \int_{\Omega} (a(y_n)-a_0/2)|\nabla y_n|^2dx \\
            & \leq \left\langle u, y \right\rangle - \liminf_{n \to \infty} \int_{\Omega} (a(y)-a_0/2)|\nabla y_n|^2dx + \lim_{n \to \infty} \| a(y_n) -a(y) \|_{C(\overline\Omega)} \| \nabla y_n\|^2_{L^2(\Omega)} \\
            & \leq \left\langle u, y \right\rangle - \int_{\Omega} (a(y)-a_0/2)|\nabla y|^2dx \\
            & = \left\langle \psi, y \right\rangle.
        \end{aligned}
    \end{multline*}
    Due to \cref{lem:mono}, $B$ is maximally monotone as an operator from $H^1_0(\Omega)$ to $H^{-1}(\Omega)$. From the strong-to-weak closedness of maximally monotone operators (see, e.g., \cite[Lemma 5.1, Chap-~XI]{Visintin}), we obtain that $\psi = B(y)$.

    Consequently, $y_n$ and $y$ satisfy the equation
    \begin{equation*}
        \left \{
            \begin{aligned}
                -\div \left [a(y_n)\left(\nabla y_n - \nabla y\right) + b(\nabla y_n) - b(\nabla y)\right] &= u_n - u + \div \left[ \left(a(y_n) - a(y) \right) \nabla y \right] && \text{in } \Omega, \\
                y_n - y &=0 && \text{on } \partial\Omega.
            \end{aligned}
        \right.
    \end{equation*}
    Multiplying the above equation by $(y_n -y)$, integration over $\Omega$, and using \cref{ass:quasi_func1,ass:quasi_func2} gives
    \begin{equation} \label{eq:conv}
        a_0 \| \nabla ( y_n - y )\|^2_{L^2(\Omega)} \leq \left \langle u_n -u, y_n - y \right \rangle - \int_{\Omega} \left(a(y_n) - a(y) \right) \nabla y \cdot (\nabla y_n - \nabla y)dx.
    \end{equation}
    Since the embedding $W^{-1,p^*}(\Omega) \hookrightarrow H^{-1}(\Omega)$ is continuous, it follows that $u_n \to u$ in $H^{-1}(\Omega)$. Consequently, the right hand side of \eqref{eq:conv} tends to zero. We then have $y_n \to y$ strongly in $H^1_0(\Omega)$. Therefore, $\nabla y_n \to \nabla y$ in measure. The convergence of $\nabla y_n$ to $\nabla y$ in $\left(L^r(\Omega) \right)^N$, $1 \leq r < s$, follows from \cite[Chap.~XI, Prop.~3.10]{Visintin}. This and the uniqueness of solutions to \eqref{eq:state} yield assertion (ii).
\end{proof}

As a direct consequence of \cref{thm:regu}, we have
\begin{corollary} \label{cor:continuity}
    Let $p^*>N$ be arbitrary. Assume that \cref{ass:quasi_func1,ass:quasi_func2} are satisfied. Then, the operator $S: W^{-1,p^*}(\Omega) \to H^1_0(\Omega) \cap C(\overline{\Omega})$ is continuous.
\end{corollary}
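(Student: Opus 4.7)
The plan is to reduce this directly to \cref{thm:regu}(ii), since the corollary is essentially its sequential reformulation. Let $u \in W^{-1,p^*}(\Omega)$ be arbitrary and suppose $u_n \to u$ in $W^{-1,p^*}(\Omega)$. Form the bounded set $U := \{u_n \mid n \in \N\} \cup \{u\} \subset W^{-1,p^*}(\Omega)$ and invoke \cref{thm:regu} for this $U$ to obtain the exponent $s = s(a_0, \Omega, N, p^*, U) > N$.

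Since $N \geq 2$, we have $s > 2$, so $r = 2$ is admissible in part (ii) of \cref{thm:regu}. Applying that part yields $y_{u_n} \to y_u$ in $W^{1,2}_0(\Omega) \cap C(\overline{\Omega}) = H^1_0(\Omega) \cap C(\overline{\Omega})$, which is precisely the continuity of $S: W^{-1,p^*}(\Omega) \to H^1_0(\Omega) \cap C(\overline{\Omega})$ at $u$. Since $u$ was arbitrary, continuity follows globally.

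There is no real obstacle here beyond checking that the target space of \cref{thm:regu}(ii) includes $H^1_0(\Omega)$ for some admissible $r$; this is guaranteed by $s > N \geq 2$. The only mildly delicate point is that the exponent $s$ depends on the bounded set $U$, but this dependence is harmless for a continuity statement at a single point, since we only need the existence of such an $s$ (together with $s > 2$) to extract the $H^1_0$-convergence from the $W^{1,r}_0$-convergence.
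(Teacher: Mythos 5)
Your proof is correct and is essentially the argument the paper has in mind: the corollary is stated as a direct consequence of \cref{thm:regu}, and you fill in the routine steps (collecting a convergent sequence into a bounded set $U$, noting $s>N\geq 2$ so $r=2$ is admissible, and identifying $W^{1,2}_0(\Omega)=H^1_0(\Omega)$).
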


Since $p > N/2$, we can choose a number $\tilde{p} > N$ such that
\begin{equation} \label{eq:p_star}
    \frac{1}{p} < \frac{1}{\tilde p} + \frac{1}{N}.
\end{equation}
This implies that the embedding $L^p(\Omega) \Subset W^{-1,\tilde p}(\Omega)$ is compact.

Let $\bar u \in L^p(\Omega)$ be arbitrary, but fixed and $\bar \rho >0$ be a constant. From now on, we fix
\begin{equation} \label{eq:U_set}
    U := B_{L^p(\Omega)}(\bar u, 2 \bar \rho) \quad \text{and} \quad \bar s \in (N, s)
\end{equation}
with $s$ as given in \cref{thm:regu} corresponding to $p^* := \tilde{p}$ and $U$. Let us define constants $s_1$ and $s_2$ such that
\begin{equation} \label{eq:s1}
    s_1
    \begin{cases}
        > 2 \quad & \text{if } N=2,\\
        \in \left(\frac{2\bar s}{\bar s-2}, \frac{2N}{N-2}\right) & \text{if } N\geq 3,
    \end{cases} 
    \qquad \text{and} \qquad \frac{1}{\bar s} + \frac{1}{s_1} + \frac{1}{s_2} = \frac{1}{2}.
\end{equation}
Note that the embedding $H^1_0(\Omega) \Subset L^{s_1}(\Omega)$ is compact. The following property of $S$ is a direct consequence of \cref{thm:regu} and the compact embedding $L^p(\Omega) \Subset W^{-1, \tilde p}(\Omega)$.
\begin{corollary} \label{cor:continuity2}
    Assume that \cref{ass:quasi_func1,ass:quasi_func2} are satisfied. Then, the operator $S: U \to W^{1,\bar s}_0(\Omega)$ is continuous and completely continuous, i.e., $u_n\weakto u$ implies that $S(u_n)\to S(u)$.
\end{corollary}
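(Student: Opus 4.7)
The plan is to reduce both claims to \cref{thm:regu}(ii) by using the embedding chain $L^p(\Omega) \Subset W^{-1,\tilde p}(\Omega) \hookrightarrow W^{-1,\tilde p}(\Omega)$, where the compactness is guaranteed by the choice of $\tilde p>N$ satisfying \eqref{eq:p_star}. Since $\bar s \in (N,s)$ with the $s$ provided by \cref{thm:regu} for $p^* := \tilde p$ and the bounded set $U$, the target space $W^{1,\bar s}_0(\Omega)$ is admissible as a space of convergence in \cref{thm:regu}(ii).

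First, for plain continuity, I would take a sequence $u_n \to u$ in $L^p(\Omega)$ with $u_n, u \in U$. The continuous embedding $L^p(\Omega) \hookrightarrow W^{-1,\tilde p}(\Omega)$ implies $u_n \to u$ in $W^{-1,\tilde p}(\Omega)$. Applying \cref{thm:regu}(ii) with $r := \bar s < s$ yields $S(u_n) \to S(u)$ in $W^{1,\bar s}_0(\Omega)$, which is exactly the continuity claim.

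Next, for complete continuity, I would assume $u_n \weakto u$ in $L^p(\Omega)$ (with $u_n \in U$, and necessarily $u \in \overline U$ since $U$ is a ball and hence weakly closed). By the compactness of $L^p(\Omega) \Subset W^{-1,\tilde p}(\Omega)$, this weak convergence upgrades to strong convergence $u_n \to u$ in $W^{-1,\tilde p}(\Omega)$. Invoking \cref{thm:regu}(ii) once more with $r := \bar s$ gives $S(u_n) \to S(u)$ in $W^{1,\bar s}_0(\Omega)$, establishing complete continuity. There is no real obstacle here beyond verifying the embedding chain and the choice of $\bar s$, both of which are already in place above the statement.
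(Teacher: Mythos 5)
Your argument is correct and is precisely the one the paper intends: the corollary is stated as a direct consequence of \cref{thm:regu} and the compact embedding $L^p(\Omega)\Subset W^{-1,\tilde p}(\Omega)$, and you have filled in exactly those steps (strong convergence for continuity, weak-to-strong upgrading via compactness for complete continuity, then \cref{thm:regu}(ii) with $r=\bar s<s$). The only blemish is the typo in your first line, where the ``embedding chain'' repeats $W^{-1,\tilde p}(\Omega)\hookrightarrow W^{-1,\tilde p}(\Omega)$; only the single compact embedding $L^p(\Omega)\Subset W^{-1,\tilde p}(\Omega)$ is needed.
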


\subsection{Directional differentiability of the control-to-state operator}

In order to derive stationarity conditions for problem \eqref{eq:P}, we require directional differentiability of the control-to-state operator $S$.
We thus consider for given $y \in H^1_0{(\Omega)}$ the ``linearized'' equation
\begin{equation} \label{eq:dire_deri}
    \left\{
        \begin{aligned}
            -\div \left [\left(a(y)\Id + J_b(\nabla y) \right) \nabla z + a'(y; z) \nabla y \right] & = v && \text{in } \Omega, \\
            z &=0 && \text{on } \partial\Omega.
        \end{aligned}
    \right.
\end{equation}
We begin with a technical lemma regarding the directional derivatives of the nonsmooth nonlinearity $a$.
\begin{lemma}
    \label{lem:Lip-Hadamard}
    Let $M >0$ and $y \in C(\overline\Omega)$ such that $|y(x)| \leq M$ for all $x \in \overline\Omega$. Under \cref{ass:quasi_func1}, there hold:
    \begin{itemize}
        \item[(i)] For all $x \in \overline\Omega$ and $h \in \R$,
            \begin{equation*}
                \lim\limits_{\substack{t \to 0^+ \\ h' \to h }}\frac{a(y(x) + th')- a(y(x))}{t} = a'(y(x);h);
            \end{equation*}
        \item[(ii)] For all $x \in \overline\Omega$, the mapping $\R \ni \eta \mapsto a'(y(x);\eta) \in \R$ is Lipschitz continuous with Lipschitz constant $C_{2M}$ as given in \cref{ass:quasi_func1}.
    \end{itemize}
\end{lemma}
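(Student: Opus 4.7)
The plan is to handle part (ii) first, since the Lipschitz estimate on $\eta \mapsto a'(y(x);\eta)$ is the ingredient that makes part (i) (the Hadamard-type limit) essentially mechanical.

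For (ii), I would use the limit definition of the directional derivative directly. Fix $x \in \overline\Omega$ and $\eta_1, \eta_2 \in \R$, and choose $t_0>0$ so small that $|y(x)| + t_0\max\{|\eta_1|,|\eta_2|\} \leq 2M$. Then for every $t \in (0,t_0]$, both arguments $y(x)+t\eta_i$ lie in $[-2M, 2M]$, so the local Lipschitz estimate from \cref{ass:quasi_func1} gives
\begin{equation*}
    \left|\frac{a(y(x)+t\eta_1) - a(y(x))}{t} - \frac{a(y(x)+t\eta_2) - a(y(x))}{t}\right| \leq \frac{C_{2M}\,t\,|\eta_1 - \eta_2|}{t} = C_{2M}|\eta_1-\eta_2|.
\end{equation*}
Passing to the limit $t \to 0^+$ inherits the bound, yielding $|a'(y(x);\eta_1) - a'(y(x);\eta_2)| \leq C_{2M}|\eta_1-\eta_2|$.

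For (i), I would split the difference quotient and use the Lipschitz bound from (ii) together with the scalar local Lipschitz continuity of $a$. Writing
\begin{equation*}
    \frac{a(y(x)+th')-a(y(x))}{t} - a'(y(x);h) = \frac{a(y(x)+th') - a(y(x)+th)}{t} + \left(\frac{a(y(x)+th)-a(y(x))}{t} - a'(y(x);h)\right),
\end{equation*}
the second bracket tends to $0$ as $t \to 0^+$ by the very definition of $a'(y(x);h)$. For the first term, for $(t,h')$ close enough to $(0^+,h)$ both $y(x)+th'$ and $y(x)+th$ lie in $[-2M, 2M]$, so \cref{ass:quasi_func1} yields
\begin{equation*}
    \left|\frac{a(y(x)+th')-a(y(x)+th)}{t}\right| \leq C_{2M}|h'-h|,
\end{equation*}
which tends to $0$ as $h' \to h$. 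Combining both estimates gives the Hadamard limit.

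I do not anticipate a genuine obstacle here: the argument is the classical folklore fact that local Lipschitz continuity plus directional differentiability implies Hadamard directional differentiability. The only mild care needed is to choose the neighborhood of $(0^+, h)$ uniformly in $x$, which is guaranteed by the uniform pointwise bound $|y(x)| \leq M$ on $\overline\Omega$, so that the constant $C_{2M}$ from \cref{ass:quasi_func1} can be used throughout.
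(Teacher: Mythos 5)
Your proof is correct. It differs from the paper's in that the paper simply invokes \cite[Prop.~2.49]{Bonnans_Shapiro} (after restricting $a$ to $(-2M,2M)$ so that it becomes globally Lipschitz there) to obtain both the Hadamard directional differentiability and the Lipschitz continuity of $a'(y(x);\cdot)$ in one stroke, whereas you reprove that proposition from first principles: part (ii) by passing the local Lipschitz estimate through the difference quotient, and part (i) by the standard decomposition of the quotient into a Lipschitz piece and a piece controlled by the definition of the directional derivative. Your argument is thus more self-contained and elementary; the paper's is shorter by delegation to a textbook result. One small point worth making explicit in your (i): after fixing $h$, you need $t$ and $h'$ to range over a set where $|y(x)+th|$ and $|y(x)+th'|$ both stay $\leq 2M$, and that constraint depends on $h$; this is fine because the limit is taken at fixed $h$, but it is worth stating the shrinking neighborhood explicitly (e.g.\ $0<t\leq t_0$ and $|h'-h|\leq 1$ with $t_0(|h|+1)\leq M$) to make the use of $C_{2M}$ airtight.
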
	
\begin{proof}
    Let us fix $x \in \overline\Omega$ and define the function $a_M: (-2M,2M) \to \R$ given by $a_M(t) = a(t)$ for all $t \in (-2M,2M)$. By virtue of \cref{ass:quasi_func1}, $a_M$ is directionally differentiable and Lipschitz continuous with Lipschitz constant $C_{2M}$. Thanks to \cite[Prop.~2.49]{Bonnans_Shapiro}, for all $\eta \in (-2M,2M)$,  $a_M$ is directionally differentiable at $\eta$ in the Hadamard sense, i.e.,
    \begin{equation*}
        \lim\limits_{\substack{t \to 0^+ \\ h' \to h }}\frac{a_M(\eta + th')- a(\eta)}{t} = a_M'(\eta;h) \quad \text {for all } h \in \R,
    \end{equation*}
    which together with $y(x) \in (-2M,2M)$ gives assertion (i). Furthermore, \cite[Prop.~2.49]{Bonnans_Shapiro} implies that $a_M'(y(x);\cdot)$ is Lipschitz continuous with Lipschitz constant $C_{2M}$ on $\R$. From this and the fact that $a'(y(x); \cdot) = a_M'(y(x); \cdot)$, (ii) is thus derived.  
\end{proof}

We now show existence and uniqueness of solutions to \eqref{eq:dire_deri}.
\begin{theorem} \label{thm:dire_deri1}
    Let \cref{ass:quasi_func1,ass:quasi_func2} hold and let $\bar s$ be given as in \eqref{eq:U_set}. Assume that $y \in W^{1,\bar s}_0(\Omega)$. Then, for each $v \in H^{-1}(\Omega)$, equation \eqref{eq:dire_deri} admits a unique solution $z \in H^1_0(\Omega)$.
\end{theorem}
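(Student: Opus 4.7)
The strategy is to cast \eqref{eq:dire_deri} as a fixed-point equation for a completely continuous operator and apply Schaefer's theorem for existence, and to reduce uniqueness to injectivity of a linear compact perturbation via the Fredholm alternative.

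\emph{Setup.} Since $\bar s>N$, Sobolev embedding gives $y\in C(\overline\Omega)$; set $M:=\|y\|_{C(\overline\Omega)}$. The matrix field $A(x):=a(y(x))\Id+J_b(\nabla y(x))$ lies in $L^\infty(\Omega;\R^{N\times N})$ and is uniformly positive definite with constant $a_0$, using $J_b\succeq 0$ from monotonicity of $b$ and $a(y)\ge a_0$. By \cref{lem:Lip-Hadamard}(ii) and $a'(y(x);0)=0$, one gets the pointwise bound $|a'(y(x);z(x))|\le C_{2M}|z(x)|$.

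\emph{Existence.} I define $T:H^1_0(\Omega)\to H^1_0(\Omega)$ by letting $T(z)$ be the unique Lax--Milgram solution of $-\div[A\nabla T(z)]=v+\div[a'(y;z)\nabla y]$ with homogeneous Dirichlet data. This is well-posed because, by Hölder with the exponents $s_1,\bar s$ from \eqref{eq:s1} (whose reciprocals sum to strictly less than $1/2$) and the embedding $H^1_0\hookrightarrow L^{s_1}$, the product $a'(y;z)\nabla y$ lies in some $L^r(\Omega;\R^N)$ with $r>2$, so its divergence belongs to $H^{-1}$. Furthermore $T$ is completely continuous, because the compact embedding $H^1_0\Subset L^{s_1}$ combined with pointwise Lipschitz continuity of $a'(y;\cdot)$ makes $z\mapsto a'(y;z)\nabla y$ completely continuous from $H^1_0$ into $L^2(\Omega;\R^N)$. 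A fixed point of $T$ is a solution of \eqref{eq:dire_deri}, so Schaefer's theorem reduces matters to bounding $\{z\in H^1_0:z=\sigma T(z),\ \sigma\in[0,1]\}$. By positive homogeneity of $a'(y;\cdot)$, such a $z$ satisfies $-\div[A\nabla z+\sigma a'(y;z)\nabla y]=\sigma v$; testing with $z$ and decomposing $a'(y;z)=z^+a'(y;1)+z^-a'(y;-1)$ to re-express the nonlinear integral through the Sobolev--Hölder triple $(s_1,\bar s,2)$ should produce the required uniform bound.

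\emph{Uniqueness.} For two solutions $z_1,z_2$, the difference $w:=z_1-z_2$ satisfies the linear equation $-\div[A\nabla w+\theta w\nabla y]=0$ with zero boundary data, where $\theta\in L^\infty(\Omega)$ with $|\theta|\le C_{2M}$ realizes $a'(y;z_1)-a'(y;z_2)=\theta\,w$ via the Lipschitz bound. Since the lower-order term is a compact perturbation of the isomorphism $-\div(A\nabla\cdot)$, the Fredholm alternative reduces uniqueness to injectivity, which I expect to follow either by the same integration-by-parts energy argument applied to $w$ or by a Stampacchia-type $L^\infty$-bootstrap that exploits $\nabla y\in L^{\bar s}$ with $\bar s>N$.

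\emph{Main obstacle.} The delicate step throughout is that the naïve energy estimate gives $a_0\|\nabla z\|_{L^2}^2\le\|v\|_{H^{-1}}\|\nabla z\|_{L^2}+C\|\nabla y\|_{L^{\bar s}}\|\nabla z\|_{L^2}^2$, which fails to close when $\|\nabla y\|_{L^{\bar s}}$ is not small. Overcoming this requires exploiting the positive homogeneity and piecewise-linear structure of $a'(y;\cdot)$ to convert the bad first-order perturbation $a'(y;z)\nabla y\cdot\nabla z$ into a tractable quadratic form in $(z^\pm)^2$ through integration by parts.
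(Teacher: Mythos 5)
Your overall strategy for existence (fixed point plus Leray--Schauder/Schaefer, with the complete continuity coming from $H^1_0\Subset L^{s_1}$ and the Lipschitz structure of $a'(y;\cdot)$) is the same as the paper's, and your setup is sound. The critical gap is exactly the a priori bound that you yourself flag as the "main obstacle", and the resolution you sketch for it does not work.

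Concretely, writing $a'(y;z)=z^+a'(y;1)+z^-a'(y;-1)$ and integrating by parts in $\int a'(y;z)\nabla y\cdot\nabla z\,dx$ produces terms like $\int (z^\pm)^2\,\div\bigl(a'(y;\pm1)\nabla y\bigr)dx$, which requires control of $\div(a'(y;\pm 1)\nabla y)$. You only have $y\in W^{1,\bar s}_0(\Omega)$, so $\Delta y$ lives merely in $W^{-1,\bar s}$ and $a'(y;\pm1)$ need not be weakly differentiable in $x$ at all; the quadratic form you aim for is not defined. The paper circumvents this entirely by a normalization-and-compactness argument: if the Leray--Schauder set were unbounded, one rescales $\hat\eta_k:=\eta_k/\|\eta_k\|_{L^{s_1}}$, uses positive homogeneity of $a'(y;\cdot)$ to rewrite the equation, observes that the naive energy estimate \emph{does} close for the rescaled sequence because the coupling term enters linearly and $\|\hat\eta_k\|_{L^{s_1}}=1$ is frozen, extracts a weak $H^1_0$ and strong $L^{s_1}$ limit, and then derives a contradiction from uniqueness of the limiting homogeneous problem. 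So the a priori bound is not an energy estimate at all; it is a soft compactness argument that reduces to uniqueness.

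Your uniqueness step has the same structural problem. The Fredholm alternative is an unnecessary detour (the equation for $w=z_1-z_2$ is already linear with zero data, so you just need $w=0$), and, more importantly, it does not relieve you of proving injectivity. Testing $-\div[A\nabla w+\theta w\nabla y]=0$ with $w$ gives
\begin{equation*}
    a_0\|\nabla w\|_{L^2}^2 \le C_{2M}\,|\Omega|^{1/s_2}\,\|\nabla y\|_{L^{\bar s}}\,\|w\|_{L^{s_1}}\,\|\nabla w\|_{L^2},
\end{equation*}
which does not close for large $\|\nabla y\|_{L^{\bar s}}$ --- the identical obstruction. The paper's uniqueness proof is a genuine Stampacchia truncation: test with $z_\epsilon:=\min\{\epsilon,(z_2-z_1)^+\}$, supported on the thin set $K_0\setminus K_\epsilon$, so that the bad factor becomes $\|\nabla y\|_{L^2(K_0\setminus K_\epsilon)}$, which tends to zero as $\epsilon\to 0$; this, together with the estimate $|K_\epsilon|\le\epsilon^{-2}\|z_\epsilon\|_{L^2}^2$, forces $|K_0|=0$. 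You gesture at "a Stampacchia-type $L^\infty$-bootstrap", which is indeed the right idea, but you do not carry it out, and the Fredholm framing suggests you would fall back on the energy estimate, which fails. Since the paper's existence argument also calls on this uniqueness result (to kill the rescaled limit $\hat\eta$), the truncation lemma is the one indispensable ingredient missing from your proposal.
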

\begin{proof}
    Here we first prove the uniqueness and then the existence of solutions. The arguments to show the uniqueness are similar to the ones in the proof of \cref{thm:existence}.

    \medskip

    \emph{Step 1: Uniqueness of solutions.} Let $z_1$ and $z_2$ be two solutions to \eqref{eq:dire_deri} in $H^1_0(\Omega)$.
    We define the measurable sets
    \begin{equation*}
        K_0 := \left\{ x \in \Omega \mid z_2(x) > z_1(x)\right\}, \quad K_\epsilon := \left\{ x \in \Omega \mid z_2(x) > \epsilon +z_1(x)\right\}, \quad \epsilon >0.
    \end{equation*}
    We set $z_\epsilon(x) := \min\{\epsilon, (z_2(x)-z_1(x))^+\}$. Then, $z_\epsilon \in H^{1}_0(\Omega)$, $|z_\epsilon| \leq \epsilon$, $z_\epsilon = \epsilon$ on $K_\epsilon$, and $\nabla z_\epsilon = \1_{ K_0 \setminus K_\epsilon}\nabla (z_2 - z_1)$.
    Multiplying the equations corresponding to $z_i$ by $z_\epsilon$, integrating over $\Omega$, and using integration by parts yields
    \begin{equation*}
        \int_{\Omega} \left[\left(a(y) + J_b(\nabla y) \right) \nabla z_i + a'(y;z_i) \nabla y \right]\cdot \nabla z_\epsilon dx = \langle v, z_\epsilon \rangle, \quad i=1,2.
    \end{equation*}
    Subtracting these equations gives
    \begin{equation*}
        \int_\Omega a(y) |\nabla z_\epsilon|^2 + J_b(\nabla y) \nabla z_\epsilon \cdot \nabla z_\epsilon dx = \int_{\Omega} \left( a'(y;z_1) -a'(y; z_2) \right) \nabla y \cdot \nabla z_\epsilon dx.
    \end{equation*}
    Setting $M := \max\{ |y(x)| \mid x \in \overline \Omega\}$, we see from \cref{lem:Lip-Hadamard} that for a.e. $x \in \Omega$, the mapping $\eta\mapsto a'(y(x);\eta)$ is Lipschitz continuous with Lipschitz constant $C_{2M}$.
    From this, the non-negative definiteness of $J_b$, and \cref{ass:quasi_func1}, we have
    \begin{equation*}
        \begin{aligned}[t]
            a_0 \|\nabla z_\epsilon\|_{L^2(\Omega)}^2
            & \leq \int_{K_0 \setminus K_\epsilon} C_{2M} | z_1 - z_2| | \nabla y \cdot \nabla z_\epsilon | dx \\
            & \leq C_{2M} \epsilon \|\nabla y\|_{L^{2}(K_0 \setminus K_\epsilon)}\|\nabla z_\epsilon\|_{L^2(K_0 \setminus K_\epsilon)}.
        \end{aligned}
    \end{equation*}
    Proceeding as in the proof of \cref{thm:existence} yields $z_1 = z_2$.

    \medskip

    \emph{Step 2: Existence of solutions.} Assume that $y \in W^{1,\bar s}_0(\Omega)$ and $v \in H^{-1}(\Omega)$.
    We set $M:= \|y\|_{C(\overline{\Omega})}$. We fix $\eta \in L^{s_1}(\Omega)$ with $s_1$ as given in \eqref{eq:s1} and consider the equation
    \begin{equation} \label{eq:dire_deri2}
        \left\{
            \begin{aligned}
                -\div \left [\left(a(y)\Id + J_b(\nabla y) \right) \nabla z \right] & = v + \div\left[ a'(y; \eta) \nabla y \right] && \text{in } \Omega, \\
                z& =0 && \text{on } \partial\Omega.
            \end{aligned}
        \right.
    \end{equation}
    Setting $\tilde{a}(x) := a(y(x))\Id + J_b(\nabla y(x))$ and using \ref{ass:quasi_func1}, the non-negative definiteness of the matrix $J_b(\nabla y(x))$, as well as the global Lipschitz continuity of $b$, yields
    \begin{equation*}
        a_0 |\xi|^2 \leq \tilde{a}(x)\xi \cdot \xi \leq \left( C_MM + a(0) + L_b \right)|\xi|^2 \quad \text{for all } \xi \in \R^N \text{and a.e. }x\in \Omega.
    \end{equation*}
    Furthermore, since the mapping $\eta \mapsto a'(y(x);\eta)$ is Lipschitz continuous with Lipschitz constant $C_{2M}$ for a.e. $x \in \Omega$, we have that $|a'(y;\eta)| \leq C_{2M}|\eta|$ almost everywhere in $\Omega$ and hence that $a'(y;\eta) \in L^{s_1}(\Omega)$. From this and the choice of $s_1$ (see \eqref{eq:s1}), it follows that the right hand side of equation \eqref{eq:dire_deri2} belongs to $H^{-1}(\Omega)$. Equation \eqref{eq:dire_deri2} therefore admits a unique solution $z_\eta \in H^1_0(\Omega)$, which satisfies
    \begin{equation}
        \label{eq:apriori4}
        \begin{aligned}[t]
            \| \nabla z_\eta\|_{L^2(\Omega)} &\leq \frac{1}{a_0} \left(\| v \|_{H^{-1}(\Omega)} + \| a'(y; \eta) \nabla y \|_{L^2(\Omega)} \right) \\
            & \leq \frac{1}{a_0} \left(\| v \|_{H^{-1}(\Omega)} + C_{2M}\| |\eta| \nabla y \|_{L^2(\Omega)} \right) \\
            & \leq \frac{1}{a_0} \left(\| v \|_{H^{-1}(\Omega)} + C_{2M}|\Omega|^{1/s_2}\| \eta \|_{L^{s_1}(\Omega)}\| \nabla y \|_{L^{\bar s}(\Omega)} \right).
        \end{aligned}
    \end{equation}
    Here we have just used the H\"{o}lder inequality and the second relation in \eqref{eq:s1} to obtain the last estimate.
    Since the embedding $H^1_0(\Omega) \hookrightarrow L^{s_1}(\Omega)$ is continuous, we can define the operator $T: L^{s_1}(\Omega) \ni \eta \mapsto z_\eta \in L^{s_1}(\Omega)$.
    Let $\eta_1$ and $ \eta_2$ be arbitrary in $L^{s_1}(\Omega)$ and set $z_i := T(\eta_i)$, $i=1,2$. By simple calculation, we obtain
    \begin{equation*}
        \begin{aligned}
            \| \nabla z_1- \nabla z_2\|_{L^2(\Omega)} &\leq \frac{1}{a_0} \| \left(a'(y; \eta_1) - a'(y;\eta_2) \right) \nabla y \|_{L^2(\Omega)} \\
            & \leq \frac{1}{a_0} C_{2M}\| |\eta_1 - \eta_2| \nabla y \|_{L^2(\Omega)} \\
            & \leq \frac{1}{a_0} C_{2M}|\Omega|^{1/s_2}\| \eta_1 - \eta_2 \|_{L^{s_1}(\Omega)}\| \nabla y \|_{L^{\bar s}(\Omega)}.
        \end{aligned}
    \end{equation*}
    This implies the continuity of $T$. Furthermore, as a result of \eqref{eq:apriori4} and the compact embedding $H^1_0(\Omega) \Subset L^{s_1}(\Omega)$, $T$ is compact.
    We shall apply the Leray--Schauder principle to show that operator $T$ admits at least one fixed point $z$, which is then a solution to equation \eqref{eq:dire_deri}. To this end, we need prove the set
    \begin{equation*}
        K:= \left\{ \eta \in L^{s_1}(\Omega) \mid \exists t \in (0,1): \eta = T(t\eta) \right\}
    \end{equation*}
    is bounded.

    We now argue by contradiction. Assume that there exist sequences $\{\eta_k\} \subset K$ and $\{t_k\} \subset (0,1)$ such that $\eta_k = T(t_k\eta_k)$ and $\lim_{k \to \infty} \|\eta_k \|_{L^{s_1}(\Omega)} = \infty$.
    We have
    \begin{equation} \label{eq:deri1}
        \left\{
            \begin{aligned}
                -\div \left [\left(a(y)\Id + J_b(\nabla y) \right) \nabla \eta_k \right] &= v + \div\left[ a'(y; t_k\eta_k) \nabla y \right] && \text{in } \Omega, \\
                \eta_k& =0 && \text{on } \partial\Omega.
            \end{aligned}
        \right.
    \end{equation}
    Let us set $r_k := \frac{1}{\|\eta_k \|_{L^{s_1}(\Omega)}} \to 0$ and $\hat \eta_k := r_k\eta_k$.
    From this, \eqref{eq:deri1}, and the positive homogeneity of $a'(y;\cdot)$, we arrive at
    \begin{equation} \label{eq:deri2}
        \left\{
            \begin{aligned}
                -\div \left [\left(a(y)\Id + J_b(\nabla y) \right) \nabla \hat \eta_k \right]& = r_k v + \div\left[t_k a'(y; \hat\eta_k) \nabla y \right] && \text{in } \Omega, \\
                \hat \eta_k& =0 && \text{on } \partial\Omega.
            \end{aligned}
        \right.
    \end{equation}
    By simple computation, we deduce from $\| \hat \eta_k \|_{L^{s_1}(\Omega)}=1$ that
    \begin{equation}
        \label{eq:apriori5}
        \begin{aligned}[t]
            \| \nabla \hat \eta_k\|_{L^2(\Omega)} &\leq \frac{1}{a_0} \left(r_k\| v \|_{H^{-1}(\Omega)} + \|t_k a'(y;\hat \eta_k) \nabla y \|_{L^2(\Omega)} \right) \\
            & \leq \frac{1}{a_0} \left(r_k\| v \|_{H^{-1}(\Omega)} + C_{2M}|\Omega|^{1/s_2}\| \hat \eta_k \|_{L^{s_1}(\Omega)}\| \nabla y \|_{L^{\bar s}(\Omega)} \right)\\
            & = \frac{1}{a_0} \left(r_k\| v \|_{H^{-1}(\Omega)} + C_{2M}|\Omega|^{1/s_2}\| \nabla y \|_{L^{\bar s}(\Omega)} \right) \\
            & \leq C
        \end{aligned}
    \end{equation}
    for all $k \in \N$ and for some constant $C$ independent of $k \in \N$.
    From this and the compact embedding $H^1_0(\Omega) \Subset L^{s_1}(\Omega)$, we can assume that $\hat \eta_k \rightharpoonup \hat \eta$ in $H^1_0(\Omega)$ and $\hat \eta_k \to \hat \eta$ in $L^{s_1}(\Omega)$ for some $\hat \eta \in H^1_0(\Omega)$. The Lipschitz continuity of $a'(y(x);\cdot)$, for a.e. $x \in \Omega$, implies that $a'(y;\hat \eta_k) \to a'(y;\hat \eta)$ in $L^{s_1}(\Omega)$.
    Moreover, we can assume that $t_k \to t_0 \in [0,1]$. Letting $k \to \infty$ in equation \eqref{eq:deri2}, we see from the above arguments that
    \begin{equation} \label{eq:deri3}
        \left\{
            \begin{aligned}
                -\div \left [\left(a(y)\Id + J_b(\nabla y) \right) \nabla \hat \eta \right] & = \div\left[t_0 a'(y; \hat\eta) \nabla y \right] && \text{in } \Omega, \\
                \hat \eta &=0 && \text{on } \partial\Omega.
            \end{aligned}
        \right.
    \end{equation}
    The uniqueness of solutions to \eqref{eq:deri3} gives $\hat \eta = 0$, which is in contradiction to $\|\hat \eta\|_{L^{s_1}(\Omega)} = \lim_{k \to \infty}\|\hat \eta_k\|_{L^{s_1}(\Omega)} = 1$.
\end{proof}

We next show boundedness and continuity properties of \eqref{eq:dire_deri}.
\begin{theorem}\label{thm:dire_deri2}
    Let \cref{ass:quasi_func1,ass:quasi_func2} hold and let $\bar s$ be as given in \eqref{eq:U_set}.
    \begin{enumerate}[label=(\roman*)]
        \item
            If $\{y_n\}$ is bounded in $W^{1,\bar s}_0(\Omega)$ such that $\nabla y_n \to \nabla y$ in $L^2(\Omega)$ for some $y \in W^{1,\bar s}_0(\Omega)$, and if $\{v_n\}$ is bounded $H^{-1}(\Omega)$, then there exists a constant $C_2$ depending only on $a_0$, $\Omega$, $N$, $\bar s$, $\|y\|_{W^{1,\bar s}_0(\Omega)}$, $\sup\{\|y_n\|_{W^{1,\bar s}_0(\Omega)}\}$, and $\sup\{\|v_n\|_{H^{-1}(\Omega)}\}$ such that
            \begin{equation} \label{eq:apriori_DD}
                \|z(y_n, v_n)\|_{H^1_0{(\Omega)}} \leq C_2
            \end{equation}
            for all solutions $z(y_n,v_n)$ of \eqref{eq:dire_deri} corresponding to $y_n$ and $v_n$.

        \item
            If $v_n \to v$ in $H^{-1}(\Omega)$, then $z(y,v_n) \to z(y,v)$ in $H^1_0(\Omega)$.
    \end{enumerate}
\end{theorem}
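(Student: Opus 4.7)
The plan is to prove assertion (i) by a rescaling-and-contradiction argument patterned on the boundedness step of the Leray--Schauder scheme used in \cref{thm:dire_deri1}, and to deduce (ii) by combining that a priori bound with the uniqueness stated in \cref{thm:dire_deri1}.

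For (i), I would test \eqref{eq:dire_deri} with $z_n := z(y_n, v_n)$ and use the coercivity from \cref{ass:quasi_func1}, the non-negative definiteness of $J_b(\nabla y_n)$, the direction-Lipschitz bound $|a'(y_n; z_n)| \leq C_{2M}|z_n|$ from \cref{lem:Lip-Hadamard}, and the three-factor H\"older inequality (exponents $s_1, \bar s, 2$ together with the $|\Omega|^{1/s_2}$ factor from \eqref{eq:s1}) to obtain
\begin{equation*}
    a_0 \|\nabla z_n\|_{L^2(\Omega)} \leq \|v_n\|_{H^{-1}(\Omega)} + C_{2M}|\Omega|^{1/s_2} \|\nabla y_n\|_{L^{\bar s}(\Omega)} \|z_n\|_{L^{s_1}(\Omega)}.
\end{equation*}
Since the Sobolev embedding $H^1_0(\Omega) \hookrightarrow L^{s_1}(\Omega)$ is not small enough to absorb the last term, I then argue by contradiction: supposing $\|z_n\|_{H^1_0(\Omega)} \to \infty$ along a subsequence, I set $r_n := \|z_n\|_{L^{s_1}(\Omega)}^{-1} \to 0$ and $\hat z_n := r_n z_n$. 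Positive homogeneity of $a'(y_n; \cdot)$ shows that $\hat z_n$ solves the same linearized equation with right-hand side $r_n v_n \to 0$ in $H^{-1}(\Omega)$, and the rescaled estimate yields a uniform $H^1_0$-bound on $\hat z_n$. After extraction, $\hat z_n \weakto \hat z$ in $H^1_0(\Omega)$, $\hat z_n \to \hat z$ in $L^{s_1}(\Omega)$ with $\|\hat z\|_{L^{s_1}(\Omega)} = 1$; $y_n \to y$ in $C(\overline\Omega)$ (from $W^{1,\bar s}_0(\Omega) \Subset C(\overline\Omega)$ and weak compactness in $W^{1,\bar s}_0$); and, after a further subsequence, $\nabla y_n \to \nabla y$ and $\hat z_n \to \hat z$ pointwise a.e.

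The coefficient terms pass to the limit routinely: $a(y_n) \to a(y)$ uniformly and $J_b(\nabla y_n) \to J_b(\nabla y)$ in every $L^q(\Omega)$ by dominated convergence (using $|J_b| \leq L_b$), so their products with the weakly convergent $\nabla \hat z_n$ pass to the limit when tested against any $\phi \in C_c^\infty(\Omega)$. The main obstacle is the passage to the limit in the nonlinear coupling $a'(y_n; \hat z_n)\nabla y_n$. I would first invoke the direction-Lipschitz bound $|a'(y_n; \hat z_n) - a'(y_n; \hat z)| \leq C_{2M}|\hat z_n - \hat z|$ together with the strong $L^{s_1}$-convergence of $\hat z_n$ and the $L^{\bar s}$-bound on $\nabla y_n$ to reduce to passing to the limit in $a'(y_n; \hat z)\nabla y_n$, then split off the strong $L^2$-convergence of $\nabla y_n$ to reduce further to identifying the pointwise limit of $a'(y_n; \hat z)$; this identification $a'(y_n; \hat z) \to a'(y; \hat z)$ as $y_n(x) \to y(x)$, despite the non-smoothness of $a$, is the delicate step and is handled by combining the Hadamard/Lipschitz joint structure of $a'$ provided by \cref{lem:Lip-Hadamard} with dominated convergence under the uniform majorant $C_{2M}|\hat z|$. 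Once achieved, $\hat z$ solves \eqref{eq:dire_deri} at $y$ with right-hand side $0$, so uniqueness from \cref{thm:dire_deri1} forces $\hat z = 0$, contradicting $\|\hat z\|_{L^{s_1}(\Omega)} = 1$.

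For (ii), the bound from (i) with $y_n \equiv y$ renders $\{z(y, v_n)\}$ bounded in $H^1_0(\Omega)$; a subsequence converges weakly in $H^1_0$ and strongly in $L^{s_1}$ to some $z^*$. Since $y$ is now fixed, the limit-passing is considerably simpler, needing only the direction-Lipschitz continuity of $a'(y; \cdot)$ (exactly as in the existence step of \cref{thm:dire_deri1}), and $z^*$ solves \eqref{eq:dire_deri} with right-hand side $v$; by uniqueness, $z^* = z(y, v)$, so the whole sequence converges. Strong $H^1_0$-convergence is finally upgraded by subtracting the equations for $z(y, v_n)$ and $z(y, v)$, testing with their difference, and applying the same H\"older chain to get
\begin{equation*}
    a_0 \|\nabla(z(y, v_n) - z(y, v))\|_{L^2(\Omega)} \leq \|v_n - v\|_{H^{-1}(\Omega)} + C_{2M}|\Omega|^{1/s_2} \|\nabla y\|_{L^{\bar s}(\Omega)} \|z(y, v_n) - z(y, v)\|_{L^{s_1}(\Omega)},
\end{equation*}
whose right-hand side vanishes by the hypothesis $v_n \to v$ in $H^{-1}(\Omega)$ and by the already-established strong $L^{s_1}$-convergence.
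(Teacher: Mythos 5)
Your overall architecture for part (i) — rescale, extract a weak limit $\hat z$ with $\|\hat z\|_{L^{s_1}(\Omega)}=1$, pass to the limit in the rescaled equation, and contradict uniqueness — is exactly the paper's, and your part (ii) also matches the paper's argument. However, there is a genuine gap in your treatment of the nonlinear coupling in (i).

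You reduce the problem to showing $a'(y_n(x);\hat z(x)) \to a'(y(x);\hat z(x))$ a.e.\ as $y_n(x)\to y(x)$, claiming this follows from \cref{lem:Lip-Hadamard} plus dominated convergence. This is false: \cref{lem:Lip-Hadamard} gives Hadamard differentiability and Lipschitz continuity in the \emph{direction} variable $h$, not continuity of $\eta\mapsto a'(\eta;h)$ in the \emph{base point} $\eta$. Already for $a(t)=1+|t|$ one has $a'(0;1)=1$ but $a'(-1/n;1)=-1$, so $a'(y_n(x);\hat z(x))\not\to a'(y(x);\hat z(x))$ whenever $y_n(x)$ approaches $y(x)=0$ from below and $\hat z(x)>0$. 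In other words, directional derivatives of Lipschitz, directionally differentiable functions need not depend continuously on the base point, and the majorant $C_{2M}|\hat z|$ does not help since dominated convergence still requires pointwise convergence of the integrands.

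The paper avoids this altogether by \emph{not} identifying the limit in terms of $a'$. After the reduction you already perform, it writes $a'(y_n;\hat z_n)=c_n\hat z_n$ with $c_n(x):=a'(y_n(x);\hat z_n(x))/\hat z_n(x)$ on $\{\hat z_n\neq 0\}$ and $c_n=0$ elsewhere, so $|c_n|\leq C_{2M}$ a.e.\ by the direction-Lipschitz bound. Passing to a weak-* limit $c_n\rightharpoonup^* c$ in $L^\infty(\Omega)$ and using the strong $L^{s_1}$-convergence of $\hat z_n$ gives $c_n\hat z_n\rightharpoonup c\hat z$, so $\hat z$ solves the linear equation with coefficient $c\hat z$ (with $c$ some bounded measurable function, in general unrelated to $a'(y;\cdot)$) and zero right-hand side. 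The key point is that the uniqueness argument from \cref{thm:dire_deri1} depends only on the Lipschitz bound $|c(x)\eta_1 - c(x)\eta_2|\leq C_{2M}|\eta_1-\eta_2|$, not on the precise form of the coefficient, so it applies directly to this limit equation and yields $\hat z=0$, completing the contradiction. Your argument can be repaired by adopting this linearization device in place of the nonconvergent identification $a'(y_n;\hat z)\to a'(y;\hat z)$.
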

\begin{proof}
    \emph{Ad (i):} Assume that $\{y_n\}$ is bounded in $W^{1,\bar s}_0(\Omega)$ such that $\nabla y_n \to \nabla y$ in $L^2(\Omega)$ for some $y \in W^{1,\bar s}_0(\Omega)$, and $\{v_n\}$ is bounded $H^{-1}(\Omega)$.
    Then there exists a constant $M>0$ such that
    \begin{equation}
        \| y\|_{C(\overline\Omega)}, \| y_n\|_{C(\overline\Omega)}, \| y\|_{W^{1,\bar s}_0(\Omega)} , \| y_n\|_{W^{1,\bar s}_0(\Omega)}\leq M
    \end{equation} for all $n \geq 1$.
    Let $z_n:= z(y_n, v_n)$ be the solution to \eqref{eq:dire_deri} corresponding to $y_n$ and $v_n$. We shall prove the boundedness of $\{z_n\}$ in $H^1_0(\Omega)$ by contradiction. Suppose that there exists a subsequence, again denoted by $\{z_n\}$, such that
    \begin{equation} \label{eq:contradiction}
        \lim_{n \to \infty} \| z_n \|_{H^1_0(\Omega)} = \infty.
    \end{equation}
    Since $z_n$ satisfies equation \eqref{eq:dire_deri} corresponding to $y:= y_n$ and $v := v_n$, one has
    \begin{equation*}
        \int_{\Omega} \left[a(y_n) + J_b(\nabla y_n) \right]\nabla z_n \cdot \nabla z_n dx = \left\langle v_n, z_n \right \rangle - \int_\Omega a'(y_n; z_n) \nabla y_n \cdot \nabla z_n dx.
    \end{equation*}
    Combining this with \cref{ass:quasi_func1}, the non-negative definiteness of the matrix $J_b(\nabla y_n(x))$, and the Lipschitz continuity of $a'(y_n(x);\cdot)$, the Hölder inequality and the second relation in \eqref{eq:s1} lead to
    \begin{equation} \label{eq:zn_esti}
        \begin{aligned}[t]
            \|\nabla z_n\|_{L^2(\Omega)} &\leq \frac{1}{a_0} \left(\| v_n \|_{H^{-1}(\Omega)} + \|a'(y_n; z_n) \nabla y_n \|_{L^2(\Omega)} \right) \\
            & \leq \frac{1}{a_0} \left(\| v_n \|_{H^{-1}(\Omega)} + C_{2M}|\Omega|^{1/s_2}\| z_n \|_{L^{s_1}(\Omega)}\| \nabla y_n \|_{L^{\bar s}(\Omega)} \right).
        \end{aligned}
    \end{equation}
    From this and \eqref{eq:contradiction}, we obtain $\lim_{n \to \infty} \| z_n \|_{L^{s_1}(\Omega)} = \infty$.
    Setting $t_n:= \frac{1}{\|z_n \|_{L^{s_1}(\Omega)}}$ and $\hat z_n := t_n z_n$
    yields that
    \begin{equation} \label{eq:lim}
        t_n \to 0 \quad \text{and} \quad \lim_{n \to \infty} \| \hat z_n \|_{L^{s_1}(\Omega)} = 1.
    \end{equation}
    On the other hand, $\hat z_n$ satisfies
    \begin{equation} \label{eq:deri4}
        \left\{
            \begin{aligned}
                -\div \left [\left(a(y_n)\Id + J_b(\nabla y_n) \right) \nabla \hat z_n \right] & = t_n v_n + \div\left[a'(y_n; \hat z_n) \nabla y_n \right] && \text{in } \Omega, \\
                \hat z_n &= 0 && \text{on } \partial\Omega.
            \end{aligned}
        \right.
    \end{equation}
    The same argument as in \eqref{eq:zn_esti} gives
    \begin{equation*}
        \|\nabla \hat z_n\|_{L^2(\Omega)} \leq \frac{1}{a_0} \left(t_n\| v_n \|_{H^{-1}(\Omega)} + C_{2M} |\Omega|^{1/s_2}\|\hat z_n \|_{L^{s_1}(\Omega)}\| \nabla y_n \|_{L^{\bar s}(\Omega)} \right)
        \leq C
    \end{equation*}
    for all $n \in \N$ and for some constant $C$ independent of $n \in \N$.
    Consequently, $\{\hat z_n\}$ is bounded in $H^1_0(\Omega)$. We can thus extract a subsequence, denoted in the same way, such that $\hat z_n \rightharpoonup \hat z$ in $H^1_0(\Omega)$ and $\hat z_n \to \hat z$ in $L^{s_1}(\Omega)$ for some $\hat z \in H^1_0(\Omega)$. We write $a'(y_n; \hat z_n) = c_n \hat z_n$, where
    \begin{equation}
        c_n(x) :=
        \begin{cases}
            \frac{a'(y_n(x); \hat z_n(x))}{\hat z_n(x)} & \text{if } \hat z_n(x) \neq 0,\\
            0 & \text{otherwise}
        \end{cases}
    \end{equation} for a.e. $x \in \Omega$.
    We have $|c_n(x)| \leq C_{2M}$ because of the Lipschitz continuity of $a'(y_n(x); \cdot)$ for a.e. $x \in \Omega$. Again, by using a subsequence, we can assume that
    $c_n \hat z_n \rightharpoonup c\hat z$ in $L^{s_0}(\Omega)$ for any $s_0 \in [1,s_1)$, particularly for $s_0$ with $s_0^{-1}+ \bar s^{-1} = 2^{-1}$.
    Since $\{y_n\}$ is bounded in $W^{1,\bar s}_0(\Omega)$, we can assume that $y_n \to y$ in $C(\overline{\Omega})$, as a result of the compact embedding $W^{1,\bar s}(\Omega) \Subset C(\overline{\Omega})$. Consequently, $a(y_n) \to a(y)$ in $C(\overline\Omega)$. Moreover, the Lebesgue dominated convergence theorem together with the fact that $\nabla y_n \to \nabla y$ in measure implies that $J_b(\nabla y_n) \to J_b(\nabla y)$ in $L^{m}(\Omega)$ for all $m \geq 1$. Letting $n \to \infty$ in equation \eqref{eq:deri4}, we arrive at
    \begin{equation} \label{eq:deri5}
        \left\{
            \begin{aligned}
                -\div \left [\left(a(y)\Id + J_b(\nabla y) \right) \nabla \hat z \right] &= \div\left[c \hat z \nabla y \right] && \text{in } \Omega, \\
                \hat z &=0 && \text{on } \partial\Omega.
            \end{aligned}
        \right.
    \end{equation}
    As in the proof of \cref{thm:dire_deri1}, we can show that \eqref{eq:deri5} has at most one solution and hence that $\hat z = 0$. However, by virtue of the second limit in \eqref{eq:lim}, we have $\|\hat z \|_{L^{s_1}(\Omega)} = 1$, which yields a contradiction.

    We have thus shown that sequence $\{z_n\}$ is bounded in $H^1_0(\Omega)$. From estimate \eqref{eq:zn_esti} and the choice of $s_1, s_2$, the upper bound of $\{\|z_n\|_{H^1_0(\Omega)}\}$ depends only on $M, \sup\{ \|v_n\|_{H^{-1}(\Omega)} \}, a_0, \Omega$, and $\bar s, s_1, s_2$ and so on $N$. This shows the a priori estimate \eqref{eq:apriori_DD}.

    \medskip

    \emph{Ad (ii):} Assume now that $v_n \to v$ in $H^{-1}(\Omega)$. Setting $\tilde{z}_n := z(y, v_n)$, we see from \eqref{eq:apriori_DD} and the compact embedding $H^1_0(\Omega) \Subset L^{s_1}(\Omega)$ that
    \begin{equation} \label{eq:lim_dd}
        \tilde{z}_{n_k} \rightharpoonup \tilde{z } \text{ in } H^1_0(\Omega) \quad \text{and} \quad \tilde{z}_{n_k} \to \tilde{z } \text{ in } L^{s_1}(\Omega)
    \end{equation}
    for some subsequence $\{n_k\} \subset \N$ and some function $\tilde{z} \in H^1_0(\Omega)$. By letting $k \to \infty$, the uniqueness of solutions to \eqref{eq:dire_deri} guarantees $\tilde{z} = z(y,v)$. On the other hand, $\tilde{z}_{n_k}$ and $\tilde{z}$ satisfy the equation
    \begin{equation*}
        \left \{
            \begin{aligned}
                -\div \left [\left(a(y)\Id + J_b(\nabla y) \right) \nabla (\tilde{z}_{n_k} - \tilde{z}) + \left( a'(y; \tilde{z}_{n_k}) - a'(y; \tilde{z}) \right) \nabla y \right] &= v_{n_k} - v && \text{in } \Omega, \\
                \tilde{z}_{n_k} - \tilde{z} &=0 && \text{on } \partial\Omega.
            \end{aligned}
        \right.
    \end{equation*}
    The same arguments as above yield that
    \begin{equation*}
        \begin{aligned}[t]
            \|\nabla ( \tilde{z}_{n_k} - \tilde z) \|_{L^2(\Omega)} &\leq \frac{1}{a_0} \left(\| v_{n_k} - v \|_{H^{-1}(\Omega)} + \|(a'(y; \tilde{z}_{n_k}) - a'(y; \tilde{z})) \nabla y \|_{L^2(\Omega)} \right) \\
            & \leq \frac{1}{a_0} \left(\| v_{n_k} - {v}\|_{H^{-1}(\Omega)} + C_{2M}|\Omega|^{1/s_2}\| \tilde{z}_{n_k} - \tilde z\|_{L^{s_1}(\Omega)}\| \nabla y\|_{L^{\bar s}(\Omega)} \right),
        \end{aligned}
    \end{equation*}
    which together with the second limit in \eqref{eq:lim_dd} gives $\tilde{z}_{n_k} \to \tilde{z}$ in $H^1_0(\Omega)$ as $k \to \infty$. 
    Recall that $s_1$ and $s_2$ are defined by \eqref{eq:s1}.
    From this and the uniqueness of solutions to \eqref{eq:dire_deri}, we obtain $\tilde{z}_{n} \to \tilde{z}$ in $H^1_0(\Omega)$ as $n \to \infty$.

\end{proof}

As a result of the compact embedding $L^p(\Omega) \Subset W^{-1, \tilde p}(\Omega)$ with $\tilde p$ as given in \eqref{eq:p_star}, \cref{thm:dire_deri2,thm:regu}, we have the following corollary.
\begin{corollary} \label{cor:dire_deri}
    Let \cref{ass:quasi_func1,ass:quasi_func2} hold true. Assume that $U$ is the open ball given by \eqref{eq:U_set} and that $V$ is a bounded set in $H^{-1}(\Omega)$. For each $u \in U$ and $v \in V$, let $z_{u,v}$ stand for the solution to \eqref{eq:dire_deri} corresponding to $y := y_u$ and $v$.
    Then, there exists a constant $C_3 := C_3\left(a_0, \Omega, N, p, U, V\right )$ such that
    \begin{equation}
        \| z_{u,v}\|_{H^1_0(\Omega) } \leq C_3 \quad \text{for all} \quad u \in U, v \in V.
    \end{equation}
\end{corollary}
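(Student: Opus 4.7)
The plan is to combine the regularity results already established: \cref{thm:regu} provides uniform $W^{1,\bar s}_0(\Omega)$-bounds on the states $y_u$ parametrized by $u \in U$, and \cref{thm:dire_deri2}(i) then supplies the $H^1_0(\Omega)$-bound on the solutions $z_{u,v}$ once uniform control of $y_u$ (with a suitable convergent subsequence) and of $v$ is available. I will proceed by contradiction, since this is the cleanest way to manufacture the convergent sequence of states that \cref{thm:dire_deri2}(i) requires as an input.

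First, since $U$ is bounded in $L^p(\Omega)$ and the embedding $L^p(\Omega) \Subset W^{-1,\tilde p}(\Omega)$ (with $\tilde p$ as in \eqref{eq:p_star}) is continuous, $U$ is bounded in $W^{-1,\tilde p}(\Omega)$. Applying \cref{thm:regu}(i) with $p^* := \tilde p$ then yields a constant depending only on $a_0,\Omega,N,p,U$ such that $\|y_u\|_{W^{1,\bar s}_0(\Omega)}$ is uniformly bounded for $u \in U$; the set $V$ is uniformly bounded in $H^{-1}(\Omega)$ by hypothesis.

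Now suppose toward a contradiction that no uniform constant $C_3$ exists. Then there are sequences $\{u_n\}\subset U$ and $\{v_n\}\subset V$ with $\|z_{u_n,v_n}\|_{H^1_0(\Omega)}\to \infty$. By reflexivity of $L^p(\Omega)$, after passing to a subsequence $u_n \weakto u$ in $L^p(\Omega)$, and the compact embedding above gives $u_n \to u$ strongly in $W^{-1,\tilde p}(\Omega)$. Invoking \cref{thm:regu}(ii) yields $y_{u_n} \to y_u$ in $W^{1,r}_0(\Omega)$ for every $r \in [1,s)$, and in particular $\nabla y_{u_n} \to \nabla y_u$ in $L^2(\Omega)$.

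Thus $\{y_{u_n}\}$ is bounded in $W^{1,\bar s}_0(\Omega)$, $y_u \in W^{1,\bar s}_0(\Omega)$, $\{v_n\}$ is bounded in $H^{-1}(\Omega)$, and $\nabla y_{u_n}\to \nabla y_u$ in $L^2(\Omega)$, so \cref{thm:dire_deri2}(i) applies to $z(y_{u_n},v_n)=z_{u_n,v_n}$ and produces a constant $C_2$ depending only on $a_0,\Omega,N,\bar s$ and on the uniform bounds on $\|y_{u_n}\|_{W^{1,\bar s}_0(\Omega)}$, $\|y_u\|_{W^{1,\bar s}_0(\Omega)}$, and $\|v_n\|_{H^{-1}(\Omega)}$, such that $\|z_{u_n,v_n}\|_{H^1_0(\Omega)} \leq C_2$ for all $n$; this contradicts the assumed divergence. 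Tracing dependencies through \cref{thm:regu}(i) and the bound on $V$ shows that $C_2$ ultimately depends only on $a_0,\Omega,N,p,U,V$, giving the required $C_3$. No step is genuinely difficult; the only point to watch is that the constant from \cref{thm:dire_deri2}(i) is controlled by the \emph{bounds} on the input sequences rather than on any specific choice of them, which is precisely what the contradiction setup delivers.
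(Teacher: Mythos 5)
Your proof is correct and follows exactly the ingredients the paper cites (compact embedding $L^p(\Omega)\Subset W^{-1,\tilde p}(\Omega)$, \cref{thm:regu}, and \cref{thm:dire_deri2}); the paper states the corollary without proof, and your contradiction argument is the natural way to convert the sequential hypothesis of \cref{thm:dire_deri2}(i) into the uniform bound, including the correct observation that the weak $L^p$-limit $u$ still lies in a bounded set of $W^{-1,\tilde p}(\Omega)$ so that $y_u \in W^{1,\bar s}_0(\Omega)$.
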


\begin{theorem} \label{thm:DD}
    Assume that \cref{ass:quasi_func1,ass:quasi_func2} are valid and that $U$ is the open ball in $L^p(\Omega)$ defined as in \eqref{eq:U_set}. Then $S: U \to H^1_0(\Omega)$ is directional differentiable. Moreover, for any $u \in U$ and $v \in L^p(\Omega)$, $z := S'(u;v)$ is the unique solution in $H^1_0(\Omega)$ of the equation
    \begin{equation*}
        \left\{
            \begin{aligned}
                -\div \left [\left(a(y_u)\Id + J_b(\nabla y_u) \right) \nabla z + a'(y_u; z) \nabla y_u \right] & = v && \text{in } \Omega, \\
                z &=0 && \text{on } \partial\Omega.
            \end{aligned}
        \right.
    \end{equation*}
\end{theorem}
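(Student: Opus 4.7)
\emph{Proof plan.}
Fix $u \in U$ and $v \in L^p(\Omega)$, and for sufficiently small $t>0$ set $y_t := S(u+tv)$ and $z_t := t^{-1}(y_t - y_u)$. My goal is to show that $z_t \to z$ in $H^1_0(\Omega)$ as $t \to 0^+$, where $z$ denotes the unique solution of the linearized equation furnished by \cref{thm:dire_deri1} (applicable since $y_u \in W^{1,\bar s}_0(\Omega)$ by \cref{thm:regu}). Subtracting the state equations for $y_t$ and $y_u$, dividing by $t$, and using the mean value theorem to write $t^{-1}(b(\nabla y_t) - b(\nabla y_u)) = T_t \nabla z_t$ with $T_t(x) := \int_0^1 J_b(\nabla y_u(x) + s t \nabla z_t(x))\,ds$, one finds that $z_t$ satisfies
\begin{equation*}
    -\div\left[(a(y_t)\Id + T_t)\nabla z_t + q_t \nabla y_u\right] = v \quad \text{in } \Omega, \qquad z_t = 0 \quad \text{on } \partial\Omega,
\end{equation*}
with $q_t(x) := t^{-1}(a(y_t(x)) - a(y_u(x)))$. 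The local Lipschitz bound in \cref{ass:quasi_func1} combined with the uniform $C(\overline\Omega)$-bound afforded by \cref{cor:continuity} yields $|q_t| \le C_M|z_t|$ for some $M$ independent of $t$, so this equation is structurally parallel to \eqref{eq:dire_deri2}.

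\emph{Boundedness of $z_t$ in $H^1_0(\Omega)$.} I would argue by contradiction, imitating the blow-up strategy in the proof of \cref{thm:dire_deri2}. Supposing $\beta_n := \|z_{t_n}\|_{L^{s_1}(\Omega)} \to \infty$ along some sequence $t_n \to 0^+$, I renormalize $\hat z_n := z_{t_n}/\beta_n$; an energy estimate on the rescaled equation (using $|\hat q_n| = |q_{t_n}|/\beta_n \leq C_M|\hat z_n|$) shows $\{\hat z_n\}$ is bounded in $H^1_0(\Omega)$, so a subsequence satisfies $\hat z_n \rightharpoonup \hat z$ in $H^1_0(\Omega)$ and $\hat z_n \to \hat z$ both in $L^{s_1}(\Omega)$ and a.e., with $\|\hat z\|_{L^{s_1}(\Omega)} = 1$. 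The decisive observation is
\begin{equation*}
    \hat q_n(x) = \frac{a(y_u(x) + s_n \hat z_n(x)) - a(y_u(x))}{s_n}, \qquad s_n := t_n\beta_n = \|y_{t_n} - y_u\|_{L^{s_1}(\Omega)} \to 0,
\end{equation*}
where $s_n \to 0$ follows from $y_t \to y_u$ in $C(\overline\Omega)$ via \cref{cor:continuity}. The Hadamard property in \cref{lem:Lip-Hadamard}(i) then forces $\hat q_n \to a'(y_u; \hat z)$ a.e., and Vitali's theorem (using equi-integrability of $|\hat z_n|^{s_1}$) upgrades this to strong $L^{s_1}$-convergence. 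Passing to the limit in the weak formulation of the rescaled equation shows $\hat z$ solves the homogeneous linearized equation, hence $\hat z = 0$ by the uniqueness part of \cref{thm:dire_deri1}, contradicting $\|\hat z\|_{L^{s_1}(\Omega)} = 1$.

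\emph{Limit passage and strong convergence.} Once $\{z_t\}$ is bounded in $H^1_0(\Omega)$, I extract $z_{t_n} \rightharpoonup z^*$ in $H^1_0(\Omega)$ with $L^{s_1}$- and a.e. convergence. Since $t_n \nabla z_{t_n} \to 0$ in $L^2(\Omega)$, dominated convergence gives $T_{t_n} \to J_b(\nabla y_u)$ in every $L^m(\Omega)$; \cref{lem:Lip-Hadamard}(i) combined with the bound $|q_{t_n}| \le C_M|z_{t_n}|$ and Vitali's theorem gives $q_{t_n} \to a'(y_u; z^*)$ in $L^{s_1}(\Omega)$; and $a(y_{t_n}) \to a(y_u)$ in $C(\overline\Omega)$. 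Passing to the limit in the weak form of the $z_{t_n}$-equation shows that $z^*$ solves the linearized equation, so $z^* = z$ by the uniqueness in \cref{thm:dire_deri1} and the entire net $\{z_t\}$ converges weakly to $z$. For strong $H^1_0$-convergence I subtract the equations for $z_t$ and $z$, test with $z_t - z$, and use uniform ellipticity $(a(y_t)\Id + T_t) \ge a_0 \Id$ on the left and the coefficient convergences above (together with $\nabla y \in L^{\bar s}$ and the Hölder exponent relation in \eqref{eq:s1}) on the right to deduce $\|z_t - z\|_{H^1_0(\Omega)} \to 0$. The main obstacle is the scale-matching in the blow-up step: the identity $t_n\beta_n = \|y_{t_n} - y_u\|_{L^{s_1}(\Omega)}$ is precisely what forces $s_n \to 0$, which is in turn what allows the actual Hadamard directional derivative (rather than a generic finite-difference limit) to enter the contradiction equation.
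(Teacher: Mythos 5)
Your proof is correct and follows essentially the same route as the paper: a blow-up normalization of the difference quotients $z_t$, the crucial scale-matching identity $t_n\|z_{t_n}\|_{L^{s_1}(\Omega)} = \|y_{t_n}-y_u\|_{L^{s_1}(\Omega)}\to 0$ that makes the Hadamard directional derivative of $a$ appear in the contradiction equation, and then passage to the limit followed by a final energy estimate for strong $H^1_0$-convergence. The only cosmetic differences (pairing the difference quotient of $a$ with the fixed $\nabla y_u$ instead of $\nabla y_t$, and invoking Vitali's theorem where the paper extracts an a.e.-dominated subsequence before applying Lebesgue's theorem) leave the argument unchanged.
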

\begin{proof}
    For any $u \in U$ and $v \in L^p(\Omega)$, we set $y := S(u)$, $y_\rho := S(u + \rho v)$, and $z_\rho := \frac{y_\rho - y}{\rho}$ for $\rho > 0$. A simple computation shows that
    \begin{equation} \label{eq:dd_S}
        \left\{
            \begin{aligned}
                -\div\left[ \frac{a(y + \rho z_\rho)-a(y)}{\rho} \nabla y_\rho + a(y) \nabla z_\rho + \frac{b(\nabla y + \rho \nabla z_\rho) - b(\nabla y)}{\rho} \right] &= v && \text{in } \Omega, \\
                z_\rho & = 0 && \text{on } \partial\Omega.
            \end{aligned}
        \right.
    \end{equation}
    Multiplying the above equation by $z_\rho$, integrating over $\Omega$, and using integration by parts, we see from \cref{ass:quasi_func1} and the monotonicity of $b$ that
    \begin{equation*}
        a_0 \| \nabla z_\rho \|_{L^2(\Omega)} \leq \| v \|_{H^{-1}(\Omega)} +\left \| \frac{a(y + \rho z_\rho)-a(y)}{\rho} \nabla y_\rho \right \|_{L^2(\Omega)}.
    \end{equation*}
    This together with \cref{ass:quasi_func1} gives
    \begin{equation*}
        \begin{aligned}
            a_0 \| \nabla z_\rho \|_{L^2(\Omega)} \leq \| v \|_{H^{-1}(\Omega)} + C_M \left \| |z_\rho| \nabla y_\rho \right \|_{L^2(\Omega)}
        \end{aligned}
    \end{equation*}
    with $M:= \sup\{ \|y\|_{C(\overline{\Omega})}, \|y_\rho\|_{C(\overline{\Omega})}: \rho \in (0,\hat \rho) \}$ for $\hat \rho$ small enough. Hölder's inequality and the embedding $L^p(\Omega) \hookrightarrow H^{-1}(\Omega)$ yield that
    \begin{equation}
        \label{eq:bound2}
        a_0 \| \nabla z_\rho \|_{L^2(\Omega)} \leq C(\Omega, p)\| v \|_{L^p(\Omega)} + C_M |\Omega|^{1/s_2} \left \| z_\rho \right \|_{L^{s_1}(\Omega)} \left \| \nabla y_\rho \right \|_{L^{\bar s}(\Omega)}
    \end{equation}
    with $s_1, s_2$ defined as in \eqref{eq:s1} and some constant $C(\Omega, p)$.

    We now show the boundedness of $\{z_\rho\}$ in $H^1_0(\Omega)$ by an indirect proof that is based on arguments similar to the ones in the proof of estimate \eqref{eq:apriori_DD}. Assume that $\{z_\rho\}$ is not bounded in $H^1_0(\Omega)$. A subsequence $\{\rho_k\}$ then exists such that $\rho_k \to 0^+$ and $\| \nabla z_{\rho_k} \|_{L^2(\Omega)} \to \infty$. By virtue of \eqref{eq:bound2}, it thus holds that $ \| z_{\rho_k} \|_{L^{s_1}(\Omega)} \to \infty$.

    Again, setting $ t_k:= \frac{1}{\|z_{\rho_k} \|_{L^{s_1}(\Omega)}}$, $\sigma_k := \frac{\rho_k}{t_k}$, and $\hat z_k := t_k z_{\rho_k}$
    yields that
    \begin{equation*}
        t_k \to 0, \quad y_{\rho_k} = y + \sigma_k \hat z_k, \quad \text{and} \quad \| \hat z_k \|_{L^{s_1}(\Omega)} = 1.
    \end{equation*}
    Due to \cref{thm:regu}, $y_{\rho_k} \to y$ in $ C(\overline{\Omega})$ and so in $L^{s_1}(\Omega)$. It therefore holds that $\sigma_k \to 0^+$.
    On the other hand, $\hat z_k$ satisfies
    \begin{equation}
        \label{eq:dd_sigma}
        \left\{
            \begin{aligned}
                -\div\left[ \frac{a(y + \sigma_k \hat z_k)-a(y)}{\sigma_k} \nabla y_{\rho_k} + a(y)\nabla \hat z_k + \frac{b(\nabla y + \sigma_k \nabla \hat z_k) - b(\nabla y)}{\sigma_k} \right] & = t_k v && \text{in } \Omega\\
                \hat z_k &= 0 && \text{on } \partial\Omega.
            \end{aligned}
        \right.
    \end{equation}
    From this, we obtain the boundedness of $\{\hat z_k\}$ in $H^1_0(\Omega)$. We can then extract a subsequence, again denoted by $\{\hat z_k\}$, such that $\hat z_k \rightharpoonup \hat z$ in $H^1_0(\Omega)$, $\hat z_k \to \hat z$ in $L^{s_1}(\Omega)$, $\hat z_k(x) \to \hat z(x)$, and $|\hat z_k(x)| \leq g(x)$ for all $k \in \N$, for a.e. $x\in \Omega$, and for some $g \in L^{s_1}(\Omega)$. Since $a:\R\to\R$ satisfies \cref{ass:quasi_func1}, we have as a result of \cref{lem:Lip-Hadamard} that
    \begin{equation*}
        \frac{a(y(x) + \sigma_k \hat z_k(x))-a(y(x))}{\sigma_k} \to a'(y(x); \hat z(x))\quad \text{for a.e. }x \in \Omega.
    \end{equation*}
    Furthermore, \cref{ass:quasi_func1} also gives
    \begin{equation*}
        \left| \frac{a(y(x) + \sigma_k \hat z_k(x)-a(y(x))}{\sigma_k} \right| \leq C_M |\hat z_k(x)| \leq C_M g(x)\quad \text{for a.e. }x \in \Omega.
    \end{equation*}
    The Lebesgue dominated convergence theorem thus implies that
    \begin{equation} \label{eq:limit1}
        \frac{a(y + \sigma_k \hat z_k)-a(y)}{\sigma_k} \to a'(y; \hat z) \quad \text{in } L^{s_1}(\Omega).
    \end{equation}
    Rewriting
    \begin{equation*}
        \begin{aligned}
            \frac{b(\nabla y + \sigma_k \nabla \hat z_k) - b(\nabla y)}{\sigma_k} & = \frac{b(\nabla y_{\rho_k}) - b(\nabla y)}{\sigma_k} \\
            & = J_b(\nabla y + \theta_k (\nabla y_{\rho_k} - \nabla y) )\nabla \hat z_k, \quad (\theta_k := \theta_k(x) \in (0,1)),
        \end{aligned}
    \end{equation*}
    using the fact that $y_{\rho_k} \to y$ in $W^{1,\bar s}_0(\Omega)$ and the boundedness of $J_b$, we now apply the Lebesgue dominated convergence theorem for a subsequence, which is denoted in the same way, to obtain
    \begin{equation} \label{eq:limit2}
        \frac{b(\nabla y + \sigma_k \nabla \hat z_k) - b(\nabla y)}{\sigma_k} \rightharpoonup J_b(\nabla y) \nabla\hat z \quad \text{in } L^{s_3}(\Omega)
    \end{equation} for any $s_3 \in [1,2)$. Letting $k \to \infty$ in equation \eqref{eq:dd_sigma} and using limits \eqref{eq:limit1} and \eqref{eq:limit2} yields that
    \begin{equation*}
        \left \{
            \begin{aligned}
                -\div \left [\left(a(y)\Id + J_b(\nabla y) \right) \nabla \hat z + a'(y; \hat z) \nabla y \right] &= 0 && \text{in } \Omega, \\
                \hat z &=0 && \text{on } \partial\Omega.
            \end{aligned}
        \right.
    \end{equation*}
    The uniqueness of solutions implies that $\hat z = 0$, a contradiction of the fact that $\| \hat z \|_{L^{s_1}(\Omega)} = \lim_{k \to \infty}\| \hat z_k \|_{L^{s_1}(\Omega)} =1$.

    Having proved the boundedness of $\{z_\rho\}$ in $H^1_0(\Omega)$, we can assume that $z_\rho \rightharpoonup z$ in $H^1_0(\Omega)$ and $z_\rho \to z$ in $L^{s_1}(\Omega)$. From this and standard arguments as above, we obtain the desired conclusion.
\end{proof}

\subsection{Regularization of the control-to-state operator}

To derive C-stationarity conditions, we apply the adapted penalization method of Barbu \cite{Barbu1984}.
We consider a regularization of the state equation via a classical mollification of the non-smooth nonlinearity.
Let $\psi$ be a non-negative function in $C^\infty_0(\R)$ such that $\mathrm{supp}(\psi) \subset [-1,1]$, $\int_{\R} \psi(\tau)d\tau = 1$
and define the family $\{a_\epsilon\}_{\epsilon>0}$ of functions
\begin{equation}
    \label{eq:a_epsilon}
    a_\epsilon := \frac{1}{\epsilon}a * \left(\psi\circ (\epsilon^{-1}{\Id})\right),
\end{equation}
where $f *g$ stands for the convolution of $f$ and $g$.
Then, $a_\epsilon \in C^\infty(\R)$ by a standard result.
In addition, a simple calculation shows that
\begin{equation} \label{eq:ass1}
    a_\epsilon(\tau) \geq a_0 \quad \forall \tau \in \R.
\end{equation}
Moreover, for any $M>0$,
\begin{align} \label{eq:a_regu}
    |a_\epsilon(\tau) - a(\tau) | &\leq C_{M+1}\epsilon && \text{for all} \quad \tau \in \R, \epsilon \in (0,1)
    \shortintertext{and}
    |a_\epsilon (\tau_1) - a_\epsilon(\tau_2) | &\leq C_{M+1} |\tau_1 - \tau_2 | && \text{for all} \quad \tau_i \in \R, |\tau_i | \leq M, i=1,2, \epsilon \in (0,1)\label{eq:a_regu_Lip}
\end{align}
with $C_{M+1}$ given in \cref{ass:quasi_func1}. We now consider the regularized equation
\begin{equation} \label{eq:RE_state}
    \left\{
        \begin{aligned}
            -\div [a_\epsilon(y)\nabla y + b(\nabla y)] &= u && \text{in } \Omega, \\
            y &=0 && \text{on } \partial\Omega.
        \end{aligned}
    \right.
\end{equation}
Since $a_\epsilon$ satisfies \cref{ass:quasi_func1}, \cref{thm:existence} yields that equation \eqref{eq:RE_state} admits for each $u \in L^p(\Omega)$ with $p > N/2$ a unique solution $y \in H^1_0(\Omega) \cap C(\overline{\Omega})$.

In the sequel, for each $\epsilon>0$, we denote by $S_\epsilon:L^p(\Omega)\to H^1_0(\Omega)$ the solution operator of \eqref{eq:RE_state}, which by \cref{thm:existence} satisfies the a priori estimate
\begin{equation} \label{eq:RE_apriori}
    \|S_\epsilon(u)\|_{H^{1}_0(\Omega)} + \| S_\epsilon(u)\|_{C(\overline{\Omega})} \leq C_\infty \|u\|_{L^p(\Omega)} \quad \text {for all } u \in L^p(\Omega)
\end{equation}
with the constant $C_\infty$ defined as in \cref{thm:existence} corresponding to $p^* := \tilde p$, where $\tilde{p}$ is defined as in \eqref{eq:p_star}. The following regularity of solutions is a direct consequence of \cref{thm:regu}.
\begin{corollary} \label{cor:RE_regu}
    Assume that \cref{ass:quasi_func1,ass:quasi_func2} hold true. Let $U$ and $\bar s$ be defined as in \eqref{eq:U_set}. Then, the following assertions are valid.
    \begin{itemize}
        \item[(i)] If $u \in U$, then $S_\epsilon(u) \in W^{1,\bar s}_0(\Omega)$ and
            \begin{equation} \label{eq:RE_regu}
                \|S_\epsilon(u)\|_{W^{1,\bar s}_0{(\Omega)}} \leq C_4
            \end{equation}
            for some constant $C_4$ depending only on $a_0, \Omega, N, p$, and $U$.
        \item[(ii)] If $u_n \rightharpoonup u$ in $L^p(\Omega)$ with $u_n, u \in U$, then $S_\epsilon(u_n) \to S_\epsilon(u)$ in $W^{1,\bar s}_0(\Omega) \cap C(\overline{\Omega})$.
    \end{itemize}
\end{corollary}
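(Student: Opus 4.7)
The plan is to verify that \cref{thm:regu} applies uniformly in $\epsilon \in (0,1)$ to the regularized equation \eqref{eq:RE_state}, since the hypotheses on $a$ in \cref{ass:quasi_func1} carry over to $a_\epsilon$ with constants independent of $\epsilon$.

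First, I would note that by \eqref{eq:ass1} and \eqref{eq:a_regu_Lip}, each $a_\epsilon$ satisfies \cref{ass:quasi_func1} with the \emph{same} lower bound $a_0$ and with a Lipschitz constant on $[-M,M]$ bounded by $C_{M+1}$ (which comes from \cref{ass:quasi_func1} applied to $a$ itself) for every $\epsilon \in (0,1)$. Since $b$ is unchanged, \cref{ass:quasi_func2} also holds. Hence \cref{thm:existence,thm:regu} are both applicable to \eqref{eq:RE_state}.

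For assertion (i), I would apply \cref{thm:regu}(i) with $p^* := \tilde p$ to \eqref{eq:RE_state}. Since $U \subset L^p(\Omega)$ is bounded and $L^p(\Omega)\hookrightarrow W^{-1,\tilde p}(\Omega)$, the image $U$ is bounded in $W^{-1,\tilde p}(\Omega)$. The a priori estimate \eqref{eq:RE_apriori} therefore furnishes a uniform $C(\overline\Omega)$-bound $M$ on $S_\epsilon(u)$ for all $u\in U$ and all $\epsilon\in(0,1)$. Tracing the proof of \cref{thm:regu}, the exponent $s$ and the constant $C_1$ depend only on $a_0$, the Lipschitz constant of the coefficient on $[-M,M]$ (which for us is the uniform $C_{M+1}$), $\Omega$, $N$, $p^*=\tilde p$, and the $W^{-1,\tilde p}$-bound of $U$. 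All of these quantities are independent of $\epsilon$, and $\bar s \in (N,s)$ is fixed by \eqref{eq:U_set}. This gives \eqref{eq:RE_regu} with a constant $C_4$ depending only on $a_0,\Omega,N,p,U$.

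For assertion (ii), the key is the compact embedding $L^p(\Omega)\Subset W^{-1,\tilde p}(\Omega)$ guaranteed by \eqref{eq:p_star}: if $u_n\weakto u$ in $L^p(\Omega)$ then $u_n\to u$ strongly in $W^{-1,\tilde p}(\Omega)$. Applying \cref{thm:regu}(ii) to the regularized equation (with $a$ replaced by $a_\epsilon$) then yields $S_\epsilon(u_n)\to S_\epsilon(u)$ in $W^{1,r}_0(\Omega)\cap C(\overline\Omega)$ for every $1\le r < s$, and in particular for $r=\bar s$ since $\bar s < s$.

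The only mild obstacle is bookkeeping: one must check that every constant used in the proofs of \cref{thm:existence,thm:regu} depending on $a$ in fact depends only on $a_0$ and on the local Lipschitz constant (and \emph{not}, say, on any modulus of regularity of $a$), so that replacing $a$ by $a_\epsilon$ gives identical estimates with constants uniform in $\epsilon\in(0,1)$. Since the proofs of \cref{thm:existence,thm:regu} use only these two quantities together with the Kirchhoff transform (whose derivative $a_\epsilon$ is still bounded below by $a_0$), this uniformity is immediate, and no further argument is required.
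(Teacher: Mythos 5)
Your argument is correct and matches the paper's intent: the paper gives no explicit proof, stating only that the corollary is a ``direct consequence of \cref{thm:regu},'' and you have supplied exactly the bookkeeping that claim presupposes. The key observations — that \eqref{eq:ass1} and \eqref{eq:a_regu_Lip} give $a_\epsilon$ the same lower bound $a_0$ and a local Lipschitz constant $C_{M+1}$ uniform in $\epsilon\in(0,1)$, that the uniform $C(\overline\Omega)$-bound on $S_\epsilon(u)$ over $U$ comes from \eqref{eq:RE_apriori}, that the Meyers-type exponent $\delta$ and hence $s$ in \cref{thm:regu} depend on $a$ only through $a_0$ and the upper ellipticity bound $\hat M$ (both uniformly controlled), and that $L^p(\Omega)\Subset W^{-1,\tilde p}(\Omega)$ upgrades weak $L^p$-convergence of $u_n$ to strong $W^{-1,\tilde p}$-convergence so that \cref{thm:regu}(ii) applies with $r=\bar s<s$ — are precisely the points one must check, and you check them.
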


We now study the differentiability of $S_\epsilon$. For $y \in W^{1,\bar s}_0{(\Omega)}$, we consider the equation
\begin{equation} \label{eq:RE_dire_deri}
    \left\{
        \begin{aligned}
            -\div \left [\left(a_\epsilon(y)\Id + J_b(\nabla y) \right) \nabla z + a'_\epsilon(y) z \nabla y \right] & = v && \text{in } \Omega, \\
            z &= 0 && \text{on } \partial\Omega.
        \end{aligned}
    \right.
\end{equation}
The well-posedness of equation \eqref{eq:RE_dire_deri} is proven analogously to \cref{thm:dire_deri1,thm:dire_deri2}.
\begin{theorem} \label{thm:RE_dire_deri}
    Let \cref{ass:quasi_func1,ass:quasi_func2} hold and let $\bar s$ be defined as in \eqref{eq:U_set}. Assume that $y \in W^{1,\bar s}_0(\Omega)$. Then, for each $v \in H^{-1}(\Omega)$, equation \eqref{eq:RE_dire_deri} admits a unique solution $z \in H^1_0(\Omega)$.

    Moreover, if $\{y_n\}$ is bounded in $W^{1,\bar s}_0(\Omega)$ such that $\nabla y_n \to \nabla y$ in $L^2(\Omega)$ for some $y \in W^{1,\bar s}_0(\Omega)$, and $\{v_n\}$ is bounded $H^{-1}(\Omega)$, then there exists a constant $C_5$ independent of $\epsilon$ and $n\in\N$ such that
    \begin{equation} \label{eq:RE_apriori_DD}
        \|z_n\|_{H^1_0{(\Omega)}} \leq C_5
    \end{equation}
    for all solutions $z_n$ of \eqref{eq:RE_dire_deri} corresponding to $y_n$ and $v_n$.
\end{theorem}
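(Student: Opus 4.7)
The plan is to imitate the proofs of \cref{thm:dire_deri1,thm:dire_deri2}, substituting the smooth multiplier $a'_\epsilon(y)z$ for the directional term $a'(y;z)$ throughout, and to track carefully that every constant that appears can be bounded by the $\epsilon$-independent Lipschitz modulus $C_{M+1}$ furnished by \eqref{eq:a_regu_Lip}. The smoothness of $a_\epsilon$ actually makes some steps easier, since $z\mapsto a'_\epsilon(y)z$ is linear (not merely Lipschitz and positively homogeneous), so various subsequence extractions for weak limits of difference quotients are replaced by multiplication by a bounded coefficient.

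For uniqueness of solutions to \eqref{eq:RE_dire_deri}, I would take two solutions $z_1,z_2\in H^1_0(\Omega)$, set $M:=\|y\|_{C(\overline\Omega)}$, and introduce the auxiliary level sets $K_0$ and $K_\delta$ and the test function $z_\delta:=\min\{\delta,(z_2-z_1)^+\}$ exactly as in Step 1 of the proof of \cref{thm:dire_deri1}, using \eqref{eq:a_regu_Lip} to bound $|a'_\epsilon(y(x))|\leq C_{M+1}$ a.e.\ in place of the Hadamard-type Lipschitz bound from \cref{lem:Lip-Hadamard}. The same Hölder estimate on the right-hand side together with Poincaré's inequality then forces $|K_0|=0$, and similarly $y_1\geq y_2$, giving $z_1=z_2$.

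For existence, I would run the Leray--Schauder argument from Step 2 of the proof of \cref{thm:dire_deri1} verbatim: fix $\eta\in L^{s_1}(\Omega)$, solve the linear problem with source $v+\div[a'_\epsilon(y)\eta\,\nabla y]$, and define $T:\eta\mapsto z_\eta$. Coercivity of the principal part is guaranteed by \eqref{eq:ass1}, and the bound $\|a'_\epsilon(y)\eta\,\nabla y\|_{L^2(\Omega)}\leq C_{M+1}|\Omega|^{1/s_2}\|\eta\|_{L^{s_1}(\Omega)}\|\nabla y\|_{L^{\bar s}(\Omega)}$ yields the analogue of \eqref{eq:apriori4}. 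Continuity and compactness of $T:L^{s_1}(\Omega)\to L^{s_1}(\Omega)$ follow from the compact embedding $H^1_0(\Omega)\Subset L^{s_1}(\Omega)$. To verify the Leray--Schauder hypothesis, I would argue by contradiction with a normalized sequence $\hat\eta_k:=\eta_k/\|\eta_k\|_{L^{s_1}(\Omega)}$ as in \eqref{eq:apriori5}--\eqref{eq:deri3}; the resulting limit satisfies the homogeneous version of \eqref{eq:RE_dire_deri}, which by the uniqueness just established forces $\hat\eta=0$, contradicting $\|\hat\eta\|_{L^{s_1}(\Omega)}=1$.

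For the uniform estimate \eqref{eq:RE_apriori_DD}, I would reproduce the contradiction argument from the proof of part (i) of \cref{thm:dire_deri2}: assuming $\|z_n\|_{H^1_0(\Omega)}\to\infty$ and rescaling $\hat z_n:=z_n/\|z_n\|_{L^{s_1}(\Omega)}$, one passes to a weak $H^1_0$-limit $\hat z$ using the compact embedding into $L^{s_1}$. Here the convergence of the lower-order term is handled by writing $a'_\epsilon(y_n)\hat z_n$ directly as the product of the bounded sequence $c_n:=a'_\epsilon(y_n)$ (with $|c_n|\leq C_{M+1}$ a.e.\ by \eqref{eq:a_regu_Lip}) and $\hat z_n\to\hat z$ in $L^{s_1}$, so $c_n\hat z_n\rightharpoonup c\hat z$ in $L^{s_0}(\Omega)$ for $s_0^{-1}+\bar s^{-1}=1/2$; combined with $a_\epsilon(y_n)\to a_\epsilon(y)$ uniformly (again via \eqref{eq:a_regu_Lip}) and $J_b(\nabla y_n)\to J_b(\nabla y)$ in every $L^m$, the limit $\hat z$ solves the homogeneous equation obtained from \eqref{eq:RE_dire_deri}, whence $\hat z=0$, contradicting $\|\hat z\|_{L^{s_1}(\Omega)}=1$. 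The main obstacle, and the point that requires care, is $\epsilon$-independence of $C_5$: it is crucial that every quantitative bound used in the argument (coercivity constant $a_0$, upper bound on $a_\epsilon(y_n)$, and Lipschitz constant on $a'_\epsilon$) is controlled by the $\epsilon$-independent quantities $a_0$, $C_{M+1}M+a(0)+L_b$, and $C_{M+1}$ provided by \cref{ass:quasi_func1} and \eqref{eq:ass1}--\eqref{eq:a_regu_Lip}, so that the final estimate, as in \eqref{eq:zn_esti}, depends only on $M$, $\sup_n\|v_n\|_{H^{-1}(\Omega)}$, $a_0$, $\Omega$, $N$, and the exponents $\bar s,s_1,s_2$.
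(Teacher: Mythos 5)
Your proposal is correct and carries out precisely the adaptation that the paper itself gestures at with the one-line remark "proven analogously to \cref{thm:dire_deri1,thm:dire_deri2}": uniqueness via the level-set test function, existence via Leray--Schauder with the linearized source $v+\div[a'_\epsilon(y)\eta\,\nabla y]$, and the a priori bound via the normalization/contradiction argument, each time replacing the positively homogeneous directional term $a'(y;\cdot)$ by the linear multiplier $a'_\epsilon(y)\,\cdot$ and tracking that the constants involved ($a_0$, $C_{M+1}$, $C_{M+1}M+|a(0)|+L_b$) are $\epsilon$-independent by \eqref{eq:ass1}--\eqref{eq:a_regu_Lip}.

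One detail worth making explicit, since the statement claims $C_5$ is independent of both $\epsilon$ and $n$: a contradiction argument run for each fixed $\epsilon$ separately only yields a priori an $\epsilon$-dependent bound, so the contradictory unbounded sequence $\{z_n\}$ must be taken jointly over pairs $(\epsilon_j,n_j)$. When passing to the limit in the rescaled equation one then needs the convergence of the coefficient $a_{\epsilon_j}(y_{n_j})$; your phrasing "$a_\epsilon(y_n)\to a_\epsilon(y)$ uniformly" treats $\epsilon$ as fixed. For the joint sequence, if $\epsilon_j\to 0$ one uses \eqref{eq:a_regu} together with $y_{n_j}\to y$ in $C(\overline\Omega)$ to get $a_{\epsilon_j}(y_{n_j})\to a(y)$ uniformly; if $\epsilon_j\to\epsilon^*>0$ one uses continuity of $\epsilon\mapsto a_\epsilon$ coming from the convolution structure. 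In either case the limiting homogeneous equation has a coercive leading coefficient bounded below by $a_0$ and a lower-order coefficient bounded by $C_{M+1}$, so the uniqueness argument applies and the contradiction goes through — consistent with your concluding observation that all the relevant quantities are $\epsilon$-uniform. (Also, in the uniqueness paragraph, "$y_1\geq y_2$" should read "$z_1\geq z_2$", but this is clearly a typo.)
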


Also, similarly to \cref{thm:DD,cor:dire_deri}, we obtain the G\^{a}teaux differentiability of $S_\epsilon$.
\begin{theorem}
    Let \cref{ass:quasi_func1,ass:quasi_func2} hold. Assume that $U$ is given as in \eqref{eq:U_set}. Then, $S_\epsilon: U \to H^1_0(\Omega)$ is G\^{a}teaux differentiable. Moreover, for any $u \in U$ and $ v \in L^p(\Omega)$, the Gâteaux derivative $z := S'_\epsilon(u)v$ is the unique solution in $H^1_0(\Omega)$ of the equation
    \begin{equation} \label{eq:T_oper}
        \left\{
            \begin{aligned}
                -\div \left [\left(a_\epsilon(y)\Id + J_b(\nabla y) \right) \nabla z + a'_\epsilon(y) z \nabla y \right] &= v && \text{in } \Omega, \\
                z &=0 && \text{on } \partial\Omega,
            \end{aligned}
        \right.
    \end{equation}
    where $y := S_\epsilon(u)$.
\end{theorem}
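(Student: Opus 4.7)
The plan is to mirror the proof of \cref{thm:DD}, with two simplifications coming from the smoothness of $a_\epsilon$: the difference quotients of $a_\epsilon$ converge to a classical derivative (not merely a directional one), and consequently the limit equation is linear in both $v$ and $z$, which immediately upgrades directional to Gâteaux differentiability.

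I would fix $u \in U$ and $v \in L^p(\Omega)$, set $y := S_\epsilon(u)$, $y_\rho := S_\epsilon(u + \rho v)$ for $\rho$ in a small punctured neighborhood of $0$ (both signs), and define $z_\rho := \rho^{-1}(y_\rho - y)$. Subtracting the two regularized state equations and dividing by $\rho$ shows that $z_\rho$ solves
\begin{equation*}
    -\div\Bigl[\,\tfrac{a_\epsilon(y+\rho z_\rho) - a_\epsilon(y)}{\rho}\,\nabla y_\rho + a_\epsilon(y)\nabla z_\rho + \tfrac{b(\nabla y + \rho \nabla z_\rho) - b(\nabla y)}{\rho}\Bigr] = v
\end{equation*}
with zero boundary trace. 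Testing with $z_\rho$ and using \eqref{eq:ass1}, the monotonicity of $b$, the local Lipschitz bound \eqref{eq:a_regu_Lip} with $M := \sup\{\|y\|_{C(\overline\Omega)}, \|y_\rho\|_{C(\overline\Omega)} : |\rho| \leq \hat\rho\}$ (finite by \cref{cor:RE_regu}), and Hölder's inequality against the $W^{1,\bar s}_0$-bound on $\nabla y_\rho$ from \eqref{eq:RE_regu}, yields an estimate of the shape of \eqref{eq:bound2}.

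The main obstacle, as in \cref{thm:DD}, is to upgrade this to a bound of $\{z_\rho\}$ in $H^1_0(\Omega)$ uniform in $\rho$. I would do this by contradiction: assuming $\|z_{\rho_k}\|_{H^1_0(\Omega)} \to \infty$ forces $\|z_{\rho_k}\|_{L^{s_1}(\Omega)} \to \infty$, and the rescaling $t_k := 1/\|z_{\rho_k}\|_{L^{s_1}(\Omega)}$, $\hat z_k := t_k z_{\rho_k}$, $\sigma_k := \rho_k/t_k$ satisfies $t_k, \sigma_k \to 0$ (using $y_{\rho_k} \to y$ in $C(\overline\Omega)$ via \cref{cor:RE_regu}(ii)). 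The rescaled equation is bounded in $H^1_0(\Omega)$, and passing to a weak/a.e.\ limit with $a_\epsilon \in C^1$ (so $(a_\epsilon(y + \sigma_k \hat z_k) - a_\epsilon(y))/\sigma_k \to a'_\epsilon(y)\hat z$ in $L^{s_1}(\Omega)$ by dominated convergence) and the same Lebesgue-dominated argument on the $b$-term as in \eqref{eq:limit2} shows the weak limit $\hat z$ solves the homogeneous version of \eqref{eq:T_oper}. By the uniqueness part of \cref{thm:RE_dire_deri} this forces $\hat z = 0$, contradicting $\|\hat z\|_{L^{s_1}(\Omega)} = 1$.

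Once $\{z_\rho\}$ is bounded in $H^1_0(\Omega)$, a subsequence converges weakly in $H^1_0(\Omega)$ and strongly in $L^{s_1}(\Omega)$ to some $z$. The same dominated convergence arguments used above pass to the limit in the $z_\rho$-equation and show $z$ solves \eqref{eq:T_oper}. Uniqueness (from \cref{thm:RE_dire_deri}) ensures the whole net converges and that the limit is independent of the sign of $\rho$, so $S_\epsilon'(u;v)$ exists and equals the unique solution of \eqref{eq:T_oper}. Finally, since \eqref{eq:T_oper} is linear in $v$ and its solution depends linearly and boundedly on $v \in L^p(\Omega) \hookrightarrow H^{-1}(\Omega)$ (by the a priori estimate underlying \cref{thm:RE_dire_deri}), the map $v \mapsto S_\epsilon'(u;v)$ is linear and continuous, which together with the existence of $S_\epsilon'(u;v)$ in every direction gives Gâteaux differentiability.
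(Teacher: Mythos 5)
Your proposal is correct and follows essentially the same route the paper intends: the text states only that Gâteaux differentiability of $S_\epsilon$ is obtained ``similarly to \cref{thm:DD,cor:dire_deri}'', and you carry out precisely that mirroring, correctly identifying that the smoothness of $a_\epsilon$ makes the limit equation \eqref{eq:T_oper} linear in $z$ (and in $v$), which — combined with the uniform a priori bound \eqref{eq:RE_apriori_DD} and the uniqueness part of \cref{thm:RE_dire_deri} — upgrades the one-sided directional derivative to a bounded linear Gâteaux derivative.
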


For later use in \cref{sec:reg} (see \cref{thm:OS}) we also need the uniform boundedness of solutions to \eqref{eq:T_oper}. For this purpose, we define the operator $T_{y, \epsilon}: H^1_0(\Omega) \to H^{-1}(\Omega)$ by
\begin{equation*}
    T_{y,\epsilon} w := -\div \left [\left(a_\epsilon(y)\Id + J_b(\nabla y) \right) \nabla w + a'_\epsilon(y) w \nabla y \right].
\end{equation*}
\begin{proposition} \label{prop:T_iso}
    Let $\bar s$ be defined as in \eqref{eq:U_set}.
    For any $y \in W^{1,\bar s}_0(\Omega)$ and $\epsilon >0$, the operator $T_{y, \epsilon}: H^1_0(\Omega) \to H^{-1}(\Omega)$ is an isomorphism.
    Moreover, if $\{y_\epsilon\}$ is bounded in $W^{1,\bar s}_0(\Omega)$ such that $\nabla y_\epsilon \to \nabla y$ in $L^2(\Omega)$ for some $y \in W^{1,\bar s}_0(\Omega)$, then
    \begin{equation} \label{eq:T_esti}
        \sup\left\{ \| T^{-1}_{y_\epsilon, \epsilon} \|_{\mathcal L(H^{-1}(\Omega), H^1_0(\Omega))} \mid \epsilon \in (0,1) \right \} < \infty.
    \end{equation}
\end{proposition}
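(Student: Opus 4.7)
The plan is to reduce both assertions to the existence-uniqueness result and the uniform a priori estimate already packaged in \cref{thm:RE_dire_deri}.

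For the isomorphism statement, fix $y\in W^{1,\bar s}_0(\Omega)$ and $\epsilon>0$. Linearity of $T_{y,\epsilon}$ is immediate from its definition. Boundedness rests on the Sobolev embedding $W^{1,\bar s}_0(\Omega)\hookrightarrow C(\overline\Omega)$ (since $\bar s>N$), which makes $a_\epsilon(y)$ and $a'_\epsilon(y)$ uniformly bounded on $\Omega$---with $\|a'_\epsilon(y)\|_{L^\infty(\Omega)}\le C_{M+1}$ by \eqref{eq:a_regu_Lip}---together with $|J_b(\nabla y)|\le L_b$, Hölder's inequality with the triple of exponents $(\bar s, s_1, s_2)$ from \eqref{eq:s1}, and the Sobolev embedding $H^1_0(\Omega)\hookrightarrow L^{s_1}(\Omega)$. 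Bijectivity is exactly the existence and uniqueness part of \cref{thm:RE_dire_deri}: for every $v\in H^{-1}(\Omega)$ the equation $T_{y,\epsilon}z=v$ has a unique solution $z\in H^1_0(\Omega)$. The Banach isomorphism theorem then makes $T_{y,\epsilon}^{-1}$ continuous, which establishes the first assertion.

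For the uniform bound \eqref{eq:T_esti}, I would argue by contradiction. If the supremum were infinite, there would exist sequences $\epsilon_n\in(0,1)$ and $v_n\in H^{-1}(\Omega)$ with $\|v_n\|_{H^{-1}(\Omega)}\le 1$ such that $z_n := T^{-1}_{y_{\epsilon_n},\epsilon_n}v_n$, the unique solution of \eqref{eq:RE_dire_deri} with $y:=y_{\epsilon_n}$ and parameter $\epsilon_n$, satisfies $\|z_n\|_{H^1_0(\Omega)}\to\infty$. But the subsequence $\{y_{\epsilon_n}\}$ inherits boundedness in $W^{1,\bar s}_0(\Omega)$ and the $L^2$-gradient convergence $\nabla y_{\epsilon_n}\to\nabla y$ from the hypothesis on $\{y_\epsilon\}$, while $\{v_n\}$ is bounded in $H^{-1}(\Omega)$. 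The second part of \cref{thm:RE_dire_deri} then supplies a constant $C_5$, independent of both $\epsilon_n$ and $n$, with $\|z_n\|_{H^1_0(\Omega)}\le C_5$---a contradiction.

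I do not anticipate any substantial obstacle: the heavy lifting, namely the blow-up/contradiction argument analogous to those in the proofs of \cref{thm:dire_deri2,thm:DD}, has already been absorbed into \cref{thm:RE_dire_deri}, so the present proposition is essentially its operator-theoretic reformulation. The only point to watch is to read off the uniformity in both $\epsilon$ and the right-hand side from the explicit ``independent of $\epsilon$ and $n$'' clause in \cref{thm:RE_dire_deri}, which is precisely what the contradiction argument exploits.
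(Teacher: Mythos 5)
Your proof is correct and takes essentially the same route as the paper: reduce everything to \cref{thm:RE_dire_deri}. The one place you diverge is the continuity of $T_{y,\epsilon}^{-1}$. Once $T_{y,\epsilon}$ is a bounded linear bijection between the Banach spaces $H^1_0(\Omega)$ and $H^{-1}(\Omega)$, you invoke the Banach isomorphism (open mapping) theorem and you are done. The paper instead re-proves this continuity by hand: for $v_n\to v$ in $H^{-1}(\Omega)$ it writes the equation for $z_n-z$, establishes boundedness of $\{z_n-z\}$ in $H^1_0(\Omega)$, extracts a weakly convergent subsequence, identifies the limit as $0$ via uniqueness, and then upgrades to strong convergence through the same estimate --- essentially repeating the machinery of \cref{thm:dire_deri2}. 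Your approach is the more economical one, since the explicit estimate in the paper carries no extra information that the open mapping theorem would not give; the paper's version only has the cosmetic advantage of staying closer to the concrete PDE estimates used throughout that section. For \eqref{eq:T_esti}, the paper states tersely that it ``follows from \eqref{eq:RE_apriori_DD}''; your contradiction argument is a correct and natural unpacking of that claim, relying on the same uniformity-in-$\epsilon$-and-$n$ clause of \cref{thm:RE_dire_deri}.
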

\begin{proof}
    From the definition, we immediately deduce that $T_{y, \epsilon}$ is continuous. Moreover, \cref{thm:RE_dire_deri} yields that $T_{y, \epsilon}$ is bijective, while the estimate \eqref{eq:T_esti} follows from \eqref{eq:RE_apriori_DD}.

    It remains to prove that $T_{y,\epsilon}^{-1}$ is continuous.
    Let $v_n \to v$ in $ H^{-1}(\Omega)$ and set $z_n := T_{y,\epsilon}^{-1}v_n$, $z := T_{y,\epsilon}^{-1}v$. It is easy to see that $z_n$ and $ z$ satisfy the equation
    \begin{equation} \label{eq:T_con}
        \left \{
            \begin{aligned}
                -\div \left [\left(a_\epsilon(y)\Id + J_b(\nabla y) \right) \nabla (z_n-z) + a'_\epsilon(y) (z_n-z) \nabla y \right] &= v_n-v && \text{in } \Omega, \\
                z_n-z &=0 && \text{on } \partial\Omega,
            \end{aligned}
        \right.
    \end{equation}
    which, together with \eqref{eq:a_regu_Lip}, implies that
    \begin{equation} \label{eq:T_esti1}
        \|\nabla (z_n - z)\|_{L^2(\Omega)} \leq \frac{1}{a_0} \left(\| v_n - v \|_{H^{-1}(\Omega)} + C_{M+1} |\Omega|^{1/s_2}\|z_n - z \|_{L^{s_1}(\Omega)}\| \nabla y\|_{L^{\bar s}(\Omega)} \right),
    \end{equation}
    where $M:= \| y\|_{C(\overline{\Omega})}$ and $s_1, s_2$ are defined as in \eqref{eq:s1}. The same argument as in \cref{thm:dire_deri2} then implies that $\|\nabla (z_n - z)\|_{L^2(\Omega)}$ is bounded. We can thus extract a subsequence, also denoted by $\{z_n - z\} $, such that $z_n - z \rightharpoonup \tilde{z}$ in $H^1_0(\Omega)$ and $z_n - z \to \tilde{z}$ in $L^{s_1}(\Omega)$  for some $\tilde{z} \in H^1_0(\Omega)$. Letting $n \to \infty$ in \eqref{eq:T_con} yields
    \begin{equation*}
        \left\{
            \begin{aligned}
                -\div \left [\left(a_\epsilon(y)\Id + J_b(\nabla y) \right) \nabla \tilde{z} + a'_\epsilon(y) \tilde{z} \nabla y \right] & = 0 && \text{in } \Omega, \\
                \tilde{z} &=0 && \text{on } \partial\Omega,
            \end{aligned}
        \right.
    \end{equation*}
    which together with the uniqueness of solutions indicates that $\tilde{z} = 0$. By virtue of this and the limit $z_n - z \to \tilde{z}$ in $L^{s_1}(\Omega)$, the estimate \eqref{eq:T_esti1} shows that $z_n \to z$ in $H^1_0(\Omega)$. Consequently, $T_{y,\epsilon}^{-1}$ is continuous.
\end{proof}

Since $T_{y, \epsilon}$ is isomorphic, so is its adjoint $T_{y,\epsilon}^*$, which immediately yields well-posedness of the regularized adjoint equation
\begin{equation} \label{eq:adjoint}
    \left\{
        \begin{aligned}
            -\div \left [\left(a_\epsilon(y)\Id + J_b(\nabla y)^T \right)\nabla w \right] + a'_\epsilon(y) \nabla y \cdot \nabla w &= v && \text{in } \Omega, \\
            w &=0 && \text{on } \partial\Omega,
        \end{aligned}
    \right.
\end{equation}
where $A^T$ stands for the transpose of matrix $A$.
\begin{corollary} \label{cor:adjoint_T}
    Let $\bar s$ be defined as in \eqref{eq:U_set}.
    Under \cref{ass:quasi_func1,ass:quasi_func2}, for any $y \in W^{1,\bar s}_0(\Omega)$, $v \in H^{-1}(\Omega)$, and $\epsilon > 0$, the equation \eqref{eq:adjoint}
    admits a unique solution $w \in H^1_0(\Omega)$.
\end{corollary}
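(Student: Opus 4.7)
The plan is to recognize that equation \eqref{eq:adjoint} is precisely the operator equation $T_{y,\epsilon}^* w = v$, and then invoke \cref{prop:T_iso} together with the fact that the adjoint of a Banach space isomorphism is again an isomorphism.

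First I would write out the weak form of \eqref{eq:adjoint}: for $w \in H^1_0(\Omega)$ a weak solution means
\begin{equation*}
    \int_\Omega (a_\epsilon(y)\Id + J_b(\nabla y)^T)\nabla w \cdot \nabla \phi \, dx + \int_\Omega (a'_\epsilon(y)\nabla y \cdot \nabla w)\phi \, dx = \langle v, \phi\rangle
\end{equation*}
for every $\phi \in H^1_0(\Omega)$. On the other hand, the operator $T_{y,\epsilon}$ from \cref{prop:T_iso} satisfies
\begin{equation*}
    \langle T_{y,\epsilon}\phi, w\rangle = \int_\Omega (a_\epsilon(y)\Id + J_b(\nabla y))\nabla \phi \cdot \nabla w \, dx + \int_\Omega a'_\epsilon(y)\phi \nabla y \cdot \nabla w \, dx.
\end{equation*}
Transposing the matrix in the first term and rearranging the second, this is exactly the left-hand side of the weak formulation tested against $\phi$. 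Hence the weak form of \eqref{eq:adjoint} reads $\langle T_{y,\epsilon}\phi, w\rangle = \langle v,\phi\rangle$ for all $\phi \in H^1_0(\Omega)$, which is the definition of $T_{y,\epsilon}^* w = v$ (identifying $(H^{-1}(\Omega))^*$ with $H^1_0(\Omega)$ via reflexivity).

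Next, \cref{prop:T_iso} asserts that $T_{y,\epsilon}: H^1_0(\Omega) \to H^{-1}(\Omega)$ is an isomorphism. Since $H^1_0(\Omega)$ and $H^{-1}(\Omega)$ are reflexive Banach spaces and the Banach adjoint of an isomorphism between reflexive spaces is again an isomorphism (between the corresponding duals, canonically identified as $H^1_0(\Omega)$ and $H^{-1}(\Omega)$), the operator $T_{y,\epsilon}^*: H^1_0(\Omega) \to H^{-1}(\Omega)$ is a bijective bounded linear map with bounded inverse. Therefore, for every $v \in H^{-1}(\Omega)$ the equation $T_{y,\epsilon}^* w = v$ has a unique solution $w \in H^1_0(\Omega)$, which by the equivalence established above is the unique weak solution of \eqref{eq:adjoint}.

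No serious obstacle is expected; the only thing to be careful about is the clean identification of the bilinear form associated with $T_{y,\epsilon}$ with the adjoint bilinear form, in particular the transposition $J_b(\nabla y)\mapsto J_b(\nabla y)^T$ and the swap of the roles of $w$ and the test function in the zeroth-order term. The rest is pure functional analysis using the result already proved in \cref{prop:T_iso}.
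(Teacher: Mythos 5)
Your proof is correct and matches the paper exactly: the paper's entire argument is the one-line remark preceding the corollary, namely that since $T_{y,\epsilon}$ is an isomorphism (by \cref{prop:T_iso}), so is $T_{y,\epsilon}^*$, and \eqref{eq:adjoint} is precisely $T_{y,\epsilon}^* w = v$. You have simply spelled out the weak-form identification (transposition of $J_b(\nabla y)$ and swap of roles in the zeroth-order term) that the paper leaves implicit.
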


Finally, we address convergence of $S_\epsilon$ to $S$ as $\epsilon\to 0$.
\begin{proposition} \label{lem:cont}
    If $u_\epsilon \rightharpoonup u$ in $L^p(\Omega)$ with $u_\epsilon, u \in U$, then $S_\epsilon(u_\epsilon) \to S(u)$ in $H^1_0(\Omega) \cap C(\overline{\Omega})$.
\end{proposition}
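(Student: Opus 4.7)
The plan is to mirror the strategy of \cref{thm:regu}(ii), with the added complication that both the nonlinearity $a_\epsilon$ and the datum $u_\epsilon$ now depend on the small parameter. Write $y_\epsilon := S_\epsilon(u_\epsilon)$ and $y := S(u)$. By \cref{cor:RE_regu}(i), $\{y_\epsilon\}$ is bounded in $W^{1,\bar s}_0(\Omega)$ uniformly in $\epsilon$. The compact embeddings $L^p(\Omega) \Subset W^{-1,\tilde p}(\Omega) \hookrightarrow H^{-1}(\Omega)$ and $W^{1,\bar s}_0(\Omega) \Subset C(\overline\Omega)$ (applicable since $\bar s > N$) then let me extract a subsequence along which $u_\epsilon \to u$ in $H^{-1}(\Omega)$, $y_\epsilon \weakto \tilde y$ in $W^{1,\bar s}_0(\Omega)$, and $y_\epsilon \to \tilde y$ in $C(\overline\Omega)$. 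Combining the uniform estimate \eqref{eq:a_regu} with the local Lipschitz property in \cref{ass:quasi_func1}, I then obtain $a_\epsilon(y_\epsilon) \to a(\tilde y)$ in $C(\overline\Omega)$.

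Next I pass to the limit in the regularized state equation by rewriting it as
\begin{equation*}
    -\div\bigl[(a_\epsilon(y_\epsilon) - a_0/2)\nabla y_\epsilon\bigr] + B(y_\epsilon) = u_\epsilon
\end{equation*}
with $B(z) := -\div[(a_0/2)\nabla z + b(\nabla z)]$. The global Lipschitz continuity of $b$ and the uniform $H^1_0$-bound on $\{y_\epsilon\}$ yield boundedness of $\{B(y_\epsilon)\}$ in $H^{-1}(\Omega)$; extracting a further subsequence with $B(y_\epsilon) \weakto \psi$ and letting $\epsilon\to 0$ I obtain $-\div[(a(\tilde y) - a_0/2)\nabla \tilde y] + \psi = u$. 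Repeating the $\liminf/\limsup$ computation from the proof of \cref{thm:regu}(ii), using the strong convergence $u_\epsilon \to u$ in $H^{-1}(\Omega)$, the weak convergence $y_\epsilon \weakto \tilde y$ in $H^1_0(\Omega)$, and the uniform convergence of $a_\epsilon(y_\epsilon)$, I arrive at $\limsup_\epsilon \langle B(y_\epsilon), y_\epsilon\rangle \leq \langle \psi, \tilde y\rangle$; the maximal monotonicity of $B$ together with its strong-to-weak closedness then forces $\psi = B(\tilde y)$. Thus $\tilde y$ solves \eqref{eq:state} for the datum $u$, so $\tilde y = y$ by \cref{thm:existence}. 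Since every weakly convergent subsequence of $\{y_\epsilon\}$ has the same limit $y$, the full sequence converges weakly in $W^{1,\bar s}_0(\Omega)$ and hence strongly in $C(\overline\Omega)$.

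For the strong $H^1_0$-convergence, I subtract the state equation for $y$ (with coefficient $a$) from that for $y_\epsilon$ (with coefficient $a_\epsilon$), test with $y_\epsilon - y$, and use the monotonicity of $b$ together with $a_\epsilon \geq a_0$ to deduce
\begin{equation*}
    a_0 \|\nabla(y_\epsilon - y)\|_{L^2(\Omega)}^2 \leq \bigl|\langle u_\epsilon - u, y_\epsilon - y\rangle\bigr| + \|a_\epsilon(y_\epsilon) - a(y)\|_{L^\infty(\Omega)} \|\nabla y\|_{L^2(\Omega)} \|\nabla(y_\epsilon - y)\|_{L^2(\Omega)}.
\end{equation*}
The first summand vanishes by the strong convergence $u_\epsilon \to u$ in $H^{-1}(\Omega)$ and the boundedness of $\{y_\epsilon - y\}$ in $H^1_0(\Omega)$; the second vanishes by the uniform convergence $a_\epsilon(y_\epsilon) \to a(y)$ in $C(\overline\Omega)$ already established. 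The main obstacle, as in \cref{thm:regu}(ii), is passing to the limit in the nonlinear higher-order term $-\div b(\nabla y_\epsilon)$ given only weak convergence of $\nabla y_\epsilon$ in $L^2(\Omega)$, which is precisely why the argument must be organized around isolating the maximally monotone operator $B$ and treating the residual $(a_\epsilon(y_\epsilon) - a_0/2)\nabla y_\epsilon$ as a lower-order perturbation amenable to the uniform convergence in $C(\overline\Omega)$.
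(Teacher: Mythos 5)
Your proof is correct, but it takes a genuinely different and considerably longer route than the paper's. You re-run the entire limit-passing machinery from the proof of \cref{thm:regu}(ii): extract weakly convergent subsequences, isolate the maximally monotone operator $B(z)=-\div[(a_0/2)\nabla z+b(\nabla z)]$, prove the $\limsup$ inequality, invoke strong-to-weak closedness to identify the limit, and then upgrade to strong $H^1_0$ convergence by testing the difference of the state equations. The paper instead uses a much shorter change of perspective: it writes the regularized equation in the form
\begin{equation*}
    -\div\bigl[a(y_\epsilon)\nabla y_\epsilon + b(\nabla y_\epsilon)\bigr] = u_\epsilon + v_\epsilon,
    \qquad
    v_\epsilon := \div\bigl[(a_\epsilon(y_\epsilon)-a(y_\epsilon))\nabla y_\epsilon\bigr],
\end{equation*}
so that $y_\epsilon = S(u_\epsilon + v_\epsilon)$ for the \emph{unregularized} solution operator $S$, estimates $\|v_\epsilon\|_{W^{-1,\bar s}(\Omega)} \le C_{M+1}M\epsilon \to 0$ via \eqref{eq:a_regu} and the uniform $W^{1,\bar s}_0$-bound, observes $u_\epsilon+v_\epsilon\to u$ in $W^{-1,s_0}(\Omega)$ with $s_0=\min\{\tilde p,\bar s\}>N$, and concludes directly from \cref{cor:continuity}. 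The paper's approach thus reduces the statement to an already-proved continuity result, avoiding any re-use of the monotone-operator argument; yours is self-contained but essentially re-derives a chunk of \cref{thm:regu}(ii). One cosmetic point: since weak convergence $u_\epsilon \weakto u$ in $L^p(\Omega)$ plus the compact embedding $L^p(\Omega)\Subset W^{-1,\tilde p}(\Omega)$ already gives strong convergence of the \emph{full} sequence $u_\epsilon\to u$ in $H^{-1}(\Omega)$, there is no need to phrase that as a subsequence extraction.
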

\begin{proof}
    Set $y_\epsilon := S_\epsilon(u_\epsilon)$.
    Then by \cref{cor:RE_regu}, a constant $M>0$ exists such that
    \begin{equation}
        \|y_\epsilon\|_{C(\overline{\Omega})} + \|y_\epsilon\|_{W^{1,\bar s}_0({\Omega})} \leq M \quad \text{for all} \quad \epsilon > 0
    \end{equation}
    with $\bar s$ given in \eqref{eq:U_set}.
    On the other hand, we have
    \begin{equation*}
        \left\{
            \begin{aligned}
                -\div \left [a_\epsilon(y_\epsilon)\nabla y_\epsilon + b(\nabla y_\epsilon) \right] &= u_\epsilon && \text{in } \Omega, \\
                y_\epsilon &=0 && \text{on } \partial\Omega.
            \end{aligned}
        \right.
    \end{equation*}
    Rewriting this equation as
    \begin{equation*}
        \left\{
            \begin{aligned}
                -\div \left [a(y_\epsilon)\nabla y_\epsilon + b(\nabla y_\epsilon) \right] &= u_\epsilon + v_\epsilon && \text{in } \Omega, \\
                y_\epsilon &=0 && \text{on } \partial\Omega
            \end{aligned}
        \right.
    \end{equation*}
    with $v_\epsilon := \div \left [\left(a_\epsilon(y_\epsilon) - a(y_\epsilon) \right) \nabla y_\epsilon \right] $, we then have $y_\epsilon = S(u_\epsilon + v_\epsilon)$. In addition, we deduce from \eqref{eq:a_regu} that
    \begin{equation}
        \begin{aligned}[t]
            \| v_\epsilon\|_{W^{-1,\bar s}(\Omega)} & = \sup\left\{ \int_{\Omega} \left(a_\epsilon(y_\epsilon) - a(y_\epsilon) \right) \nabla y_\epsilon \cdot \nabla \varphi dx \mid \| \varphi \|_{W_0^{1,\bar s'}(\Omega)} \leq 1, \bar s' = \frac{\bar s}{\bar s -1} \right \} \\
            & \leq C_{M+1} \epsilon \| \nabla y_\epsilon \|_{L^{\bar s}(\Omega)} \\
            & \leq C_{M+1} M \epsilon.
        \end{aligned}
    \end{equation}
    It follows that $v_\epsilon \to 0$ in $W^{-1,\bar s}(\Omega)$ as $\epsilon \to 0^+$. Since $L^p(\Omega) \Subset W^{-1,\tilde{p}}(\Omega)$ is compact, we have $u_\epsilon \to u$ in $W^{-1,\tilde{p}}(\Omega)$ and so $u_\epsilon + v_\epsilon \to u$ in $W^{-1,s_0}(\Omega)$ with $s_0 := \min\{\tilde{p},\bar s\} >N$. \cref{cor:continuity} then implies that $y_\epsilon = S(u_\epsilon + v_\epsilon) \to S(u) =y$ in $H^1_0(\Omega) \cap C(\overline{\Omega})$, as claimed.
\end{proof}

\section{Existence and optimality conditions} \label{sec:OS}

We now turn to the optimal control problem \eqref{eq:P}, which we recall is given as
\begin{equation}\label{eq:P2}
    \tag{P}
    \left\{
        \begin{aligned}
            \min_{u\in L^p(\Omega), y\in H^1_0(\Omega)} &J(y,u) \\
            \text{s.t.} \quad
            &\begin{aligned}[t]
                -\div [a(y)\nabla y + b(\nabla y)] &= u &&\text{in } \Omega\\
                y &= 0 &&\text{on } \partial\Omega.
            \end{aligned}
        \end{aligned}
    \right.
\end{equation}
It will frequently be useful to rewrite problem \eqref{eq:P2} using the control-to-state operator $S$ in the reduced form
\begin{equation} \label{eq:P21}
    \min_{u\in L^p(\Omega)} j(u) := J(S(u),u).
\end{equation}

We first address the existence of minimizers.
\begin{proposition}
    \label{prop:existence}
    Under \crefrange{ass:quasi_func1}{ass:cost_func}, there exists a minimizer $(\bar y, \bar u) \in H^1_0(\Omega) \times L^p(\Omega)$ of \eqref{eq:P2}.
\end{proposition}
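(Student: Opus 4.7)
The plan is to use the direct method of the calculus of variations on the reduced problem \eqref{eq:P21}. Let $\{u_n\} \subset L^p(\Omega)$ be a minimizing sequence with $y_n := S(u_n)$, so that $J(y_n, u_n) \to \inf_{u \in L^p(\Omega)} j(u)$. Since $j(0) = J(S(0), 0)$ is finite by continuity of $S$ and $J$, we may assume $J(y_n, u_n) \leq J(0, 0) + 1 =: C_0$ for all $n$.

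First I would establish boundedness of $\{u_n\}$ in $L^p(\Omega)$. The continuous embedding $L^p(\Omega) \hookrightarrow W^{-1, \tilde p}(\Omega)$ from \eqref{eq:p_star} together with the a priori estimate \eqref{eq:apriori} of \cref{thm:existence} (applied with $p^* := \tilde p$) yields a constant $M > 0$, independent of $n$, such that
\begin{equation*}
    \|y_n\|_{H^1_0(\Omega)} + \|y_n\|_{C(\overline\Omega)} \leq M \|u_n\|_{L^p(\Omega)}.
\end{equation*}
Then \cref{ass:cost_func} gives $g_M(\|u_n\|_{L^p(\Omega)}) \leq J(y_n, u_n) \leq C_0$. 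Combined with $g_M(t) \to \infty$ as $t \to \infty$, this forces $\{u_n\}$ to be bounded in $L^p(\Omega)$. Since $p > N/2 \geq 1$, $L^p(\Omega)$ is reflexive, so along a subsequence $u_n \rightharpoonup \bar u$ in $L^p(\Omega)$ for some $\bar u \in L^p(\Omega)$.

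After possibly enlarging $\bar \rho$ (so that the ball $U$ defined in \eqref{eq:U_set} contains the sequence $\{u_n\}$ and its limit $\bar u$), the complete continuity of $S: U \to W^{1, \bar s}_0(\Omega)$ from \cref{cor:continuity2} implies $y_n \to \bar y := S(\bar u)$ strongly in $W^{1, \bar s}_0(\Omega)$, and in particular in $H^1_0(\Omega)$. Consequently $(y_n, u_n) \rightharpoonup (\bar y, \bar u)$ weakly in $H^1_0(\Omega) \times L^p(\Omega)$, and the weak lower semicontinuity of $J$ from \cref{ass:cost_func} yields
\begin{equation*}
    J(\bar y, \bar u) \leq \liminf_{n \to \infty} J(y_n, u_n) = \inf_{u \in L^p(\Omega)} j(u).
\end{equation*}
Since $\bar y = S(\bar u)$, the pair $(\bar y, \bar u)$ is admissible and hence a minimizer of \eqref{eq:P2}.

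The only nontrivial step is the coercivity argument in the first paragraph, which crucially uses both the state-dependent bound provided by \cref{thm:existence} and the specific form of the coercivity assumption \cref{ass:cost_func}, whose linear growth in $\|u\|_{L^p(\Omega)}$ is precisely matched by the a priori estimate; the remaining steps are standard once complete continuity of $S$ is available.
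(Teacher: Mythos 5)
Your proof is correct and follows essentially the same approach as the paper's: coercivity of the reduced functional via the a priori estimate of \cref{thm:existence} combined with \cref{ass:cost_func}, followed by the direct method. The paper invokes ``weak lower semicontinuity of $j$'' without elaboration, while you spell out that this follows from the complete continuity of $S$ (\cref{cor:continuity2}) together with the weak lower semicontinuity of $J$; this fills in a detail but does not change the argument.
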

\begin{proof}
    Applying \cref{thm:existence} to the case where $p^* := \tilde{p}$ with $\tilde{p}$ defined as in \eqref{eq:p_star} yields that
    \begin{equation*}
        \| S(u) \|_{H^1_0(\Omega)} + \| S(u) \|_{C(\overline\Omega)} \leq M\| u \|_{L^p(\Omega)}
    \end{equation*}
    for all $u \in L^p(\Omega)$ and for some constant $M$ independent of $u$. By \cref{ass:cost_func}, there exists a function $g_M: [0, \infty) \to \R$ such that
    $\lim_{t \to \infty}g_M(t) = +\infty$ and $j(u) \geq g_M(\|u \|_{L^p(\Omega)})$ for all $u \in L^p(\Omega)$. This implies that $j$ is coercive. Together with the weak lower semicontinuity of $j$, the existence of a minimizer follows by Tonelli's direct method.
\end{proof}

The remainder of this section is devoted to deriving optimality conditions for \eqref{eq:P2}. The weakest conditions are the primal stationarity conditions, which are also obtained by standard arguments.
\begin{proposition}[primal stationarity] \label{prop:B_Stationarity}
    Assume that \crefrange{ass:quasi_func1}{ass:cost_func} are satisfied. Then any local minimizer $(\bar y, \bar u) \in H^1_0(\Omega) \times L^p(\Omega)$ of \eqref{eq:P2} satisfies
    \begin{equation}
        \label{eq:B_stationary}
        \partial_y J(\bar y, \bar u)S'(\bar u; h) + \partial_u J(\bar y, \bar u)h \geq 0 \quad \text{for all} \quad h \in L^p(\Omega).
    \end{equation}
\end{proposition}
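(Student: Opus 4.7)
The plan is to pass to the reduced formulation \eqref{eq:P21} and observe that if $(\bar y,\bar u)$ is a local minimizer of \eqref{eq:P2} then $\bar y = S(\bar u)$ by feasibility, so $\bar u$ is a local minimizer of $j$ on $L^p(\Omega)$. Apply the setup of \eqref{eq:U_set} with this choice of $\bar u$ as center, so that \cref{thm:DD} provides directional differentiability of $S$ on a neighborhood $U$ of $\bar u$. For an arbitrary but fixed $h \in L^p(\Omega)$ and all sufficiently small $t>0$, one has $\bar u + th \in U$ and local optimality gives
\begin{equation*}
    \frac{j(\bar u + t h) - j(\bar u)}{t} \geq 0.
\end{equation*}
The conclusion \eqref{eq:B_stationary} will then follow by letting $t \to 0^+$, once a chain rule identifies the directional derivative of $j$ with the expression on the left-hand side of \eqref{eq:B_stationary}.

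To establish that chain rule, I would combine the directional differentiability of $S\colon U\to H^1_0(\Omega)$ with the Fréchet differentiability of $J$ stated in \cref{ass:cost_func}. From \cref{thm:DD} we have
\begin{equation*}
    S(\bar u + t h) = \bar y + t\, S'(\bar u; h) + r(t), \qquad \tfrac{1}{t}\|r(t)\|_{H^1_0(\Omega)} \to 0 \text{ as } t\to 0^+.
\end{equation*}
Inserting this into the Fréchet expansion
\begin{equation*}
    J(\bar y + k, \bar u + m) - J(\bar y,\bar u) = \partial_y J(\bar y,\bar u) k + \partial_u J(\bar y,\bar u) m + o\bigl(\|k\|_{H^1_0(\Omega)} + \|m\|_{L^p(\Omega)}\bigr)
\end{equation*}
with $k := t\, S'(\bar u;h) + r(t)$ and $m := t h$, dividing by $t$, and using linearity and continuity of $\partial_y J(\bar y,\bar u)$ and $\partial_u J(\bar y,\bar u)$ yields
\begin{equation*}
    \lim_{t\to 0^+} \frac{j(\bar u + th) - j(\bar u)}{t} = \partial_y J(\bar y,\bar u) S'(\bar u;h) + \partial_u J(\bar y,\bar u) h.
\end{equation*}
Combining this with the nonnegativity of the difference quotient above and the arbitrariness of $h$ delivers \eqref{eq:B_stationary}.

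The only nonstandard ingredient is the chain rule; the asymmetry between directional differentiability of $S$ (valid only for $t\to 0^+$ with no linearity in $h$) and Fréchet differentiability of $J$ is precisely what makes the composition cleanly directionally differentiable, with no further obstacle beyond the bookkeeping above.
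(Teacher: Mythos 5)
Your proof is correct and follows essentially the same route as the paper: reduce to $j(u)=J(S(u),u)$, invoke the directional differentiability of $S$ from \cref{thm:DD} together with the Fréchet differentiability of $J$ to get the chain rule, and conclude from local optimality. The only cosmetic difference is that you carry out the chain-rule computation explicitly, whereas the paper outsources it to \cite[Lem.~3.9]{Meyer2013}; your bookkeeping of the remainder term is sound because $J$ is Fréchet differentiable, so plain directional differentiability of $S$ suffices.
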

\begin{proof}
    By virtue of \cref{thm:DD}, the continuous differentiability of the cost functional $J$, and \cite[Lem.~3.9]{Meyer2013}, the reduced cost functional $j: U \to \R$ is directionally differentiable with its directional derivative $\partial_y J(\bar y, \bar u)S'(\bar u; h) + \partial_u J(\bar y, \bar u)h$. The desired result then follows from the local optimality of $\bar u$ and a standard argument.
\end{proof}

In the following subsections, we will derive stronger, dual, optimality conditions that involve Lagrange multipliers for the non-smooth quasilinear equation.

\subsection{C-stationary conditions} \label{sec:reg}

We start with C-stationarity conditions, which can be obtained by regularizing problem \eqref{eq:P2} and passing to the limit.

Let $(\bar y, \bar u) \in H^1_0(\Omega) \times L^p(\Omega)$ be a local minimizer of \eqref{eq:P2} and set $G(u) :=\frac{1}{p} \| u \|^p_{L^p(\Omega)}$. We then consider the regularized problem
\begin{equation} \label{eq:RE_P}
    \tag{P$_\epsilon$} 
    \left\{
        \begin{aligned}
            \min_{u\in L^p(\Omega), y\in H^1_0(\Omega)} &J_\epsilon(y,u):= J(y,u) + G(u-\bar u) \\
            \text{s.t.} \quad
            &\begin{aligned}[t]
                -\div [a_\epsilon(y)\nabla y + b(\nabla y)] &= u &&\text{in } \Omega,\\
                y &= 0 &&\text{on } \partial\Omega,
            \end{aligned}
        \end{aligned}
    \right.
\end{equation}
with $a_\epsilon$ defined in \eqref{eq:a_epsilon}.
The reduced cost functional of \eqref{eq:RE_P} is given by
\begin{equation*}
    j_\epsilon(u):=J_\epsilon(S_{\epsilon}(u),u).
\end{equation*}
In addition, we set
\begin{equation} \label{eq:U_0}
    U_0 := \{ u \in L^p(\Omega) \mid \| u - \bar u \|_{L^p(\Omega)} \leq \bar \rho \},
\end{equation}
Note that $U_0 \subset U$ with $U$ given in \eqref{eq:U_set}.

We first show that any minimizer of \eqref{eq:P2} can be approximated by minimizers of \eqref{eq:RE_P}.
\begin{proposition} \label{prop:conv}
    Assume that \crefrange{ass:quasi_func1}{ass:cost_func} are fulfilled.
    Let $(\bar y, \bar u)\in H^1_0(\Omega)\times L^p(\Omega)$ be a local minimizer of problem \eqref{eq:P2}.
    Then there exists a sequence $\{(y_{\epsilon}, u_\epsilon)\}$ of local minimizers of problems \eqref{eq:RE_P} such that $u_\epsilon \in U_0$, $\{y_\epsilon\}$ is bounded in $W^{1,\bar s}_0(\Omega)$ with $\bar s$ given in \eqref{eq:U_set}, and
    \begin{alignat}{3}
        u_\epsilon &\to \bar u &&\quad \text{in } L^p(\Omega) &&\quad \text{ as } \epsilon \to 0^+, \label{control_conver} \\
        y_{\epsilon} &\to \bar y &&\quad \text{ in } H^1_0(\Omega) \cap C(\overline{\Omega}) &&\quad \text{ as } \epsilon \to 0^+. \label{state_conver}
    \end{alignat}
\end{proposition}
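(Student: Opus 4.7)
The plan is to apply Barbu's adapted penalization technique: construct $u_\epsilon$ as a minimizer of $j_\epsilon$ restricted to the weakly closed ball $U_0$, then use strong convergence $u_\epsilon \to \bar u$ (driven by the penalty term $G$) to deactivate the constraint, so that $u_\epsilon$ is in fact a local minimizer of the unrestricted regularized problem. Throughout we may assume without loss of generality that $\bar\rho$ was chosen small enough so that $\bar u$ is a global minimizer of $j$ on $U_0$.

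\textbf{Existence of a constrained regularized minimizer.} For each fixed $\epsilon>0$ I would minimize $j_\epsilon$ over the bounded weakly closed convex set $U_0$. A minimizing sequence $\{u^n\}\subset U_0$ is bounded in $L^p(\Omega)$ (with $p>1$), hence admits a weakly convergent subsequence $u^n\weakto u_\epsilon\in U_0$. \cref{cor:RE_regu}(ii) yields $S_\epsilon(u^n)\to S_\epsilon(u_\epsilon)$ in $W^{1,\bar s}_0(\Omega)\cap C(\overline{\Omega})$, and combined with the weak lower semicontinuity of $J$ from \cref{ass:cost_func} and of the convex continuous penalty $G$, the direct method produces a minimizer $u_\epsilon\in U_0$.

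\textbf{Passage to the limit.} Since $\{u_\epsilon\}\subset U_0$ is bounded in $L^p(\Omega)$, along a subsequence $u_\epsilon\weakto u^*\in U_0$. By \cref{lem:cont}, $y_\epsilon=S_\epsilon(u_\epsilon)\to S(u^*)$ and also $S_\epsilon(\bar u)\to \bar y$ in $H^1_0(\Omega)\cap C(\overline\Omega)$. From the minimizing property,
\begin{equation*}
    j_\epsilon(u_\epsilon) \;\leq\; j_\epsilon(\bar u) \;=\; J(S_\epsilon(\bar u),\bar u) \;\longrightarrow\; J(\bar y,\bar u),
\end{equation*}
whereas weak lower semicontinuity of $J$ and $G$ gives
\begin{equation*}
    J(S(u^*),u^*) + \liminf_{\epsilon\to 0^+} G(u_\epsilon-\bar u) \;\leq\; J(\bar y,\bar u).
\end{equation*}
Since $u^*\in U_0$ and $\bar u$ is a global minimizer of $j$ on $U_0$, one has $J(S(u^*),u^*)\geq J(\bar y,\bar u)$. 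This forces equality and $\liminf G(u_\epsilon-\bar u)=0$; weak lower semicontinuity of $G$ then gives $G(u^*-\bar u)=0$ and hence $u^*=\bar u$.

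\textbf{Strong convergence and local optimality.} The crucial point is that the penalty term has the explicit form $p\,G(u_\epsilon-\bar u)=\|u_\epsilon-\bar u\|_{L^p(\Omega)}^p$, so $\liminf G(u_\epsilon-\bar u)=0$ yields $u_\epsilon\to \bar u$ strongly in $L^p(\Omega)$ along the subsequence. A standard subsequence-of-subsequence argument upgrades this to convergence of the full sequence. \cref{lem:cont} then gives $y_\epsilon\to\bar y$ in $H^1_0(\Omega)\cap C(\overline\Omega)$, while $\{y_\epsilon\}$ is bounded in $W^{1,\bar s}_0(\Omega)$ by \cref{cor:RE_regu}(i) since $u_\epsilon\in U_0\subset U$. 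Finally, because $u_\epsilon\to\bar u$ strongly and $\bar u$ lies in the interior of $U_0$, for all sufficiently small $\epsilon$ the element $u_\epsilon$ is interior to $U_0$, so its minimizing property on $U_0$ becomes local optimality for the unconstrained \eqref{eq:RE_P}.

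The main technical hurdle is the simultaneous identification of the weak limit as $\bar u$ and upgrading to strong $L^p$ convergence; both hinge on chaining the inequalities correctly and on the specific $\|\cdot\|^p_{L^p}$ form of $G$, which was introduced precisely to produce strong rather than merely weak convergence (the latter being insufficient for the subsequent derivation of C-stationarity).
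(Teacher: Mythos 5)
Your proposal follows the same Barbu-style penalization route as the paper: minimize the regularized reduced cost over a ball, use the $\|\cdot\|^p_{L^p}$ penalty to force strong convergence of the constrained minimizers, and deduce local unconstrained optimality from an interior-point argument. There are two imprecisions worth flagging.

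First, the opening ``WLOG $\bar\rho$ was chosen small enough so that $\bar u$ is a global minimizer of $j$ on $U_0$'' is not a legitimate reduction: $\bar\rho$ is fixed in \eqref{eq:U_set} prior to invoking local minimality, and the radius of local minimality of $\bar u$ may be strictly smaller. The paper avoids touching $\bar\rho$ by introducing an auxiliary radius $\rho \in (0,\bar\rho)$ and minimizing $j_\epsilon$ over $\overline B_{L^p(\Omega)}(\bar u,\rho) \subset U_0$, which is feasible because all the uniform regularity bounds already hold on the larger ball $U$. This yields exactly the setup you want without the spurious WLOG.

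Second, ``$\liminf_{\epsilon\to 0^+} G(u_\epsilon-\bar u)=0$ yields $u_\epsilon\to\bar u$ strongly along the subsequence'' is incorrect as written: a $\liminf$ equal to zero only produces convergence along a further sub-subsequence. To get convergence along the subsequence itself one needs $\limsup G(u_\epsilon-\bar u)=0$. The paper obtains this by noting that the full chain of inequalities forces $j_\epsilon(u_\epsilon)\to j(\bar u)$, and then
\begin{equation*}
  \limsup_{\epsilon\to 0^+} \tfrac{1}{p}\|u_\epsilon-\bar u\|^p_{L^p(\Omega)}
  = \limsup_{\epsilon\to 0^+}\bigl(j_\epsilon(u_\epsilon)-J(y_\epsilon,u_\epsilon)\bigr)
  \le \lim_{\epsilon\to 0^+} j_\epsilon(u_\epsilon) - \liminf_{\epsilon\to 0^+} J(y_\epsilon,u_\epsilon) \le 0,
\end{equation*}
using $\liminf J(y_\epsilon,u_\epsilon)\geq J(\bar y,\bar u)$. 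Your subsequence-of-subsequences remark can rescue the argument, but the sentence preceding it is a genuine logical gap; you should replace it with the $\limsup$ chain above (or carry out the contradiction argument explicitly). With those two repairs, your proof is essentially the one given in the paper.
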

\begin{proof}
    We proceed similarly as in \cite{MeyerSusu2017}.
    Since $(\bar y, \bar u)$ is a local optimal solution to problem \eqref{eq:P2}, there exists $\rho \in (0, \bar \rho)$ such that
    \begin{equation}\label{eq:conv:func}
        j(\bar u) \leq j(u) \qquad\text{for all } u \in L^p(\Omega) \text{ with } \|u - \bar u \|_{L^p(\Omega)} \leq \rho.
    \end{equation}
    We then consider the auxiliary optimal control problem
    \begin{equation} \label{P-auxi}
        \tag{P$_\epsilon^\rho$}
        \min_{\overline B_{L^p(\Omega)}(\bar u, \rho)} j_\epsilon(u),
    \end{equation}
    which by standard arguments admits at least one global minimizer $u_{\epsilon} \in \overline B_{L^p(\Omega)}(\bar u, \rho) \subset U_0$. Now let $\epsilon \to 0^+$. Then there exist a subsequence, denoted by the same symbol, and a function $\tilde{u} \in \overline B_{L^p(\Omega)}(\bar u, \rho)$ such that
    \begin{equation}
        u_{\epsilon} \rightharpoonup \tilde{u} \quad \text{weakly in } L^p(\Omega). \label{limit1}
    \end{equation}
    Combining this with \cref{lem:cont} yields that
    \begin{equation}
        y_{\epsilon} \to S(\tilde{u}) =: \tilde{y} \quad \text{in } H^1_0(\Omega) \cap C(\overline{\Omega}), \label{limit2}
    \end{equation}
    where $y_\epsilon := S_\epsilon(u_\epsilon)$.
    We now show that
    \begin{equation}
        \tilde{u} = \bar u \quad \text{and} \quad u_\epsilon \to \bar u \ \text{in } L^p(\Omega). \label{iden}
    \end{equation}
    In fact, due to $\bar u \in \overline B_{L^p(\Omega)}(\bar u, \rho)$, we have from \cref{lem:cont} that
    \begin{equation}
        j_\epsilon(u_\epsilon) \leq j_\epsilon(\bar u) = J(S_{\epsilon}(\bar u) , \bar u) \to J(S(\bar u), \bar u) = j(\bar u). \label{lim2}
    \end{equation}
    Using the limits \eqref{limit1} and \eqref{limit2} as well as the weak lower semicontinuity of $J$ and of the norm on $L^p(\Omega)$, we arrive at
    \begin{equation}
        \begin{aligned}[t]
            j(\bar u) \geq \limsup_{\epsilon \to 0^+} j_\epsilon(u_\epsilon) &\geq \liminf_{\epsilon \to 0^+} j_\epsilon(u_\epsilon) \\
            & = \liminf_{\epsilon \to 0^+}\left(J(y_{\epsilon} , u_\epsilon) + \frac{1}{p} \| u_\epsilon - \bar u \|_{L^p(\Omega)}^p \right) \\
            & \geq J(\tilde{y} , \tilde{u}) + \frac{1}{p} \| \tilde{u} - \bar u \|_{L^p(\Omega)}^p \\
            & \geq j(\bar u) + \frac{1}{p} \| \tilde{u} - \bar u \|_{L^p(\Omega)}^p.
        \end{aligned}
        \label{lim1}
    \end{equation}
    Here we have just used \eqref{eq:conv:func} to obtain the last inequality in \eqref{lim1}. We thus obtain $\tilde{u} = \bar u$ and $j_\epsilon(u_\epsilon) \to j(\bar u)$.
    We then have
    \begin{equation*}
        \limsup_{\epsilon \to 0^+} \frac{1}{p} \|u_\epsilon - \bar u \|_{L^p(\Omega)}^p = \limsup_{\epsilon \to 0^+} \left( j_\epsilon(u_\epsilon) - J(y_\epsilon , u_\epsilon)\right) \leq 0.
    \end{equation*} Consequently, \eqref{iden} holds. Moreover, the boundedness of $\{y_\epsilon\}$ in $W^{1,\bar s}_0(\Omega)$ follows from the boundedness of $\{u_\epsilon\}$ in $L^p(\Omega)$ and the a priori estimate \eqref{eq:RE_regu}.

    It remains to show that $(y_{\epsilon}, u_\epsilon)$ is a local minimizer of \eqref{eq:RE_P} for sufficiently small $\epsilon >0$. To this end, let $u \in L^p(\Omega)$ be arbitrary with $\| u - u_\epsilon \|_{L^p(\Omega)} < \frac{\rho}{2}$. For $\epsilon$ small enough, we obtain
    \begin{equation*}
        \| u - \bar u \|_{L^p(\Omega)} \leq \| u - u_\epsilon \|_{L^p(\Omega)} + \| \bar u - u_\epsilon \|_{L^p(\Omega)} < \frac{\rho}{2} + \frac{\rho}{2} =\rho,
    \end{equation*}
    which implies that $u$ is a feasible point of problem \eqref{P-auxi}. The global optimality of $u_\epsilon$ for \eqref{P-auxi} thus implies that
    $j_\epsilon(u_\epsilon) \leq j_\epsilon(u). $
    Hence $(y_{\epsilon}, u_\epsilon)$ is a local minimizer of problem \eqref{eq:RE_P}.
\end{proof}

By standard arguments using the continuous differentiability of $J$, the Fr\'{e}chet differentiability of $G$, the G\^{a}teaux differentiability of $S_\epsilon$, and \cref{cor:adjoint_T}, we obtain necessary optimality conditions for the regularized problem \eqref{eq:RE_P}.
\begin{proposition} \label{prop:RE_OS}
    Assume that \crefrange{ass:quasi_func1}{ass:quasi_func2} hold true and that the cost functional $J$ is continuously Fr\'{e}chet differentiable. Then, any local minimizer $(y_{\epsilon}, u_\epsilon)$, $u_\epsilon \in U_0$, of problem \eqref{eq:RE_P} fulfills together with the unique adjoint $w_\epsilon \in H^1_0(\Omega)$ the optimality system
    \begin{subequations}\label{eq:RE_OS}
        \begin{align}
            &\left\{
                \begin{aligned}
                    -\div \left [\left(a_\epsilon(y_\epsilon)\Id + J_b(\nabla y_\epsilon)^T \right)\nabla w_\epsilon \right] + a'_\epsilon(y_\epsilon) \nabla y_\epsilon \cdot \nabla w_\epsilon &= \partial_y J(y_\epsilon,u_\epsilon) && \text{in } \Omega, \\
                    w_\epsilon &=0 && \text{on } \partial\Omega, 
                \end{aligned}
            \right.  \label{eq:RE_adjoint_OS}\\
            &w_\epsilon + \partial_u J(y_\epsilon,u_\epsilon) + G'(u_\epsilon - \bar u) =0. \label{eq:RE_normal_OS}
        \end{align}
    \end{subequations}
\end{proposition}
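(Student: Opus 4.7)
The plan is to apply the first-order necessary condition to the reduced cost $j_\epsilon$ at the interior minimizer $u_\epsilon$ and then introduce the adjoint state via the invertibility of $T_{y_\epsilon,\epsilon}^*$ established in \cref{cor:adjoint_T}. Since $S_\epsilon$ is Gâteaux differentiable on $U$, $J$ is continuously Fréchet differentiable, and $G(u) = \tfrac{1}{p}\|u\|_{L^p(\Omega)}^p$ is $C^1$, the chain rule shows that $j_\epsilon$ is Gâteaux differentiable on $U$ with
\begin{equation*}
    \langle j_\epsilon'(u_\epsilon), h \rangle = \partial_y J(y_\epsilon, u_\epsilon)\bigl(S_\epsilon'(u_\epsilon) h\bigr) + \partial_u J(y_\epsilon, u_\epsilon) h + G'(u_\epsilon - \bar u) h
\end{equation*}
for all $h \in L^p(\Omega)$. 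Since $u_\epsilon \in U_0 \subset U$ is an interior local minimizer of $j_\epsilon$, this derivative vanishes.

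Next, I would introduce the adjoint state $w_\epsilon \in H^1_0(\Omega)$ as the unique solution to \eqref{eq:RE_adjoint_OS}, whose existence follows from \cref{cor:adjoint_T} together with the facts that $y_\epsilon \in W^{1,\bar s}_0(\Omega)$ (by \cref{cor:RE_regu}) and that $\partial_y J(y_\epsilon, u_\epsilon) \in H^{-1}(\Omega)$ (because $J$ is Fréchet differentiable from $H^1_0(\Omega) \times L^p(\Omega)$ into $\R$). Writing the adjoint equation as $T_{y_\epsilon,\epsilon}^* w_\epsilon = \partial_y J(y_\epsilon,u_\epsilon)$, and observing that for every $h \in L^p(\Omega)$ the linearized state $z_h := S_\epsilon'(u_\epsilon) h$ satisfies $T_{y_\epsilon,\epsilon} z_h = h$ by \eqref{eq:T_oper}, the standard duality identity yields
\begin{equation*}
    \partial_y J(y_\epsilon, u_\epsilon)(z_h) = \langle T_{y_\epsilon,\epsilon}^* w_\epsilon, z_h \rangle_{H^{-1},H^1_0} = \langle w_\epsilon, T_{y_\epsilon,\epsilon} z_h \rangle_{H^1_0, H^{-1}} = \langle w_\epsilon, h \rangle,
\end{equation*}
where the last pairing is well defined since $L^p(\Omega) \hookrightarrow H^{-1}(\Omega)$ for $p > N/2$.

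Substituting this identity back into $\langle j_\epsilon'(u_\epsilon), h \rangle = 0$ gives
\begin{equation*}
    \langle w_\epsilon + \partial_u J(y_\epsilon, u_\epsilon) + G'(u_\epsilon - \bar u),\, h \rangle = 0 \qquad \text{for all } h \in L^p(\Omega),
\end{equation*}
which, since $w_\epsilon \in L^{p'}(\Omega)$ by Sobolev embedding and the remaining terms already live in $L^{p'}(\Omega)$, is equivalent to the pointwise identity \eqref{eq:RE_normal_OS}. No genuine obstacle arises in this argument; the only care required is bookkeeping the function spaces so that all duality pairings are well defined — in particular, verifying that $\partial_y J(y_\epsilon, u_\epsilon)$ lies in the correct dual space to invoke \cref{cor:adjoint_T} and that the final identity can be interpreted in $L^{p'}(\Omega)$.
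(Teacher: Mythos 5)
Your proposal is correct and spells out exactly the ``standard arguments'' that the paper invokes without detail: differentiate the reduced cost $j_\epsilon$ by the chain rule, use that the interior local minimizer makes $j_\epsilon'(u_\epsilon)$ vanish, introduce the adjoint $w_\epsilon$ via the isomorphism $T_{y_\epsilon,\epsilon}^*$ from \cref{cor:adjoint_T}, and transpose to obtain the gradient equation; your bookkeeping (that $y_\epsilon \in W^{1,\bar s}_0(\Omega)$, that $\partial_y J(y_\epsilon,u_\epsilon)\in H^{-1}(\Omega)$, and that all three terms in \eqref{eq:RE_normal_OS} lie in $L^{p'}(\Omega)$) is precisely the verification the paper leaves implicit.
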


We now wish to pass to the limit in \eqref{eq:RE_OS}. While this is straightforward for \eqref{eq:RE_normal_OS}, it is difficult to do this in \eqref{eq:RE_adjoint_OS} directly since we have only the boundedness of sequences $\{a'_\epsilon(y_\epsilon)\}$ and $\{\nabla w_\epsilon \}$, respectively, in $L^\infty(\Omega)$ and $L^2(\Omega)$.
Instead, we shall pass to the limit in the \emph{adjoint} equation of \eqref{eq:RE_adjoint_OS} -- which coincides with the linearized equation \eqref{eq:T_oper} -- and apply a duality argument. The presence of Clarke's generalized gradient $\partial_C$ in the following conditions justifies the term \emph{C-stationarity conditions}.
\begin{theorem}[C-stationarity conditions] \label{thm:OS}
    Let \crefrange{ass:quasi_func1}{ass:cost_func} hold and $(\bar y, \bar u)\in H^1_0(\Omega)\times L^p(\Omega)$ be a local minimizer of \eqref{eq:P2}. Then there exist $w \in H^1_0(\Omega)$ and $\chi \in L^\infty(\Omega)$ such that
    \begin{subequations}
        \label{eq:OS}
        \begin{align}
            &\left\{
                \begin{aligned}
                    -\div \left [\left(a(\bar y)\Id + J_b(\nabla \bar y)^T \right)\nabla w \right] + \chi \nabla \bar y \cdot \nabla w &= \partial_y J(\bar y,\bar u) && \text{in } \Omega, \\
                    w &=0 && \text{on } \partial\Omega,
                \end{aligned}
            \right.  \label{eq:adjoint_OS}\\
            &\chi(x) \in \partial_C a\left( \bar y(x) \right) \quad \text{for a.e. } x \in \Omega, \label{eq:Clarke_OS}\\
            &w + \partial_u J(\bar y,\bar u) = 0. \label{eq:normal_OS}
        \end{align}
    \end{subequations}
\end{theorem}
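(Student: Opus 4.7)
My plan follows the regularization-and-pass-to-the-limit strategy announced in the introduction, with the crucial twist that the passage to the limit is carried out on the \emph{linearized} equation rather than directly on the adjoint. Starting from $(\bar y, \bar u)$, \cref{prop:conv} furnishes local minimizers $(y_\epsilon, u_\epsilon)$ of the regularized problems \eqref{eq:RE_P} with $u_\epsilon \to \bar u$ in $L^p(\Omega)$, $y_\epsilon \to \bar y$ in $H^1_0(\Omega) \cap C(\overline\Omega)$, and $\{y_\epsilon\}$ bounded in $W^{1,\bar s}_0(\Omega)$; \cref{thm:regu}(ii) further gives $\nabla y_\epsilon \to \nabla \bar y$ strongly in $L^r(\Omega)$ for every $1 \leq r < \bar s$. \cref{prop:RE_OS} then supplies adjoint states $w_\epsilon \in H^1_0(\Omega)$ satisfying the regularized optimality system \eqref{eq:RE_OS}, and this is the sequence whose weak limit I will identify.

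Next I extract weak limits. Because $a$ is locally Lipschitz and $\{y_\epsilon\}$ is uniformly $L^\infty$-bounded, the identity $a'_\epsilon = \psi_\epsilon * a'$ gives $\|a'_\epsilon(y_\epsilon)\|_{L^\infty} \leq C_{M+1}$ uniformly in $\epsilon$, so along a subsequence $a'_\epsilon(y_\epsilon) \weakto^{*} \chi$ in $L^\infty(\Omega)$. The regularized adjoint \eqref{eq:RE_adjoint_OS} reads $T^*_{y_\epsilon,\epsilon} w_\epsilon = \partial_y J(y_\epsilon, u_\epsilon)$; by \cref{prop:T_iso} the operator norms of $T^{-1}_{y_\epsilon,\epsilon}$ (hence of their adjoints) are bounded uniformly in $\epsilon$, while $\partial_y J(y_\epsilon, u_\epsilon) \to \partial_y J(\bar y, \bar u)$ in $H^{-1}(\Omega)$ by continuous Fréchet differentiability of $J$, so $\{w_\epsilon\}$ is bounded in $H^1_0(\Omega)$ and $w_\epsilon \weakto w$ along a subsequence.

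Passing to the limit in the normal equation \eqref{eq:RE_normal_OS} is routine: $G'(u_\epsilon - \bar u) \to 0$ in $L^{p'}(\Omega)$ because $u_\epsilon \to \bar u$ in $L^p$, $\partial_u J(y_\epsilon, u_\epsilon) \to \partial_u J(\bar y, \bar u)$ in $L^{p'}(\Omega)$ by continuity, and the Sobolev embedding $H^1_0(\Omega) \hookrightarrow L^{p'}(\Omega)$ (valid since $p > N/2$) allows the identification $w + \partial_u J(\bar y, \bar u) = 0$ in $L^{p'}(\Omega)$. The adjoint equation \eqref{eq:adjoint_OS} is the real difficulty: the nonlinear term $a'_\epsilon(y_\epsilon) \nabla y_\epsilon \cdot \nabla w_\epsilon$ contains one weak-$*$-only convergent factor together with two factors that converge merely weakly, which precludes a direct limit. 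I therefore dualize: for each $v \in H^{-1}(\Omega)$, set $z_\epsilon := T^{-1}_{y_\epsilon,\epsilon} v \in H^1_0(\Omega)$, which by \cref{thm:RE_dire_deri} is uniformly bounded; extract $z_\epsilon \weakto z$ in $H^1_0(\Omega)$ and $z_\epsilon \to z$ strongly in $L^{s_1}(\Omega)$. Here the Hölder balance \eqref{eq:s1}, combined with the strong $L^r$-convergence of $\nabla y_\epsilon$ for $r$ arbitrarily close to $\bar s$, is precisely what drives the critical convergence
\begin{equation*}
    \int_\Omega a'_\epsilon(y_\epsilon) z_\epsilon \nabla y_\epsilon \cdot \nabla \phi \, dx \longrightarrow \int_\Omega \chi z \nabla \bar y \cdot \nabla \phi \, dx \quad\text{for every } \phi \in H^1_0(\Omega),
\end{equation*}
since the product $z_\epsilon \nabla y_\epsilon \cdot \nabla \phi$ converges strongly in $L^{s_*}(\Omega) \hookrightarrow L^1(\Omega)$ for some $s_* > 1$ and pairs with the weak-$*$ limit $\chi \in L^\infty(\Omega)$; the other terms pass to the limit by standard strong-weak arguments. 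Consequently $z$ solves the limit linearized equation with $\chi$ in place of $a'_\epsilon(y_\epsilon)$, and uniqueness for this linear equation (argued exactly as in Step~1 of \cref{thm:dire_deri1}) identifies the full limit. The duality chain
\begin{equation*}
    \langle v, w_\epsilon \rangle = \langle T_{y_\epsilon,\epsilon} z_\epsilon, w_\epsilon \rangle = \langle T^*_{y_\epsilon,\epsilon} w_\epsilon, z_\epsilon \rangle = \langle \partial_y J(y_\epsilon, u_\epsilon), z_\epsilon \rangle
\end{equation*}
then passes to the limit (weak convergence in $H^1_0$ paired with strong convergence in $H^{-1}$) to yield $\langle v, w \rangle = \langle \partial_y J(\bar y, \bar u), z \rangle$ for every $v \in H^{-1}(\Omega)$, which is precisely the weak formulation of \eqref{eq:adjoint_OS}.

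It remains to verify $\chi(x) \in \partial_C a(\bar y(x))$ a.e. Since $a$ is Lipschitz and hence absolutely continuous, integration by parts gives $a'_\epsilon(t) = \int \psi_\epsilon(t-s) a'(s)\,ds$, so that $a'_\epsilon(y_\epsilon(x))$ is a convex average of values $a'(s)$ over $|s - y_\epsilon(x)| \leq \epsilon$. Upper semicontinuity of the Clarke subdifferential, together with the uniform convergence $y_\epsilon \to \bar y$, places these averages inside a convex set that shrinks to $\partial_C a(\bar y(x))$; a Mazur-type convexification of a subsequence converging a.e.\ to $\chi$ then transfers the inclusion to the weak-$*$ limit. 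The main obstacle throughout, and the ultimate justification for the elaborate choice of exponents $s_1, s_2, \bar s$ in \eqref{eq:s1}, is the careful Hölder bookkeeping in the duality step; once those exponents are aligned, the remaining convergences are standard.
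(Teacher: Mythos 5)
Your argument follows the paper's proof essentially step by step: the same regularization via \cref{prop:conv} and \cref{prop:RE_OS}, the same uniform bound on $\{w_\epsilon\}$ coming from \cref{prop:T_iso}, the same weak-$*$ extraction of $\chi$ from $a'_\epsilon(y_\epsilon)$, and the same duality device of passing to the limit in the linearized equation $z_\epsilon = T^{-1}_{y_\epsilon,\epsilon}v$ (rather than directly in the adjoint) followed by the chain $\langle v, w_\epsilon\rangle = \langle \partial_y J(y_\epsilon, u_\epsilon), z_\epsilon\rangle$. The only cosmetic difference is in the Clarke inclusion, where you sketch the convex-averaging, upper-semicontinuity, and Mazur argument explicitly while the paper simply invokes \cite[Chap.~I, Thm.~3.14]{Tiba1990}; the rest matches.
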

\begin{proof}
    We first address \eqref{eq:normal_OS}. Let $y_\epsilon, u_\epsilon$ and $w_\epsilon$ be given as in \cref{prop:RE_OS,prop:conv}.
    From \eqref{eq:RE_adjoint_OS}, we have $w_\epsilon = ({T}_{y_\epsilon, \epsilon}^{-1})^*\partial_y J(y_\epsilon, u_\epsilon)$. As a result of \cref{prop:conv} and estimate \eqref{eq:T_esti}, we obtain a constant $C > 0$ such that $\| w_\epsilon \|_{H^1_0(\Omega)} \leq C$ for all $\epsilon >0$.
    Then there exists a subsequence, denoted in the same way, satisfying $w_\epsilon \rightharpoonup w$ in $H^1_0(\Omega)$ as $\epsilon \to 0^+$
    for some $w \in H^1_0(\Omega)$. Now, letting $\epsilon \to 0^+$ in \eqref{eq:RE_normal_OS} and using limits \eqref{control_conver} and \eqref{state_conver}, the continuity of $\partial_u J$ gives \eqref{eq:normal_OS}.

    \medskip

    To show \eqref{eq:Clarke_OS}, we first see that there exists a constant $M>0$ such that
    \begin{equation*}
        \|y_\epsilon \|_{C(\overline{\Omega})}, \|\bar y \|_{C(\overline{\Omega})} \leq M \quad \text{for all} \quad \epsilon >0.
    \end{equation*}
    Since $a_\epsilon$ is Lipschitz continuous on $[-M,M]$ with Lipschitz constant $C_{M+1}$, we have
    $|a_\epsilon'(t)| \leq C_{M+1}$ for all $t \in [-M,M]$ and so
    \begin{equation*}
        |a_\epsilon'(y_\epsilon(x))| \leq C_{M+1} \quad \text{for a.e. } x \in \Omega.
    \end{equation*}
    Furthermore, since $a_\epsilon$ is continuously differentiable, $a_\epsilon'(y_\epsilon(x))\in \partial_C a_\epsilon(y_\epsilon(x))$ for almost every $x\in \Omega$.
    We can therefore extract a subsequence, denoted in the same way, such that
    \begin{equation} \label{eq:Clarke_conv}
        a_\epsilon'(y_\epsilon) \rightharpoonup^* \chi \quad \text{in } L^\infty(\Omega) \quad \text{as } \epsilon \to 0^+
    \end{equation}
    for some $\chi\in L^\infty(\Omega)$. 
    Combining this with the limit \eqref{state_conver}, we obtain from \cite[Chap.~I, Thm.~3.14]{Tiba1990} that $\chi(x) \in \partial_C a(\bar y(x))$ for a.e. $x \in \Omega$. This proves \eqref{eq:Clarke_OS}. 

    \medskip

    It remains to show \eqref{eq:adjoint_OS}.
    First, for any $\varphi \in H^{-1}(\Omega)$, we have from the fact that $w_\epsilon = ({T}_{y_\epsilon, \epsilon}^{-1})^*\partial_y J(y_\epsilon, u_\epsilon)$ that
    \begin{equation} \label{eq:adjoint1}
        \left\langle \varphi,w_\epsilon \right\rangle = \left\langle \partial_y J(y_\epsilon, u_\epsilon) , T^{-1}_{y_\epsilon, \epsilon}\varphi \right\rangle.
    \end{equation}
    Thus, $z_\epsilon := T^{-1}_{y_\epsilon, \epsilon}\varphi$ satisfies
    \begin{equation} \label{eq:adjoint2}
        \left\{
            \begin{aligned}
                -\div \left [\left(a_\epsilon(y_\epsilon)\Id + J_b(\nabla y_\epsilon) \right) \nabla z_\epsilon + a'_\epsilon(y_\epsilon) z_\epsilon \nabla y_\epsilon \right] &= \varphi && \text{in } \Omega, \\
                z_\epsilon &=0 && \text{on } \partial\Omega.
            \end{aligned}
        \right.
    \end{equation}
    The a priori estimate \eqref{eq:RE_apriori_DD} (see also \eqref{eq:T_esti}) guarantees the boundedness of $\{z_\epsilon\}$ in $H^{1}_0(\Omega)$. We can thus extract a subsequence, named also by $\{z_\epsilon\}$, such that
    \begin{equation} \label{eq:lim_z_ep}
        z_\epsilon \rightharpoonup z_0 \quad \text{in } H^1_0(\Omega) \quad \text{and} \quad z_\epsilon \to z_0 \quad \text{in } L^{s_1}(\Omega).
    \end{equation}
    Letting $\epsilon \to 0^+$ in \eqref{eq:adjoint2} and using the fact that $a_\epsilon(y_\epsilon) \to a(\bar y)$ in $C(\bar{\Omega})$, $\nabla y_\epsilon \to \nabla \bar y$ in $L^2(\Omega)$, $J_b(\nabla y_\epsilon) \to J_b(\nabla \bar y)$ in $L^m(\Omega)$ for all $m \geq 1$, as well as the limit \eqref{eq:Clarke_conv}, we obtain
    \begin{equation*}
        \left\{
            \begin{aligned}
                -\div \left [\left(a(\bar y)\Id + J_b(\nabla \bar y) \right) \nabla z_0 + \chi z_0 \nabla \bar y \right] &= \varphi && \text{in } \Omega, \\
                z_0 &=0 && \text{on } \partial\Omega.
            \end{aligned}
        \right.
    \end{equation*}
    We now define the operator $\hat T: H^1_0(\Omega) \to H^{-1}(\Omega)$ by
    \begin{equation*}
        \hat T (z) := -\div \left [\left(a(\bar y)\Id + J_b(\nabla \bar y) \right) \nabla z + \chi z \nabla \bar y \right], \quad z \in H^1_0(\Omega).
    \end{equation*}
    Arguing as in the proof of \cref{prop:T_iso} shows that $\hat T$ is an isomorphism.
    In addition, we have $z_0 = \hat T^{-1}(\varphi)$.
    Letting $\epsilon \to 0^+$, the right hand side of \eqref{eq:adjoint1} tends to
    \begin{equation}
        \left \langle \partial_y J(\bar y, \bar u), z_0 \right\rangle = \left\langle \partial_y J(\bar y, \bar u), \hat T^{-1} \varphi \right\rangle = \left\langle \varphi, \left(\hat T^{-1} \right)^*\left( \partial_y J(\bar y, \bar u) \right) \right\rangle.
    \end{equation}
    Here we have used the fact that $\partial_y J(y_\epsilon, u_\epsilon) \to \partial_y J(\bar y, \bar u)$ in $H^{-1}(\Omega)$.
    On the other hand, the left hand side of \eqref{eq:adjoint1} converges to $\left \langle \varphi, w \right\rangle$ as $\epsilon \to 0^+$. We thus have
    \begin{equation*}
        \left \langle \varphi,w \right\rangle = \left\langle \varphi, \left(\hat T^{-1} \right)^*\left( \partial_y J(\bar y, \bar u) \right) \right\rangle.
    \end{equation*}
    Since $\varphi$ is arbitrary in $H^{-1}(\Omega)$, we obtain $w = \left(\hat T^{-1} \right)^*\left( \partial_y J(\bar y, \bar u) \right)$, which yields \eqref{eq:adjoint_OS}.
\end{proof}
Note that the multiplier $\chi\in L^\infty(\Omega)$ is not uniquely determined by \eqref{eq:OS}, which is an obstacle for solving the latter using a Newton-type method. However, under additional assumptions on $a$, we can derive an equivalent optimality system for which this is possible; see \cref{sec:PC1}.

\subsection{Strong stationarity}

The conditions \eqref{eq:OS} can be strengthened by including a pointwise generalized sign condition on the Lagrange multiplier $w$, which are typically referred to as \emph{strong stationarity conditions}.
\begin{theorem}[strong stationarity] \label{thm:strong_stationarity}
    Let \crefrange{ass:quasi_func1}{ass:cost_func} hold and $(\bar y, \bar u)\in H^1_0(\Omega)\times L^p(\Omega)$ be a local minimizer of \eqref{eq:P2}. Then there exist $w \in H^1_0(\Omega)$ and $\chi \in L^\infty(\Omega)$ such that
    \begin{subequations}
        \label{eq:OS_strong}
        \begin{align}
            &\left\{
                \begin{aligned}
                    -\div \left [\left(a(\bar y)\Id + J_b(\nabla \bar y)^T \right)\nabla w \right] + \chi \nabla \bar y \cdot \nabla w &= \partial_y J(\bar y,\bar u) && \text{in } \Omega, \\ 	w &=0 && \text{on } \partial\Omega,
                \end{aligned}
            \right.
            \label{eq:strong_adjoint_OS}\\
            &\chi(x) \in \partial_C a\left( \bar y(x) \right) \quad \text{for a.e. } x \in \Omega, \label{eq:strong_Clarke_OS}\\
            &w + \partial_u J(\bar y,\bar u) = 0, \label{eq:strong_normal_OS} \\
            &\left( a'(\bar y(x); \kappa) - \chi(x)\kappa \right) \nabla \bar y(x) \cdot \nabla w(x) \leq 0 \quad \text{for a.e. }x\in \Omega, \kappa\in \R.
            \label{eq:strong_extra_pw}
        \end{align}
    \end{subequations}
\end{theorem}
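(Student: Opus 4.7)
The plan is to obtain \eqref{eq:strong_adjoint_OS}--\eqref{eq:strong_normal_OS} immediately from \cref{thm:OS}, and then derive the extra pointwise condition \eqref{eq:strong_extra_pw} by combining primal stationarity \eqref{eq:B_stationary} with a duality argument on the linearized state and adjoint equations, followed by a localization exploiting positive homogeneity of $s \mapsto a'(\bar y(x); s)$.

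\emph{Step 1 (duality identity).} For arbitrary $h \in L^p(\Omega)$, set $z := S'(\bar u; h) \in H^1_0(\Omega)$. Testing the adjoint equation \eqref{eq:strong_adjoint_OS} against $z$ and the linearized state equation from \cref{thm:DD} against $w$ and subtracting produces
\begin{equation*}
    \int_\Omega (\chi z - a'(\bar y; z))\,\nabla \bar y \cdot \nabla w\,dx = \langle \partial_y J(\bar y,\bar u), z\rangle_{H^{-1},H^1_0} - \int_\Omega w h\,dx.
\end{equation*}
Using \eqref{eq:strong_normal_OS} to substitute $w = -\partial_u J(\bar y,\bar u)$ and inserting \eqref{eq:B_stationary} yields
\begin{equation}\label{eq:plan_key}
    \int_\Omega \bigl(a'(\bar y; z) - \chi z\bigr)\,\nabla \bar y \cdot \nabla w\,dx \leq 0 \qquad \text{for every } h \in L^p(\Omega).
\end{equation}

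\emph{Step 2 (extension to arbitrary $z \in H^1_0(\Omega)$).} The linearized solution map $H^{-1}(\Omega) \ni v \mapsto z(\bar y,v) \in H^1_0(\Omega)$ from \cref{thm:dire_deri1} extends $S'(\bar u;\cdot)$ continuously and is surjective (for any $z \in H^1_0(\Omega)$, applying the associated linearized operator produces some $v \in H^{-1}(\Omega)$ with $z(\bar y,v) = z$ by uniqueness). Continuity in $v \in H^{-1}(\Omega)$ of the functional on the left-hand side of \eqref{eq:plan_key} follows from \cref{thm:dire_deri2}(ii), the compact embedding $H^1_0(\Omega) \Subset L^{s_1}(\Omega)$, the Lipschitz bound of \cref{lem:Lip-Hadamard}, and a Hölder check based on \eqref{eq:s1} that places the integrand in an $L^r$-space with $r>1$. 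Density of $L^p(\Omega)$ in $H^{-1}(\Omega)$ then upgrades \eqref{eq:plan_key} to hold for every $z \in H^1_0(\Omega)$.

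\emph{Step 3 (pointwise condition).} Specializing to $z = \kappa \varphi$ with $\kappa \in \R$ and nonnegative $\varphi \in H^1_0(\Omega)$, positive homogeneity of $a'(\bar y(x);\cdot)$ applied with scaling factor $\varphi(x) \geq 0$ yields $a'(\bar y(x); \kappa\varphi(x)) = \varphi(x)\,a'(\bar y(x);\kappa)$ a.e., so \eqref{eq:plan_key} reduces to
\begin{equation*}
    \int_\Omega \varphi \,\bigl(a'(\bar y;\kappa) - \chi\kappa\bigr)\nabla \bar y \cdot \nabla w\,dx \leq 0 \qquad \forall \varphi \in H^1_0(\Omega),\ \varphi \geq 0.
\end{equation*}
A standard mollifier argument (applied to nonnegative test functions in $C^\infty_c(\Omega) \subset H^1_0(\Omega)$) then forces the locally integrable integrand $(a'(\bar y;\kappa) - \chi\kappa)\nabla \bar y\cdot\nabla w$ to be $\leq 0$ a.e. for each fixed $\kappa$. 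Choosing a countable dense $\{\kappa_n\} \subset \R$ and invoking the uniform (in $x$) Lipschitz continuity of $\kappa \mapsto a'(\bar y(x);\kappa) - \chi(x)\kappa$ from \cref{lem:Lip-Hadamard}(ii), the inequality extends to every $\kappa \in \R$ on a common full-measure set, giving \eqref{eq:strong_extra_pw}.

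The main technical obstacle is the Step-2 extension: one must simultaneously control continuous dependence of the nonlinear functional $v \mapsto \int_\Omega (a'(\bar y; z(\bar y,v)) - \chi z(\bar y,v))\nabla \bar y \cdot \nabla w\,dx$ on $H^{-1}(\Omega)$, which requires carefully tracking the Hölder exponents from \eqref{eq:s1} to keep the integrand in an $L^{r}$-space with $r > 1$, and verify surjectivity of $v \mapsto z(\bar y,v)$, which follows from uniqueness in \cref{thm:dire_deri1} applied to the canonical right-hand side generated by any given $z \in H^1_0(\Omega)$.
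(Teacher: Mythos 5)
Your Steps~1 and 2 reproduce the paper's argument: you derive the integral inequality for $z = S'(\bar u; h)$ using the primal stationarity condition and a duality pairing, and then extend it to all $z \in H^1_0(\Omega)$ by density of $L^p(\Omega)$ in $H^{-1}(\Omega)$, continuity of $v \mapsto z(\bar y,v)$, and surjectivity onto $H^1_0(\Omega)$. The integrability bookkeeping you flag as the main obstacle is exactly the one resolved by \eqref{eq:s1}: $a'(\bar y;z) - \chi z \in L^{s_1}(\Omega)$, $\nabla\bar y \in L^{\bar s}(\Omega)$, $\nabla w \in L^2(\Omega)$, and $1/s_1 + 1/\bar s + 1/2 = 1 - 1/s_2 < 1$, so the integrand lies in $L^{s_2'}(\Omega) \subset L^1(\Omega)$ and the functional is continuous on $H^1_0(\Omega)$. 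The surjectivity check is also correct, since $a'(\bar y;z)\nabla\bar y \in L^2(\Omega)^N$ for $z \in H^1_0(\Omega)$ by Hölder.

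Your Step~3, however, departs from the paper. The paper further extends the integral inequality to $z \in L^2(\Omega)$ (by density of $H^1_0(\Omega)$ in $L^2(\Omega)$) and then argues by contradiction: if \eqref{eq:strong_extra_pw} failed on a set $\Omega_0$ of positive measure for some $\kappa$, then $z := \kappa\1_{\Omega_0}$ would violate the integral inequality. You instead stay in $H^1_0(\Omega)$ and choose $z = \kappa\varphi$ with nonnegative $\varphi \in H^1_0(\Omega)$, exploiting the positive homogeneity $a'(\bar y(x);\kappa\varphi(x)) = \varphi(x)\,a'(\bar y(x);\kappa)$ to factor out $\varphi$, and then recover the pointwise statement by a mollifier/Lebesgue-point argument for each $\kappa$ in a countable dense set, combined with the Lipschitz continuity of $\kappa\mapsto a'(\bar y(x);\kappa) - \chi(x)\kappa$ from \cref{lem:Lip-Hadamard}(ii). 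Both routes are valid; your version has a slight advantage in that it sidesteps the question of whether the functional $z\mapsto \int_\Omega (a'(\bar y;z)-\chi z)\nabla\bar y\cdot\nabla w\,dx$ is continuous on all of $L^2(\Omega)$ (since $\nabla\bar y\cdot\nabla w$ is only in $L^r(\Omega)$ with $1/r = 1/\bar s + 1/2 > 1/2$, this is delicate unless one restricts to $L^\infty(\Omega)$-bounded $z$, which is what the paper implicitly does via the indicator-function choice). In exchange, your argument needs the explicit observation of positive homogeneity of the directional derivative and a countable-density step at the end, whereas the paper's contradiction is slightly more direct.
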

\begin{proof}
    Due to \cref{thm:OS}, it only remains to show that \eqref{eq:strong_extra_pw} holds. 
    As a first step, we show that 
    \begin{equation}
        \int_{\Omega} \left( a'(\bar y; z) - \chi z \right) \nabla \bar y \cdot \nabla w dx \leq 0 \quad \forall z \in L^{2}(\Omega)
        \label{eq:strong_extra}
    \end{equation}
    by considering in turn the case of $z \in H^1_0(\Omega)$ with $z := S'(\bar u; h)$ for some $h \in L^p(\Omega)$, followed by the case of  $z \in H^1_0(\Omega)$ arbitrary, and finally the case of $z \in L^{2}(\Omega)$.

    \medskip

    \emph{(i) $z = S'(\bar u; h)$ with $h \in L^p(\Omega)$.} In this case, $z$ satisfies the equation
    \begin{equation*}
        \left\{
            \begin{aligned}
                -\div \left [\left(a(\bar y)\Id + J_b(\nabla \bar y) \right) \nabla z + a'(\bar y; z) \nabla \bar y \right] & = h && \text{in } \Omega, \\
                z &=0 && \text{on } \partial\Omega.
            \end{aligned}
        \right.
    \end{equation*}
    Multiplying the above equation by $w$, integrating over $\Omega$, and using integration by parts, we deduce that
    \begin{equation*}
        \int_{\Omega} \left(a(\bar y)\Id + J_b(\nabla \bar y) \right) \nabla z \cdot \nabla w dx + \int_{\Omega} a'(\bar y; z) \nabla \bar y \cdot \nabla w dx = \int_{\Omega} hw dx,
    \end{equation*}
    which is equivalent to
    \begin{equation*}
        \begin{aligned}
            \left\langle -\div \left [\left(a(\bar y)\Id + J_b(\nabla \bar y)^T \right)\nabla w \right], z \right\rangle + \int_{\Omega} a'(\bar y; z) \nabla \bar y \cdot \nabla w dx = \int_{\Omega} hw dx.
        \end{aligned}
    \end{equation*}
    Combining this with \eqref{eq:strong_adjoint_OS} yields
    \begin{equation*}
        \int_{\Omega} \left( a'(\bar y; z) - \chi z \right) \nabla \bar y \cdot \nabla w dx = - \left\langle \partial_y J(\bar y, \bar u), z \right\rangle + \int_{\Omega} hw dx.
    \end{equation*}
    From this and relation \eqref{eq:strong_normal_OS} as well as the fact that $z = S'(\bar u; h)$, we arrive at
    \begin{equation}
        \label{eq:extra}
        \int_{\Omega} \left( a'(\bar y; z) - \chi z \right) \nabla \bar y \cdot \nabla w dx = - \left\langle \partial_y J(\bar y, \bar u), S'(\bar u; h) \right\rangle - \left\langle \partial_u J(\bar y, \bar u), h \right\rangle \leq 0,
    \end{equation}
    where the last inequality follows from \cref{prop:B_Stationarity}.

    \medskip

    \emph{(ii) $z \in H^1_0(\Omega)$.} Note that $\bar y \in W^{1, \bar s}_0(\Omega) \hookrightarrow C(\overline{\Omega})$, $z \in L^{s_1}(\Omega)$, and so $
    a'(\bar y; z) \in L^{s_1}(\Omega)$ and $a'(\bar y; z) \nabla \bar y \in \left( L^2(\Omega) \right)^N$. Recall that $\bar s$ and $s_1$ are defined as in \eqref{eq:U_set} and \eqref{eq:s1}, respectively. Setting $h:=-\div \left [\left(a(\bar y)\Id + J_b(\nabla \bar y) \right) \nabla z + a'(\bar y; z) \nabla \bar y \right]$, it holds that $h \in H^{-1}(\Omega)$.
    Since $L^p(\Omega)$ is dense in $H^{-1}(\Omega)$ with $p>N/2$, there exists a subsequence $\{h_n\} \subset L^p(\Omega)$ such that $h_n \to h$ in $H^{-1}(\Omega)$. This together with \cref{thm:dire_deri2} implies that $S'(\bar u; h_n) \to z$ in $H^1_0(\Omega)$. Combining this with \eqref{eq:extra}, we obtain that \eqref{eq:strong_extra} holds for $z \in H^1_0(\Omega)$.

    \medskip

    \emph{(iii) $z \in L^{2}(\Omega)$.} In this case, \eqref{eq:strong_extra} is a direct consequence of (ii) and the density of $H^1_0(\Omega)$ in $L^{2}(\Omega)$. 

    \bigskip

    To conclude the proof, we show that \eqref{eq:strong_extra} implies \eqref{eq:strong_extra_pw}. To this end, we assume in contrast that there exist a measurable set $\Omega_0 \subset \Omega$ with $|\Omega_0| >0$ and a number $\kappa \in \R$ such that
    \begin{equation}
        \label{eq:strong_extra1}
        \left( a'(\bar y(x); \kappa) - \chi(x) \kappa \right) \nabla \bar y(x) \cdot \nabla w(x) > 0 \quad \forall x \in \Omega_0.
    \end{equation}
    Setting
    \begin{equation*}
        z(x) :=
        \begin{cases}
            \kappa & \text{if } x \in \Omega_0 \\
            0 & \text{otherwise},
        \end{cases}
    \end{equation*}
    we have that $z \in L^{2}(\Omega)$ and $a'(\bar y(x); z(x)) = 0$ for a.e. $x \in \Omega \setminus \Omega_0$. Estimate \eqref{eq:strong_extra} therefore leads to
    \begin{equation*}
        \int_{\Omega_0} \left( a'(\bar y; \kappa) - \chi \kappa \right) \nabla \bar y \cdot \nabla w \,dx = \int_{\Omega} \left( a'(\bar y; z) - \chi z \right) \nabla \bar y \cdot \nabla w \,dx \leq 0,
    \end{equation*}
    in contradiction to \eqref{eq:strong_extra1}. Hence, \eqref{eq:strong_extra_pw} holds.
\end{proof}
Obtaining more explicit sign conditions on the Lagrange multiplier requires additional assumptions on $a$ (e.g., convexity, which implies regularity of $a$ and hence that $\chi\kappa \leq a'(\bar y(x);\kappa)$ for all $\kappa \in \R$, cf.~\cite[Thm.~4.12]{Constantin2017}) However, as we will show in \cref{prop:equi_SS_OS} below, for a quite general and reasonable class of functions, condition \eqref{eq:strong_extra_pw} holds with equality anyway.

\section{Piecewise differentiable nonlinearities}\label{sec:PC1}

We now consider the special case that the non-smooth nonlinearity $a$ is differentiable apart from countably many points, where it is possible to reformulate the C-stationarity conditions \eqref{eq:OS} in a form that can be solved by a semi-smooth Newton method. 

We first recall the following definition from, e.g. \cite[Chap.~4]{Scholtes} or \cite[Def.~2.19]{Ulbrich2011}.
Let $V$ be an open subset of $\R$. A continuous function $g: V \to \R$ is said to be a \emph{$PC^1$-function} if for each point $t_0 \in V$ there exist a neighborhood $W \subset V$ and a finite set of $C^1$-functions $g_i: W \to \R$, $i=1,2,\dots,m$, such that
\begin{equation*}
    g(t) \in \left\{g_1(t), g_2(t),\dots,g_m(t)\right\} \quad \text{for all} \quad t \in W.
\end{equation*}
For any continuous function $g: V \to \R$, we define the set
\begin{equation*}
    D_{g} := \left\{ t \in V \mid g \ \text{is not differentiable at } t \right\}.
\end{equation*}
We shall say that a $PC^1$ function $g$ is \emph{countably $PC^1$} if the set $D_g$ is countable, i.e., it can be represented as $D_{g} = \{t_i\mid i \in I_g\}$ with a countable set $I$.

\subsection{Optimality conditions}

Under the additional assumption that $a$ is a countably $PC^1$ function, we can show that \eqref{eq:adjoint_OS} holds for \emph{any} $\chi\in L^\infty(\Omega)$ satisfying \eqref{eq:Clarke_OS}.
For this purpose, we introduce for any $y\in H^1_0(\Omega)$ the measurable set
\begin{equation}
    \Omega_y := \{y \in D_a\} = \{y = t_i, i\in I_a\}.
\end{equation}
\begin{theorem}[relaxed optimality system] \label{thm:OS_PC1}
    Assume that \crefrange{ass:quasi_func1}{ass:cost_func} are satisfied and that $a$ is a countably $PC^1$ function. Let $(\bar y, \bar u)\in H^1_0(\Omega)\times L^p(\Omega)$ be a local minimizer of \eqref{eq:P2}. Then there exists a unique $w \in H^1_0(\Omega)$ satisfying 
    \begin{subequations}
        \label{eq:reduced_OS}
        \begin{align}
            &\left\{
                \begin{aligned}
                    -\div \left [\left(a(\bar y)\Id + J_b(\nabla \bar y)^T \right)\nabla w \right] + \chi \nabla \bar y \cdot \nabla w &= \partial_y J(\bar y,\bar u) && \text{in } \Omega, \\
                    w &=0 && \text{on } \partial\Omega, 
                \end{aligned}
            \right. \label{eq:adjoint_OS_PC1}\\
            &w + \partial_u J(\bar y,\bar u) = 0 \label{eq:normal_OS_PC1}
        \end{align}
    \end{subequations}
    for any $\chi \in L^\infty(\Omega)$ with $\chi(x) \in \partial_C a\left( \bar y(x) \right)$ a.e. $x \in \Omega$.
\end{theorem}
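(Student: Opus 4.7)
The plan is to apply \cref{thm:OS} to extract an adjoint state $w\in H^1_0(\Omega)$ together with one specific admissible multiplier $\chi^*$, and then exploit the countably $PC^1$ structure of $a$ to argue that replacing $\chi^*$ by any other admissible $\chi$ does not alter the equation satisfied by $w$. First, \cref{thm:OS} produces $w \in H^1_0(\Omega)$ and $\chi^* \in L^\infty(\Omega)$ with $\chi^*(x) \in \partial_C a(\bar y(x))$ a.e.\ such that the system \eqref{eq:OS} holds. The normal relation \eqref{eq:normal_OS} is exactly \eqref{eq:normal_OS_PC1} and, since $\partial_u J(\bar y,\bar u)$ is fixed, uniquely determines $w$ in $H^1_0(\Omega)$; thus both existence and uniqueness of $w$ are settled already by the particular $\chi^*$.

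It then remains to verify \eqref{eq:adjoint_OS_PC1} for an arbitrary admissible $\chi$. The crux is the identity
\begin{equation*}
    \chi(x)\,\nabla \bar y(x) \cdot \nabla w(x) = \chi^*(x)\,\nabla \bar y(x) \cdot \nabla w(x) \quad \text{for a.e. } x \in \Omega,
\end{equation*}
because every remaining term in \eqref{eq:adjoint_OS_PC1} is independent of $\chi$, so once the identity is established, \eqref{eq:adjoint_OS_PC1} will follow immediately from \eqref{eq:adjoint_OS}. To prove it I split $\Omega = (\Omega \setminus \Omega_{\bar y}) \cup \Omega_{\bar y}$. On $\Omega \setminus \Omega_{\bar y}$ one has $\bar y(x) \notin D_a$, so $a$ is (classically) differentiable at $\bar y(x)$, which for a $PC^1$ function forces $\partial_C a(\bar y(x)) = \{a'(\bar y(x))\}$ to collapse to a singleton; hence $\chi(x) = \chi^*(x) = a'(\bar y(x))$ a.e.\ on that set. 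On $\Omega_{\bar y} = \bigcup_{i \in I_a}\{\bar y = t_i\}$ I would invoke the classical Stampacchia-type fact that $\nabla v = 0$ a.e.\ on each level set $\{v = c\}$ of any $v \in H^1(\Omega)$ (see, e.g., \cite[Lem.~7.7]{Gilbarg_Trudinger}). Applied to $\bar y \in H^1_0(\Omega)$ this yields $\nabla \bar y = 0$ a.e.\ on each $\{\bar y = t_i\}$, and since $I_a$ is countable the same holds a.e.\ on the union $\Omega_{\bar y}$, so both sides of the identity vanish there.

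The only genuinely nontrivial ingredient is the level-set vanishing of $\nabla \bar y$; everything else is bookkeeping based on $PC^1$ structure. Note that countability of $D_a$ is precisely what is needed to absorb the pointwise ambiguity in $\chi$ into a single measure-zero contribution to the product $\chi\,\nabla\bar y\cdot\nabla w$, so the argument would break down without it.
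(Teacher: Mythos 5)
Your proof is correct and follows essentially the same route as the paper: invoke \cref{thm:OS} to obtain $w$ and one admissible $\chi_0$, then show $\chi\nabla\bar y\cdot\nabla w$ is independent of the admissible $\chi$ by splitting $\Omega$ into $\Omega\setminus\Omega_{\bar y}$ (where $\partial_C a(\bar y(x))$ is a singleton because $a$ is $PC^1$ and differentiable there) and $\Omega_{\bar y}$ (where $\nabla\bar y=0$ a.e.\ by the level-set argument, countability of $D_a$ making $\Omega_{\bar y}$ a countable union of level sets). The paper cites \cite[Rem.~2.6]{Chipot2009} for the level-set fact where you cite Gilbarg--Trudinger, and you spell out the (trivial) uniqueness of $w$ via \eqref{eq:normal_OS_PC1} which the paper leaves implicit, but these are cosmetic differences.
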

\begin{proof}
    In view of \cref{thm:OS}, there exist $w \in H^1_0(\Omega)$ and $\chi_0 \in L^\infty(\Omega)$ with $\chi_0(x) \in \partial_C a\left( \bar y(x) \right)$ for a.e. $x \in \Omega$ satisfying \eqref{eq:normal_OS_PC1} and
    \begin{equation}
        \label{eq:adjoint_OS1}
        \left\{
            \begin{aligned}
                -\div \left [\left(a(\bar y)\Id + J_b(\nabla \bar y)^T \right)\nabla w \right] + \chi_0 \nabla \bar y \cdot \nabla w &= \partial_y J(\bar y,\bar u) && \text{in } \Omega, \\
                w & =0 && \text{on } \partial\Omega.
            \end{aligned}
        \right.
    \end{equation}
    It therefore suffices to prove that \eqref{eq:adjoint_OS1} holds for arbitrary $\chi\in L^\infty(\Omega)$ with $\chi(x) \in \partial_C a\left( \bar y(x) \right)$ for almost every $x\in \Omega$.  

    We now proceed by pointwise almost everywhere inspection.
    \begin{enumerate}[label={Case }\arabic*:, align=left]
        \item $x\in \Omega\setminus\Omega_{\bar y}$. Then by definition, $a$ is differentiable and hence even continuously differentiable in $\bar y(x)$ and hence $\partial_C a(\bar y(x)) = \{a'(\bar y(x))\}$, which implies that $\chi(x) = a'(\bar y(x)) = \chi_0(x)$.
        \item $x\in \Omega_{\bar y}$. Then $x \in \Omega_i:=\{\bar y = t_i\}$ for some $i\in I_a$. Since $\bar y \in H^1_0(\Omega)$, this implies that $\nabla \bar y(x)=0$ for almost every $x\in \Omega_{\bar y}$; see, e.g., \cite[Rem.~2.6]{Chipot2009}. Hence,
            \begin{equation*}
                \chi(x) \nabla \bar y(x)\cdot \nabla w(x) = 0 = \chi_0 \nabla \bar y(x)\cdot \nabla w(x).
            \end{equation*}
    \end{enumerate}
    In both cases, we see that the left-hand side of \eqref{eq:adjoint_OS1} equals that of \eqref{eq:adjoint_OS_PC1}, which yields the claim.
\end{proof}

The benefit of \eqref{eq:reduced_OS} is that we can fix and then eliminate $\chi$ from these optimality conditions such that the reduced system in $(\bar y,\bar u,w)$ has a unique solution, allowing application of a Newton-type method.

Before we turn to this, we note that a similar argument as in the proof of \cref{thm:OS_PC1} shows that under this additional assumption on $a$, strong stationarity is in fact not stronger than C stationarity.
\begin{proposition}[strong stationarity reduces to C-stationarity]
    \label{prop:equi_SS_OS}
    Assume that $a$ is countably $PC^1$. Then, the system \eqref{eq:OS_strong} is equivalent to the system \eqref{eq:OS}.
\end{proposition}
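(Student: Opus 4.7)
The direction \eqref{eq:OS_strong} $\Rightarrow$ \eqref{eq:OS} is immediate, since every strongly stationary triple $(\bar y, \bar u, w)$ with multiplier $\chi$ is by definition C-stationary. The plan is therefore to prove the converse: given a C-stationary triple $(\bar y, \bar u, w)$ with multiplier $\chi \in L^\infty(\Omega)$ satisfying \eqref{eq:adjoint_OS}--\eqref{eq:normal_OS}, I will show that the extra pointwise condition \eqref{eq:strong_extra_pw} is automatically satisfied, i.e.,
\begin{equation*}
    \left( a'(\bar y(x); \kappa) - \chi(x)\kappa \right) \nabla \bar y(x) \cdot \nabla w(x) \leq 0 \quad \text{for a.e. } x \in \Omega,\ \kappa \in \R.
\end{equation*}

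The strategy mirrors the pointwise case distinction used in the proof of \cref{thm:OS_PC1}, splitting $\Omega$ into the two disjoint measurable sets $\Omega \setminus \Omega_{\bar y}$ and $\Omega_{\bar y} = \{\bar y \in D_a\}$. On $\Omega \setminus \Omega_{\bar y}$, the countably $PC^1$ hypothesis implies that $a$ is continuously differentiable at $\bar y(x)$, so $\partial_C a(\bar y(x)) = \{a'(\bar y(x))\}$. Hence $\chi(x) = a'(\bar y(x))$, and directional differentiability together with classical differentiability yields $a'(\bar y(x); \kappa) = a'(\bar y(x))\kappa = \chi(x)\kappa$ for every $\kappa \in \R$. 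The bracket in \eqref{eq:strong_extra_pw} therefore vanishes pointwise a.e. on $\Omega \setminus \Omega_{\bar y}$.

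On $\Omega_{\bar y}$, the key observation is that $\Omega_{\bar y} = \bigcup_{i \in I_a}\{\bar y = t_i\}$ is a countable union of level sets of the Sobolev function $\bar y \in H^1_0(\Omega)$. By the standard fact that $\nabla \bar y = 0$ almost everywhere on any level set of a Sobolev function (see \cite[Rem.~2.6]{Chipot2009}, exactly as invoked in the proof of \cref{thm:OS_PC1}), countable additivity gives $\nabla \bar y(x) = 0$ for a.e. $x \in \Omega_{\bar y}$. Consequently $\nabla \bar y(x) \cdot \nabla w(x) = 0$ a.e. on $\Omega_{\bar y}$, and the inequality \eqref{eq:strong_extra_pw} again holds trivially (with equality) for every $\kappa \in \R$. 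Combining the two cases yields \eqref{eq:strong_extra_pw}.

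The only subtlety is measurability and the handling of the universal quantifier over $\kappa \in \R$: once we have a full-measure set on which $\nabla \bar y \cdot \nabla w = 0$ on $\Omega_{\bar y}$ and on which $\chi = a'(\bar y)$ on $\Omega \setminus \Omega_{\bar y}$, the pointwise inequality holds simultaneously for all $\kappa$ on that set, so no issue arises. Hence there is no serious obstacle here; the real content has already been absorbed into \cref{thm:OS_PC1}, and this proposition is essentially a corollary of the two structural facts used there (pointwise uniqueness of the Clarke subgradient off $D_a$ and vanishing of $\nabla \bar y$ on level sets).
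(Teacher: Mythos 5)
Your proposal is correct and matches the paper's argument essentially verbatim: both prove the nontrivial implication by pointwise a.e.\ inspection, using that $\partial_C a(\bar y(x))$ is a singleton where $\bar y(x)\notin D_a$ so the bracket in \eqref{eq:strong_extra_pw} vanishes, and that $\nabla\bar y=0$ a.e.\ on the countable union of level sets $\Omega_{\bar y}$ (the argument already deployed in the proof of \cref{thm:OS_PC1}). The extra remarks on measurability and the quantifier over $\kappa$ are fine but not something the paper spells out.
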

\begin{proof}
    Clearly, \eqref{eq:OS_strong} implies \eqref{eq:OS}. It is therefore sufficient to show that if \eqref{eq:OS} holds, the sign condition \eqref{eq:strong_extra_pw} is always satisfied. We again proceed by pointwise almost everywhere inspection.
    \begin{enumerate}[label={Case }\arabic*:, align=left]
        \item $x\in \Omega\setminus\Omega_{\bar y}$. Then by definition, $a$ is differentiable and hence even continuously differentiable in $\bar y(x)$ and hence $\partial_C a(\bar y(x)) = \{a'(\bar y(x))\}$, which implies that $\chi(x)z(x) = a'(\bar y(x))z(x) = a'(\bar y(x); z(x))$.
        \item $x\in \Omega_{\bar y}$. Then $\nabla \bar y(x) = 0$ as above.
    \end{enumerate}
    In both cases, we obtain that \eqref{eq:strong_extra_pw} holds with equality.
\end{proof}

\subsection{Semi-smooth Newton method}\label{sec:semi-smooth}

For the sake of presentation,
we consider problem \eqref{eq:P2} for $a(y) := 1 + |y|$, $b \equiv 0$, and 
\begin{equation*}
    J(y,u) := \frac{1}{2} \|y - y_d \|^2_{L^2(\Omega)} + \frac{\alpha}{2}\|u \|^2_{L^2(\Omega)},
\end{equation*}
where $\Omega$ is a bounded convex domain in $\R^N$, $N \in \{2,3\}$, $y_d$ is a given desired function in $L^\infty(\Omega)$, and $\alpha>0$. 

Let $(\bar y,\bar u)\in H^1_0(\Omega)\times L^2(\Omega)$ be a local minimizer for this instance of problem \eqref{eq:P2}, and let $\bar \chi\in L^\infty(\Omega)$ with $\bar \chi(x) \in \partial_C a(\bar y(x))=\sign(\bar y(x))$ for a.e. $x\in \Omega$ be arbitrary but fixed, e.g.,
\begin{equation*}
    \bar \chi(x) = 
    \begin{cases}
        1 & \text{if } \bar y(x) \geq 0,\\
        -1 & \text{if } \bar y(x) <0.
    \end{cases}
\end{equation*}
We then consider the system
\begin{equation}
    \label{eq:OS_exam}
    \left\{
        \begin{aligned}
            - \div \left[ \left( 1 + |\bar y| \right) \nabla \bar y \right] &= \bar u && \text{in } \Omega, \qquad \bar y = 0 \quad \text{on } \partial\Omega,\\
            - \div \left[ \left( 1 + |\bar y| \right) \nabla w \right] + \bar \chi \nabla \bar y \cdot \nabla w &= \bar y - y_d && \text{in } \Omega, \qquad w = 0 \quad \text{on } \partial\Omega,\\
            w + \alpha \bar u &= 0, &&
        \end{aligned}
    \right.
\end{equation}
which by \cref{thm:OS_PC1} admits a solution $(\bar y,\bar u,w)\in H^1_0(\Omega)\times L^2(\Omega)\times H^1_0(\Omega)$. By virtue of \cref{lem:regularity}, it holds that $\bar y \in H^2(\Omega) \cap H^1_0(\Omega)$, which implies that $1+|\bar y| \in W^{1,4}(\Omega)$ and $\bar\chi\nabla \bar y \in L^4(\Omega)^N$. From this and \cite[Thm.~2.6]{CasasDhamo2011}, we deduce that $w \in H^2(\Omega) \cap H^1_0(\Omega)$ as well. The second equation in \eqref{eq:OS_exam} is then equivalent to
\begin{equation}
    \label{eq:OS_equ2}
    \left\{
        \begin{aligned}
            -\Delta  w - \frac{\bar y-y_d}{1 + |\bar y|} &= 0 && \text{in } \Omega, \\
            w &= 0 && \text{on } \partial\Omega.
        \end{aligned}
    \right.
\end{equation}
Setting $\psi := \bar y + \frac{\bar y|\bar y|}{2}$, a simple computation shows that
\begin{equation}
    \label{eq:represent_y}
    \bar y = \left(-1 + \sqrt{1 + 2\psi} \right) \1_{\left\{ \psi \geq 0 \right\} } + \left(1 - \sqrt{1 - 2\psi} \right) \1_{\left\{ \psi < 0 \right\} }.
\end{equation}
By eliminating the control $\bar u$ using the third equation in \eqref{eq:OS_exam} and using \eqref{eq:OS_equ2} together with \eqref{eq:represent_y}, we see that \eqref{eq:OS_exam} is equivalent to
\begin{equation}
    \label{eq:OS_NE}
    \left\{
        \begin{aligned}
            - \Delta \psi + \frac{1}{\alpha} w &= 0 && \text{in } \Omega, \qquad \psi = 0 \quad \text{on } \partial\Omega,\\
            - \Delta w - \left[ f_1(\psi) - y_d f_2(\psi) \right] &= 0 && \text{in } \Omega, \qquad w = 0 \quad \text{on } \partial\Omega,
        \end{aligned}
    \right.
\end{equation}
for
\begin{equation*}
    \begin{aligned}
        f_1(\psi) &:= \frac{-1 + \sqrt{1 + 2\psi}}{\sqrt{1 + 2\psi}}\1_{\left\{ \psi \geq 0 \right\} } + \frac{1 - \sqrt{1 - 2\psi}}{\sqrt{1 - 2\psi}}\1_{\left\{ \psi < 0 \right\} } = \frac{-1 + \sqrt{1 + 2|\psi|}}{\sqrt{1 + 2|\psi|}}\sign(\psi), \\
        f_2(\psi) &:= \frac{1}{\sqrt{1 + 2\psi}}\1_{\left\{ \psi \geq 0 \right\} } + \frac{1}{\sqrt{1 - 2\psi}}\1_{\left\{ \psi < 0 \right\} } = \frac{1}{\sqrt{1 + 2|\psi|}}.
    \end{aligned}
\end{equation*}
(Note that $f_1(0)=0$ and hence is single-valued in spite of the occurrence of the set-valued $\sign$.)
Since $f_1$ and $f_2$ are globally Lipschitz continuous and $PC^1$-functions and $y_d \in L^\infty(\Omega)$, the corresponding superposition operators are semi-smooth as functions from $H^1_0(\Omega)$ to $L^2(\Omega)$; see, e.g., \cite[Thm.~3.49]{Ulbrich2011}. From this and the continuous embedding $L^2(\Omega) \hookrightarrow H^{-1}(\Omega)$, we conclude that the system \eqref{eq:OS_NE} is semi-smooth as an equation from $H^1_0(\Omega) \times H^1_0(\Omega)$ to $H^{-1}(\Omega) \times H^{-1}(\Omega)$

Introducing for $k\in \N$ the pointwise multiplication operator $D_k:H^1_0(\Omega)\to H^{-1}(\Omega)$ with 
\begin{equation}
    \label{eq:hk_formula}
    d^k  := \frac{1}{\left(\sqrt{1 + 2\psi^k}\right)^3}\1_{\left\{ \psi^k \geq 0 \right\}}(1+y_d) + \frac{1}{\left(\sqrt{1 - 2\psi^k}\right)^3}\1_{\left\{ \psi^k < 0 \right\}}(1-y_d),
\end{equation}
a semi-smooth Newton step thus consists in solving
\begin{equation} \label{eq:N_step}
    \begin{pmatrix}
        \frac{1}{\alpha} \Id & - \Delta \\
        -\Delta & - D_k
    \end{pmatrix}
    \begin{pmatrix}
        \delta w \\
        \delta \psi
    \end{pmatrix}  = -
    \begin{pmatrix}
        - \Delta \psi^k + \frac{1}{\alpha} w^k \\
        -\Delta w^k - \left[ f_1(\psi^k) - y_d f_2(\psi^k) \right]
    \end{pmatrix}
\end{equation}
and setting $(w^{k+1}, \psi^{k+1}) := (w^k, \psi^k) + (\delta w, \delta \psi)$. 
To show the local superlinear convergence of this iteration, it is sufficient to prove the uniformly bounded invertibility of \eqref{eq:N_step}. To this end, we employ a technique as in \cite{ClasonKunisch2016}. 
\begin{lemma}
    \label{lem:Newton}
    Assume that $d \in L^\infty(\Omega)$ such that $\| d\|_{L^\infty(\Omega)} \leq M$ for some $M >0$. If either $d \geq 0$ a.e. or $\alpha > 0$ is large enough, then the operator $B: H^1_0(\Omega)^2 =: E \to F := H^{-1}(\Omega)^2$ given by
    \begin{equation*}
        B :=
        \begin{pmatrix}
            \frac{1}{\alpha} \Id & - \Delta \\
            -\Delta & - D
        \end{pmatrix},
    \end{equation*}
    where $D$ is the pointwise multiplication operator with $d$,
    is uniformly invertible and satisfies
    \begin{equation*}
        \left \| B^{-1} \right \|_{\mathcal{L}(F, E)} \leq C
    \end{equation*}
    for some constant $C$ depending only on $\alpha$ and $M$.
\end{lemma}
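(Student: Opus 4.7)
The plan is to eliminate $\psi$ from the $2\times 2$ block system and reduce everything to a scalar elliptic equation for $w$ alone. Writing $T:=(-\Delta)^{-1}\colon H^{-1}(\Omega)\to H^1_0(\Omega)$ for the Dirichlet Laplace isomorphism, the first equation of $B(w,\psi)=(f_1,f_2)$ gives $\psi = T f_1 - \alpha^{-1}\, T w$, and inserting this into the second yields
\[
L w := -\Delta w + \frac{1}{\alpha}\, d\, T w = f_2 + d\, T f_1 =: g \qquad \text{in } H^{-1}(\Omega).
\]
Since the perturbation $w \mapsto \alpha^{-1} d\, T w$ factors through the compact embedding $H^1_0(\Omega)\Subset L^2(\Omega)$, the operator $L\colon H^1_0(\Omega)\to H^{-1}(\Omega)$ is a compact perturbation of $-\Delta$ and hence Fredholm of index zero. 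The key non-standard step is then to test $L w=g$ with $v:=T w\in H^1_0(\Omega)$ rather than with $w$; using $-\Delta T w=w$ one computes $\int_\Omega \nabla w\cdot\nabla T w\,dx=\|w\|_{L^2(\Omega)}^2$ and obtains the identity
\[
\|w\|_{L^2(\Omega)}^2 + \frac{1}{\alpha}\int_\Omega d\,|T w|^2\,dx = \langle g, T w\rangle \leq \|g\|_{H^{-1}(\Omega)}\,\|w\|_{H^{-1}(\Omega)}.
\]

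In Case 1 ($d\geq 0$), the second left-hand term is non-negative and may simply be dropped; together with the embedding $L^2(\Omega)\hookrightarrow H^{-1}(\Omega)$ with constant $C_P$, this gives $\|w\|_{L^2}\leq C_P\,\|g\|_{H^{-1}}$ irrespective of $\alpha$. In Case 2, two applications of the Poincar\'e inequality applied to $Tw$ yield the bound $\bigl|\int_\Omega d\,|T w|^2\,dx\bigr|\leq M C_P^4\,\|w\|_{L^2}^2$, and this term can be absorbed on the left as soon as $\alpha>MC_P^4$, again producing $\|w\|_{L^2}\leq C(\alpha,M)\,\|g\|_{H^{-1}}$. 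Either way, $L$ is injective, and the Fredholm alternative promotes it to a bijection.

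Finally, I would bootstrap the $L^2$-bound to an $H^1_0$-bound by reading the equation as $-\Delta w=g-\alpha^{-1}d T w$ and estimating $\|\alpha^{-1}d T w\|_{H^{-1}}\leq (MC_P^3/\alpha)\,\|w\|_{L^2}$ through the chain $T\colon L^2\to H^1_0\hookrightarrow L^2\hookrightarrow H^{-1}$; this yields $\|w\|_{H^1_0}\leq C(\alpha,M)\,\|g\|_{H^{-1}}$, and then the representation $\psi = T f_1 - \alpha^{-1}\, T w$, combined with $\|g\|_{H^{-1}}\leq \|f_2\|_{H^{-1}}+MC_P^2\|f_1\|_{H^{-1}}$, closes the estimate $\|B^{-1}\|_{\mathcal{L}(F,E)}\leq C(\alpha,M)$. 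The main obstacle I anticipate is the lack of coercivity of the natural bilinear form $\mathcal{B}((w,\psi),(v,\phi))=\alpha^{-1}(w,v)+(\nabla\psi,\nabla v)+(\nabla w,\nabla\phi)-(d\psi,\phi)$ on $E=H^1_0(\Omega)^2$, for which no evident test-function pair on the full system works; the device of testing the \emph{reduced} scalar equation with $T w$ is precisely what circumvents this, trading missing coercivity in $\psi$ for the favourable sign (Case 1) or smallness-in-$1/\alpha$ (Case 2) of the compact term.
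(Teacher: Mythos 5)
Your argument is correct, and the central mechanism coincides with the paper's: both reduce to a scalar problem in $w$ whose a priori estimate is governed by the quadratic form $\|w\|_{L^2}^2+\alpha^{-1}\int_\Omega d\,|(-\Delta)^{-1}w|^2\,dx$, which is coercive under either hypothesis. Where you diverge is in the surrounding machinery. The paper never writes your reduced equation $Lw=g$ explicitly; it applies $(-\Delta)^{-1}$ once more and solves the symmetric problem $e(\tilde w,v)=((-\Delta)^{-1}g,v)_{L^2(\Omega)}$ with $e(w,v):=(w,v)_{L^2(\Omega)}+\alpha^{-1}(d(-\Delta)^{-1}w,(-\Delta)^{-1}v)_{L^2(\Omega)}$ by Lax--Milgram on $L^2(\Omega)$, delivering existence, uniqueness, and the bound in one stroke. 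You instead test $Lw=g$ with $(-\Delta)^{-1}w$ --- which yields precisely $e(w,w)=\langle g,(-\Delta)^{-1}w\rangle$ --- deduce injectivity plus an $L^2$-estimate, and pass to surjectivity via the Fredholm alternative, using the compactness of $\alpha^{-1}d(-\Delta)^{-1}$ through $H^1_0(\Omega)\Subset L^2(\Omega)$. Both routes are sound and equivalent in strength; yours makes the block elimination and Fredholm structure explicit (which nicely substantiates your closing remark about why the naive bilinear form on the full product space $H^1_0(\Omega)^2$ is not coercive), while Lax--Milgram is a touch more economical. Your constant threshold also matches the paper's: $C_0:=\|(-\Delta)^{-1}\|_{\mathcal L(L^2(\Omega))}$ equals your $C_P^2$ (both are $\lambda_1^{-1}$ for the first Dirichlet eigenvalue $\lambda_1$), so $\alpha>C_0^2M$ and $\alpha>MC_P^4$ are the same condition.
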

\begin{proof}
    Let $r_1$ and $r_2$ be arbitrary in $H^{-1}(\Omega)$ and consider the equation
    $B \begin{psmallmatrix} \delta w \\ \delta \psi \end{psmallmatrix} = \begin{psmallmatrix} r_1 \\ r_2 \end{psmallmatrix}$, i.e.,
    \begin{equation}
        \label{eq:auxi2}
        \left \{
            \begin{aligned}
                - \Delta \delta \psi + \frac{1}{\alpha} \delta w & = r_1 && \text{in } \Omega, \qquad \delta\psi = 0 \quad \text{on } \partial\Omega \\
                - \Delta \delta w - d \delta \psi &= r_2 && \text{in } \Omega, \qquad \delta w = 0 \quad \text{on } \partial\Omega.
            \end{aligned}
        \right.
    \end{equation}
    Obviously, $-\Delta$ is isomorphic as an operator from $H^1_0(\Omega)$ to $H^{-1}(\Omega)$, and $(-\Delta)^{-1}: L^2(\Omega) \to L^2(\Omega)$ is self-adjoint. We now consider the continuous bilinear form $e: L^2(\Omega) \times L^2(\Omega) \to \R$ defined via
    \begin{equation*}
        e(w,v) = (w,v)_{L^2(\Omega)} + \frac{1}{\alpha} \left( d (-\Delta)^{-1} w, (-\Delta)^{-1} v \right)_{L^2(\Omega)}, \quad \text{for all }w, v \in L^2(\Omega),
    \end{equation*}
    where $(\cdot, \cdot)_{L^2(\Omega)}$ stands for the inner product in $L^2(\Omega)$. 
    We now show that $e$ is coercive, i.e., there exists a constant $\lambda >0$ such that
    \begin{equation}
        \label{eq:coercive}
        e(w,w) \geq \lambda \|w \|_{L^2(\Omega)}^2 \quad \text{for all } w \in L^2(\Omega).
    \end{equation}
    If $d \geq 0$ almost everywhere, then \eqref{eq:coercive} holds with $\lambda = 1$. It therefore remains to prove \eqref{eq:coercive} for the case where $\alpha$ is large enough.
    To this end, we observe for any $w \in L^2(\Omega)$ that
    \begin{equation*}
        \begin{aligned}
            e(w,w) &\geq \|w \|_{L^2(\Omega)}^2 - \frac{1}{\alpha} \| d\|_{L^\infty(\Omega)} \| (-\Delta)^{-1}w \|_{L^2(\Omega)}^2\\
            & \geq \|w \|_{L^2(\Omega)}^2 \left(1 - \frac{C_0^2M}{\alpha} \right)
        \end{aligned}
    \end{equation*}
    with $C_0 := \| (-\Delta)^{-1} \|_{\mathcal L(L^2(\Omega))}$. This yields \eqref{eq:coercive} provided that $\alpha > C_0^2M$.

    The Lax--Milgram theorem now implies that there exist a unique $\tilde{w} \in L^2(\Omega)$ and a constant $C = C(\alpha, M)>0$ such that
    \begin{equation}
        \label{eq:auxi}
        e(\tilde{w}, v) = \left( (-\Delta)^{-1}(r_2 + d (-\Delta)^{-1}r_1), v \right)_{L^2(\Omega)} \quad \text{for all} \quad v \in L^2(\Omega)
    \end{equation}
    and
    \begin{equation}\label{eq:auxi_apriori}
        \| \tilde{w} \|_{L^2(\Omega)} \leq C \left( \| r_1 \|_{H^{-1}(\Omega)} + \| r_2 \|_{H^{-1}(\Omega)} \right).
    \end{equation}
    Let $\delta\psi\in H^1_0(\Omega)$ be the solution to
    \begin{equation}
        \label{eq:auxi3}
        - \Delta \delta \psi + \frac{1}{\alpha} \tilde{w} = r_1 \quad \text{in } \Omega, \quad \delta \psi = 0 \quad \text{on } \partial\Omega,
    \end{equation}
    and let $\delta w\in H^1_0(\Omega)$ be the corresponding solution to the second equation in \eqref{eq:auxi2}. Then it follows from \eqref{eq:auxi_apriori} that
    \begin{equation*}
        \| \delta w \|_{H^1_0(\Omega)} + \| \delta\psi \|_{H^1_0(\Omega)} \leq C \left( \| r_1 \|_{H^{-1}(\Omega)} + \| r_2 \|_{H^{-1}(\Omega)} \right).
    \end{equation*}

    Finally, it follows from \eqref{eq:auxi3}, the second equation in \eqref{eq:auxi2}, and \eqref{eq:auxi}
    \begin{equation*}
        \begin{aligned}
            \delta w &= (-\Delta)^{-1} \left[ r_2 + d \delta \psi \right]\\
            & = (-\Delta)^{-1} \left[ r_2 + d (-\Delta)^{-1} \left( r_1 - \frac{1}{\alpha} \tilde{w} \right) \right] \\
            & = (-\Delta)^{-1} \left[ r_2 + d (-\Delta)^{-1} r_1 \right] - \frac{1}{\alpha}(-\Delta)^{-1} \left[ d (-\Delta)^{-1} \tilde{w} \right] \\
            & = \tilde{w},
        \end{aligned}
    \end{equation*}
    which concludes the proof.
\end{proof}

We now arrive at the local convergence of the semi-smooth Newton iteration.
\begin{theorem} \label{thm:Newton}
    Let $y_d \in L^\infty(\Omega)$ and $\alpha >0$ be such that either $\| y_d \|_{L^\infty(\Omega)} \leq 1$ or $\alpha$ is large enough. Assume that $(w, \psi)$ is a solution to \eqref{eq:OS_NE}. Then there exists a constant $\rho> 0$ such that for $w^0 \in B_{H^1_0(\Omega)}( w, \rho)$ and $\psi^0 \in B_{H^1_0(\Omega)}(\psi, \rho)$, the semi-smooth Newton iteration \eqref{eq:N_step} converges superlinearly in $H^1_0(\Omega) \times H^1_0(\Omega)$ to $(w, \psi)$.
\end{theorem}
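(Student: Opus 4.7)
The plan is to derive this as a direct application of the standard local convergence theorem for semi-smooth Newton methods in Banach spaces, e.g., \cite[Thm.~3.13]{Ulbrich2011}. That theorem requires two ingredients: semi-smoothness of the residual
\[
  F:(w,\psi)\mapsto\begin{pmatrix}-\Delta\psi+\alpha^{-1}w\\-\Delta w-[f_1(\psi)-y_df_2(\psi)]\end{pmatrix}
\]
from $H^1_0(\Omega)^2$ into $H^{-1}(\Omega)^2$ at the solution $(w,\psi)$, and uniform bounded invertibility of a family of generalized Newton derivatives at points in a neighborhood of $(w,\psi)$. The first ingredient is already established in the paragraph preceding the theorem via \cite[Thm.~3.49]{Ulbrich2011} and the embedding $L^2(\Omega)\hookrightarrow H^{-1}(\Omega)$, so the bulk of the work lies in the second ingredient, which is precisely what \cref{lem:Newton} is designed to provide.

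Next I would identify the linear operator in \eqref{eq:N_step} as an element of the generalized Newton derivative of $F$ at $(w^k,\psi^k)$. Since $f_1$ and $f_2$ are globally Lipschitz $PC^1$-functions with a single kink at $\psi=0$, \cite[Thm.~3.49]{Ulbrich2011} shows that any measurable selection of the pointwise generalized Jacobian of $f_1-y_df_2$ serves as a Newton derivative of the corresponding superposition operator from $H^1_0(\Omega)$ to $L^2(\Omega)$. A direct computation yields $(f_1-y_df_2)'(\psi)=(1+y_d)(1+2\psi)^{-3/2}$ on $\{\psi>0\}$ and $(1-y_d)(1-2\psi)^{-3/2}$ on $\{\psi<0\}$, while on the null set $\{\psi=0\}$ any measurable value may be chosen. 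The multiplier $d^k$ in \eqref{eq:hk_formula} is such a selection, so the matrix on the left-hand side of \eqref{eq:N_step} coincides with the operator $B$ of \cref{lem:Newton} with $d:=d^k$.

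Finally, I would verify the hypotheses of \cref{lem:Newton} uniformly in $k$. The iterate-independent estimate $(1\pm 2\psi^k)^{-3/2}\leq 1$ on the respective sets yields $\|d^k\|_{L^\infty(\Omega)}\leq 1+\|y_d\|_{L^\infty(\Omega)}=:M$. If $\|y_d\|_{L^\infty(\Omega)}\leq 1$, then $1\pm y_d\geq 0$ and hence $d^k\geq 0$ almost everywhere, so the first alternative of \cref{lem:Newton} applies; otherwise the second alternative applies provided $\alpha$ is sufficiently large (concretely, $\alpha>C_0^2M$ with $C_0$ as in the proof of \cref{lem:Newton}). In either case, \cref{lem:Newton} supplies a constant $C=C(\alpha,M)$ with $\|B_k^{-1}\|_{\mathcal L(H^{-1}(\Omega)^2,H^1_0(\Omega)^2)}\leq C$ independently of $(w^k,\psi^k)$. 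Combining this uniform invertibility with the semi-smoothness of $F$ and applying \cite[Thm.~3.13]{Ulbrich2011} then produces the desired $\rho>0$ and local superlinear convergence of \eqref{eq:N_step} to $(w,\psi)$ in $H^1_0(\Omega)\times H^1_0(\Omega)$. The main subtle point is the consistent treatment of the Newton-derivative selection on $\{\psi^k=0\}$, which is Lebesgue-null but needs to be handled carefully so that the identification with \cref{lem:Newton} is clean; once the uniform multiplier bound is in place, the remaining steps are essentially bookkeeping.
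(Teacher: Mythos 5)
Your proposal is correct and follows the paper's route exactly: establish the uniform bound $\|d^k\|_{L^\infty(\Omega)}\leq 1+\|y_d\|_{L^\infty(\Omega)}$ and $d^k\geq 0$ when $\|y_d\|_{L^\infty(\Omega)}\leq 1$ from \eqref{eq:hk_formula}, invoke \cref{lem:Newton} for uniformly bounded invertibility of the Newton-step operator, and conclude via the standard local superlinear convergence theorem for semi-smooth Newton methods (the paper cites \cite[Thm.~8.16]{ItoKunisch2008} where you cite Ulbrich, but these are interchangeable for this purpose). One small slip worth correcting: the set $\{\psi^k=0\}$ is \emph{not} in general Lebesgue-null (indeed, the constructed example in \cref{sec:numerical_experiment} has $|\{\bar\psi=0\}|=1-\beta>0$); however this does not undermine your argument, since the Newton derivative of a superposition operator of a $PC^1$-function remains valid for any measurable pointwise selection of the generalized Jacobian even when the kink set has positive measure.
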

\begin{proof}
    From \eqref{eq:hk_formula}, we obtain that $\| d^k\|_{L^\infty(\Omega)} \leq 1 + \| y_d \|_{L^\infty(\Omega)}$ and $d^k \geq 0$ a.e. if $\| y_d \|_{L^\infty(\Omega)} \leq 1$. The claim then follows from \cref{lem:Newton} together with \cite[Thm.~8.16]{ItoKunisch2008}.
\end{proof}

\section{Numerical experiments} \label{sec:numerical_experiment}

We now illustrate the solvability of the relaxed optimality system \eqref{eq:reduced_OS} using the semi-smooth Newton method presented in \cref{sec:semi-smooth} using a numerical example.

Specifically, we consider $\Omega=(0,1)^2\subset \R^2$ and create a uniform triangular Friedrichs--Keller triangulation with $n_h\times n_h$ vertices which is the basis for a finite element discretization of the two elliptic equations in \eqref{eq:OS_NE}. We then compute a solution $(w_h, \psi_h)$ from \eqref{eq:OS_NE} by the presented semi-smooth Newton iteration, starting with $(w^0,\psi^0)=(0,0)$ and terminating whenever the number of iterations reaches $25$ or the active sets $\{\psi^k\geq 0\}$ corresponding to two consecutive steps coincide. If the iteration is successful, we recover $y_h$ via \eqref{eq:represent_y} and $u_h$ via the third equation of \eqref{eq:OS_exam}. The Python implementation using DOLFIN \cite{LoggWells2010a,LoggWellsEtAl2012a} that was used to generate the following results can be downloaded from \url{https://github.com/clason/nonsmoothquasilinear}.

We choose the target function $y_d$ based on a constructed example, setting
\begin{align*}
    \bar y &= x_1^4 \left[ (x_1 -\beta)^4 + 2(x_1 - \beta)^5 \right] \sin(\pi x_2)\1_{[0,\beta]}(x_1),\\
    \bar u &= -(1+|\bar y|)\Delta \bar y - \sign(\bar y)\left|\nabla \bar y\right|^2, \\
    \bar w &= -\alpha \bar u, \\
    \bar \psi &= \bar y \left( 1 + \frac{1}{2} |\bar y | \right),\\
    y_d &= \bar y + (1+|\bar y|)\Delta \bar w, 
\end{align*}
for a parameter $\beta \in [0.5,1]$; see \cref{fig:state_adjoint} for $\alpha = 10^{-7}$ and $\beta = 0.85$.
\begin{figure}[t]
    \centering
    \begin{subfigure}[t]{0.49\textwidth}
        \centering
        \includegraphics[width=\linewidth]{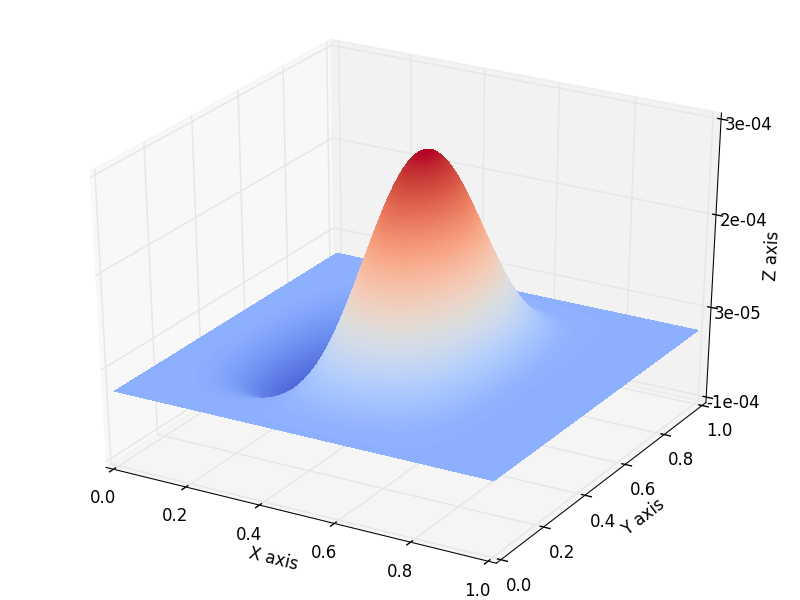}
        \caption{exact state $\bar y$}
        \label{fig:ex_state}
    \end{subfigure}
    \begin{subfigure}[t]{0.49\textwidth}
        \centering
        \includegraphics[width=\linewidth]{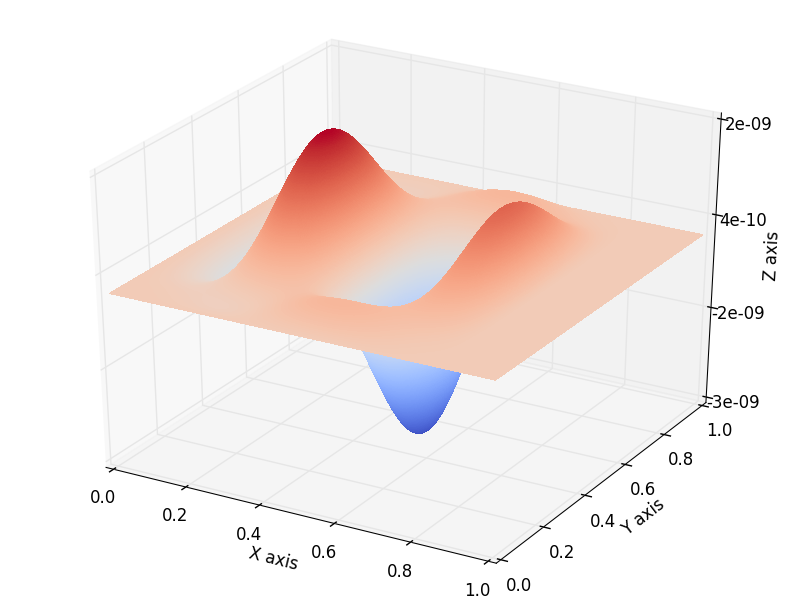}
        \caption{exact adjoint $\bar w=-\alpha\bar u$}
        \label{fig:ex_adjoint}
    \end{subfigure}
    \caption{constructed exact solution for $\alpha = 10^{-7}$, $\beta = 0.85$}
    \label{fig:state_adjoint}
\end{figure}
We point out that for $\beta \in (0.5,1)$, the sets on which the value of $\bar y$ is positive, negative, and zero all have positive measure. Moreover, 
\begin{equation*}
    \left | \left\{ \bar y = 0 \right\} \right|= \left | \left\{ \bar \psi = 0 \right\} \right| = 1 - \beta,
\end{equation*}
and $\|y_d\|_{L^\infty(\Omega)} \leq 1$ for any $\alpha$ small enough and for any $\beta \in [0.5,1]$. From this and \cref{thm:Newton}, we can deduce that the semi-smooth Newton method for solving \eqref{eq:OS_NE} will converge locally superlinearly.

The results for different values of $n_h$, $\alpha$, and $\beta$ are given in \cref{tab:convergence}, where we list the relative $H^1_0$ errors for the computed state $y_h$ and adjoint $w_h$ as well as the number of semi-smooth Newton iterations. For the sake of completeness, we also give the $L^\infty$ norm of $y_d$ for the chosen parameters, verifying that $\|y_d\|_{L^\infty(\Omega)}\leq 1$ in all cases.
We first address the dependence on $n_h$. As can be seen from \cref{tab:convergence:mesh}, the relative errors decrease linearly with increasing $n_h$, which matches the expected $\mathcal{O}(h)$ convergence of the piecewise linear finite element approximation. Also, the number of semi-smooth Newton iterations ($2$--$4$) stays constant, demonstrating the mesh independence that is usually the consequence of an infinite-dimensional convergence result like \cref{thm:Newton}.
The dependence on $\alpha$ is shown in \cref{tab:convergence:alpha}. We point out that the Newton method is relatively robust with respect to this parameter with the required number of iterations only starting to increase from $3$ to $25$ for $\alpha<10^{-6}$.
Finally, we comment on the dependence of $\beta$ shown in \cref{tab:convergence:beta}. Since meas$\{\bar y =0 \} \to 0$ for $\beta \to 1$, it is not surprising that the relative errors for $\bar y$ decrease quickly as $\beta$ increases. Here we observe only a slight increase in the number of semi-smooth Newton iterations from $3$ to $6$.
\begin{table}[t]
    \centering
    \begin{subtable}[t]{\textwidth}
        \centering
        \begin{tabular}{%
                S[table-format=4]
                S[table-format=1e-1,scientific-notation=true,round-mode=places,round-precision=2]
                S[table-format=1.1]
                S[table-format=1.2e-1,scientific-notation=true,round-mode=places,round-precision=2]
                S[table-format=1.2e-1,scientific-notation=true,round-mode=places,round-precision=2]
                S[table-format=2]
                S[table-format=1.2e-1,scientific-notation=true,round-mode=places,round-precision=2]
            }
            \toprule
            {$n_h$} & {$\alpha$} & {$\beta$} & {$\frac{\| y_h - \bar y\|_{H^1_0(\Omega)}}{\|\bar y\|_{H^1_0(\Omega)}}$} & {$\frac{\| w_h - \bar w\|_{H^1_0(\Omega)}}{\|\bar w\|_{H^1_0(\Omega)}}$} & {\#\,SSN} & {$\|y_d \|_{L^\infty(\Omega)} $} \\
            \midrule
            100	& 1e-6 & 0.8 & 0.0032749713163498563		& 0.029153100189308785 	& 2 & 0.000206922267655 \\
            200	& 1e-6 & 0.8 & 0.0016603040704769316		& 0.015400971381311359 	& 4 & 0.000207041692934 	\\
            400	& 1e-6 & 0.8 & 0.0008357366096390089		& 0.007924652139925669 	& 3 & 0.000207041692934	\\
            800	& 1e-6 & 0.8 & 0.0004192504175491312		& 0.004026679039232571 	& 3 & 0.000207054280755 	\\
            1000& 1e-6 & 0.8 & 0.0003356296997407327		& 0.0032365982649012522 	& 3 & 0.000207054028385	\\
            \bottomrule
        \end{tabular}
        \caption{dependence on $n_h$}\label{tab:convergence:mesh}
    \end{subtable}

    \medskip

    \begin{subtable}[t]{\textwidth}
        \centering
        \begin{tabular}{%
                S[table-format=3]
                S[table-format=1e-1,scientific-notation=true,round-mode=places,round-precision=2]
                S[table-format=1.1]
                S[table-format=1.2e-1,scientific-notation=true,round-mode=places,round-precision=2]
                S[table-format=1.2e-1,scientific-notation=true,round-mode=places,round-precision=2]
                S[table-format=2]
                S[table-format=1.2e-1,scientific-notation=true,round-mode=places,round-precision=2]
            }
            \toprule
            {$n_h$} & {$\alpha$} & {$\beta$} & {$\frac{\| y_h - \bar y\|_{H^1_0(\Omega)}}{\|\bar y\|_{H^1_0(\Omega)}}$} & {$\frac{\| w_h - \bar w\|_{H^1_0(\Omega)}}{\|\bar w\|_{H^1_0(\Omega)}}$} & {\#\,SSN} & {$\|y_d \|_{L^\infty(\Omega)} $} \\
            \midrule
            800	& 1e-2 & 0.8	& 0.0635839573071		& 0.0135972357471 	& 4	 & 0.098304 			\\
            800	& 1e-4 & 0.8 	& 0.00876247105136		& 0.00732413971746 	& 3 & 0.00098304 			\\
            800	& 1e-6 & 0.8 	& 0.000419250417549 	& 0.00402667903923 	& 3 & 0.000207054280755 	\\
            800	& 1e-8 & 0.8 	& 2.3210764209021246e-05& 0.002191885233352805 & 25 & 0.0002026756942 	\\
            \bottomrule
        \end{tabular}
        \caption{dependence on $\alpha$}\label{tab:convergence:alpha}
    \end{subtable}

    \medskip

    \begin{subtable}[t]{\textwidth}
        \centering
        \begin{tabular}{%
                S[table-format=3]
                S[table-format=1e-1,scientific-notation=true,round-mode=places,round-precision=2]
                S[table-format=1.1]
                S[table-format=1.2e-1,scientific-notation=true,round-mode=places,round-precision=2]
                S[table-format=1.2e-1,scientific-notation=true,round-mode=places,round-precision=2]
                S[table-format=2]
                S[table-format=1.2e-1,scientific-notation=true,round-mode=places,round-precision=2]
            }
            \toprule
            {$n_h$} & {$\alpha$} & {$\beta$} & {$\frac{\| y_h - \bar y\|_{H^1_0(\Omega)}}{\|\bar y\|_{H^1_0(\Omega)}}$} & {$\frac{\| w_h - \bar w\|_{H^1_0(\Omega)}}{\|\bar w\|_{H^1_0(\Omega)}}$} & {\#\,SSN} & {$\|y_d \|_{L^\infty(\Omega)} $} \\
            \midrule
            800	& 1e-5 & 0.5 & 0.00702996981895		& 0.0119819234703 	& 3 & 1.5e-05 			\\
            800	& 1e-5 & 0.7 & 0.00268166116258 		& 0.00710685957014 	& 3 & 0.000107009251408 	\\
            800	& 1e-5 & 0.9 & 0.0014146893988			& 0.00426761449113 	& 4 & 0.000507493724079 	\\
            800	& 1e-5 & 1.0 & 8.64719106979e-05 		& 0.00338749750832 	& 6 & 0.000949785024956 	\\
            \bottomrule
        \end{tabular}
        \caption{dependence on $\beta$}\label{tab:convergence:beta}
    \end{subtable}
    \caption{numerical results: number of Newton iterations and relative errors for state $\bar y$ and adjoint $\bar w$ in dependence of $n_h$, $\alpha$, and $\beta$}
    \label{tab:convergence}
\end{table}

\section{Conclusions}

We have considered optimal control problems for a quasilinear elliptic differential equation with a nonlinear coefficient in the leading term that is Lipschitz continuous and directionally but not Gâteaux differentiable. By passing to the limit in a regularized equation, C- and strong stationarity conditions can be derived. If the nonlinear coefficient is a piecewise differentiable apart from a countable set of points, both stationarity conditions coincide and are equivalent to a relaxed optimality system that is amenable to numerical solution by a semi-smooth Newton method. This is illustrated by a numerical example.

This work can be extended in several directions. First, second-order sufficient optimality conditions can be considered based on the approach in \cite{Livia2018} for an optimal control problem of non-smooth, semilinear parabolic equations. Furthermore, a practically relevant issue would be to derive error estimates for the finite element approximation of \eqref{eq:P} as in the case of smooth settings \cite{CasasDhamo2012,CasasTroltzsch2011}. Finally, the results derived in this work can be used to study the Tikhonov regularization of parameter identification problems for non-smooth quasilinear elliptic equations.

\appendix

\section{Auxiliary lemmas}\label{appendix}

The first lemma about monotonicity of an auxiliary problem is needed in \cref{thm:regu} to show higher regularity of the state equation \eqref{eq:state}.
\begin{lemma} \label{lem:mono}
    Assume that \cref{ass:quasi_func2} is fulfilled and $\lambda > 0$. Then the operator 
    \begin{equation}
        B: H^1_0(\Omega) \to H^{-1}(\Omega), \qquad B(z) := -\div\left[\lambda \nabla z + b(\nabla z)\right], 
    \end{equation}
    is maximally monotone.
\end{lemma}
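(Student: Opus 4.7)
The plan is to verify that $B$ is monotone and hemicontinuous, from which maximal monotonicity follows by the standard result that an everywhere defined, monotone, hemicontinuous operator on a reflexive Banach space into its dual is maximally monotone (see, e.g., \cite[Thm.~26.A]{Zeidler2B} or Brézis' characterization). Since $H^1_0(\Omega)$ is reflexive and $B$ is clearly well-defined on all of $H^1_0(\Omega)$ (because $b$ is globally Lipschitz, so $b(\nabla z)\in L^2(\Omega)^N$ whenever $z\in H^1_0(\Omega)$), it is only these two properties that need to be checked.

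\emph{Monotonicity.} For $z_1,z_2 \in H^1_0(\Omega)$, integration by parts gives
\begin{equation*}
    \langle B(z_1)-B(z_2), z_1-z_2\rangle = \lambda\|\nabla(z_1-z_2)\|_{L^2(\Omega)}^2 + \int_\Omega \bigl(b(\nabla z_1)-b(\nabla z_2)\bigr)\cdot \nabla(z_1-z_2)\,dx.
\end{equation*}
The monotonicity of $b$ from \cref{ass:quasi_func2} makes the integral nonnegative, and the first term is nonnegative since $\lambda>0$. Hence $B$ is monotone (in fact, strongly monotone).

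\emph{Hemicontinuity.} Fix $z,w,v\in H^1_0(\Omega)$ and consider $\varphi(t) := \langle B(z+tw), v\rangle$ for $t\in\R$. The contribution of the linear part $\int_\Omega \lambda\nabla(z+tw)\cdot\nabla v\,dx$ is affine in $t$ and thus continuous. For the nonlinear part, as $t\to t_0$, the integrand $b(\nabla z + t\nabla w)\cdot\nabla v$ converges pointwise a.e.\ by continuity of $b$, and the global Lipschitz continuity of $b$ yields the domination
\begin{equation*}
    |b(\nabla z+t\nabla w)\cdot \nabla v| \leq \bigl(|b(0)| + L_b|\nabla z| + L_b(|t_0|+1)|\nabla w|\bigr)|\nabla v|
\end{equation*}
for all $t$ in a neighborhood of $t_0$, which lies in $L^1(\Omega)$ by the Cauchy--Schwarz inequality. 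Lebesgue's dominated convergence theorem then shows $\varphi(t)\to\varphi(t_0)$, so $B$ is hemicontinuous.

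Combining the two properties with the cited result yields maximal monotonicity of $B$. The only place requiring some care is hemicontinuity, where a uniform integrable majorant is needed, but the global Lipschitz hypothesis on $b$ in \cref{ass:quasi_func2} makes this routine. No use of the mild growth bound on $b$ in the case $N\geq 3$ is needed here.
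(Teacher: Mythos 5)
Your proof is correct and follows essentially the same route as the paper: establish monotonicity (directly from the monotonicity of $b$) and hemicontinuity, then invoke the standard result that an everywhere-defined, monotone, hemicontinuous operator from a reflexive Banach space into its dual is maximally monotone (the paper cites \cite[Prop.~32.7]{Zeidler2B}). The only difference is that you spell out the dominated-convergence argument for hemicontinuity, whereas the paper merely asserts it follows from the continuity of $b$; this is a harmless elaboration, not a different approach.
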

\begin{proof}
    Obviously, $B$ is monotone since $b$ is monotone.
    We now show that $B$ is hemicontinuous, i.e., the mapping $[0,1] \ni t \mapsto \langle B(z_1 + tz_2), z_3 \rangle \in \R$ is continuous for all $z_1,z_2,z_3 \in H^1_0(\Omega)$. To this end, observe that for any $z_1,z_2,z_3 \in H^1_0(\Omega)$ and any $t \in [0,1]$,
    \begin{equation}
        \notag
        \langle B(z_1 + tz_2), z_3 \rangle = \lambda \int_{\Omega} \left( \nabla z_1 + t \nabla z_2 \right) \cdot \nabla z_3 dx + \int_{\Omega} b\left( \nabla z_1 + t \nabla z_2 \right) \cdot \nabla z_3 dx.
    \end{equation}
    Together with the continuity of $b$, this implies the hemicontinuity of $B$. The maximal monotonicity of $B$ then follows from \cite[Prop.~32.7]{Zeidler2B}.
\end{proof}

The next lemma shows $H^2$-regularity of the solutions to the state equation with a $PC^1$ nonlinearity and is needed in \cref{sec:semi-smooth} to show Newton differentiability of the relaxed optimality system \eqref{eq:OS_exam}.
\begin{lemma} \label{lem:regularity}
    Let $\Omega$ be a convex domain in $\R^{N}$ with $N \in \{2,3\}$.
    Assume that \cref{ass:quasi_func1} is valid. Assume furthermore that $a$ is a $PC^1$-function. Then, for each $u \in L^2(\Omega)$, the equation
    \begin{equation} \label{eq:state_num}
        \left \{
            \begin{aligned}
                -\div [a(y)\nabla y ] &= u && \text{in } \Omega, \\
                y &=0 && \text{on } \partial\Omega
            \end{aligned}
        \right.
    \end{equation} has a unique solution $y\in H^2(\Omega) \cap H^1_0(\Omega)$.
\end{lemma}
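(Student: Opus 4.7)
The plan is to apply the Kirchhoff transformation to reduce \eqref{eq:state_num} to a Poisson problem, invoke $H^2$-regularity on the convex domain, and then pull the regularity back to $y$ using the $PC^1$-structure of $a$.

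First, since $L^2(\Omega)\hookrightarrow W^{-1,p^*}(\Omega)$ for some $p^*>N$ in dimensions $N\in\{2,3\}$, \cref{thm:existence} applied with $b\equiv 0$ yields a unique weak solution $y\in H^1_0(\Omega)\cap C(\overline\Omega)$. Introducing $\theta := K(y)$ with $K(t):=\int_0^t a(\varsigma)\,d\varsigma$, and noting that $K'=a\geq a_0>0$ with $a$ locally Lipschitz makes $K$ a bi-Lipschitz $C^1$-diffeomorphism of $\R$, the chain rule gives $\theta\in H^1_0(\Omega)$ with $\nabla\theta = a(y)\nabla y$. The weak formulation of \eqref{eq:state_num} then reads $-\Delta\theta = u$ in $\Omega$, $\theta=0$ on $\partial\Omega$. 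Because $u\in L^2(\Omega)$ and $\Omega$ is convex, the classical $H^2$-regularity result on convex domains (Grisvard) furnishes $\theta\in H^2(\Omega)\cap H^1_0(\Omega)$.

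Next, I would recover $H^2$-regularity for $y = K^{-1}(\theta)$ via a product-rule computation. Setting $\phi:=1/a$, which is bounded and Lipschitz on the range of $y$, the Marcus--Mizel chain rule gives $\phi(y)\in H^1(\Omega)\cap L^\infty(\Omega)$ and $\nabla y = \phi(y)\nabla\theta\in L^2(\Omega)^N$. Differentiating this identity once more yields, formally,
\begin{equation*}
    \partial_i\partial_j y = \phi(y)\,\partial_i\partial_j\theta + \phi'(y)\,\partial_i y\,\partial_j\theta.
\end{equation*}
The first summand lies in $L^2(\Omega)$ since $\phi(y)\in L^\infty(\Omega)$ and $D^2\theta\in L^2(\Omega)$. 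For the second, Sobolev embedding in dimension $N\in\{2,3\}$ gives $\nabla\theta\in L^6(\Omega)^N$ (any $L^q$, $q<\infty$, if $N=2$), hence $\nabla y = \phi(y)\nabla\theta\in L^6(\Omega)^N$ as well; the product $\partial_i y\,\partial_j\theta$ therefore belongs to $L^3(\Omega)\subset L^2(\Omega)$, while $\phi'(y)$ is essentially bounded. This shows $D^2 y\in L^2(\Omega)$, i.e., $y\in H^2(\Omega)\cap H^1_0(\Omega)$.

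The main obstacle is the rigorous justification of the chain rule for $\phi(y)$ at points where $\phi$ (inherited from $a$) fails to be differentiable. Since every $PC^1$-function is locally Lipschitz, Rademacher's theorem implies that the non-differentiability set $D_a$ has Lebesgue measure zero. Stampacchia's theorem then forces $\nabla y = 0$ a.e.\ on the preimage $\{y\in D_a\}$, so the identity $\nabla\phi(y) = \phi'(y)\nabla y$ holds a.e.\ regardless of the non-differentiability of $\phi$ on $D_a$, and the formal product-rule computation above becomes rigorous. This is precisely where the $PC^1$-hypothesis is used essentially.
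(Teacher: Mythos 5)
Your proof is correct and follows the same strategy as the paper: Kirchhoff transformation to reduce to a Poisson problem, $H^2$-regularity on the convex domain via Grisvard, and a chain rule for Lipschitz outer functions to transfer the regularity back to $y=K^{-1}(\theta)$. The only differences are cosmetic: you differentiate $\phi=1/a$ and invoke Rademacher plus the null-set Stampacchia/Serrin--Varberg property, whereas the paper differentiates $a(y)$ directly, truncates $a$ to a Lipschitz function $a_M$, and cites Gilbarg--Trudinger Theorem~7.8 for the chain rule (which already encodes that $\nabla y=0$ a.e.\ on $\{y\in D_a\}$); you estimate the lower-order product in $L^3\subset L^2$ via $\nabla\theta\in L^6$, the paper uses $\nabla\theta\in L^4$. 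Both arguments are sound and interchangeable.
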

\begin{proof}
    In view of \cref{thm:existence}, it suffices to prove the $H^2$-regularity of the unique solution $y \in  H^1_0(\Omega)\cap C(\overline\Omega)$ of \eqref{eq:state_num}.
    Setting
    \begin{equation*}
        \theta:= K(y) := \int_0^y a(t)dt,
    \end{equation*}
    equation \eqref{eq:state_num} reduces to 
    \begin{equation*}
        \left \{
            \begin{aligned}
                -\Delta \theta &= u && \text{in } \Omega, \\
                \theta &=0 && \text{on } \partial\Omega.
            \end{aligned}
        \right.
    \end{equation*}
    The regularity of solutions to Poisson's equation guarantees that $\theta \in H^2(\Omega)$; see e.g., \cite[Thm.~3.2.1.2]{Grisvard1985}. Since $y = K^{-1}(\theta)$,
    we have that
    \begin{equation*}
        \frac{\partial y}{\partial x_i} = \frac{1}{K'(K^{-1}(\theta))}\frac{\partial \theta}{\partial x_i} = \frac{1}{a(y)}\frac{\partial \theta}{\partial x_i},
    \end{equation*}
    which implies that
    $\| y\|_{H^1_0(\Omega)} \leq \frac{1}{a_0}\| \theta\|_{H^1_0(\Omega)}$. We furthermore have that
    \begin{equation*}
        \frac{\partial^2 y}{\partial x_j\partial x_i} = \frac{1}{a(y)}\frac{\partial^2 \theta}{\partial x_j\partial x_i} - \frac{1}{a^2(y)}\frac{\partial \theta}{\partial x_i}\frac{\partial a(y)}{\partial x_j}.
    \end{equation*}
    Note that, since $y \in C(\overline{\Omega})$, there exists a constant $M>0$ such that $|y(x)| \leq M$ for all $x \in \overline{\Omega}$. Defining a $PC^1$-function 
    \begin{equation*}
        a_M: \R \to \R, \qquad   
        a_M(t) = 
        \begin{cases}
            a(2M) & \text{if } t>2M,\\
            a(t) & \text{if }|t|\leq 2M,\\
            a(-2M) & \text{if } t<-2M.
        \end{cases}
    \end{equation*}
    \cref{ass:quasi_func1} implies that $a_M$ is Lipschitz continuous with Lipschitz constant $C_{2M}$. We then have $\| \nabla a_M \|_{L^\infty(\R)} \leq C_{2M}$, where $\nabla a_M $ is the weak derivative of $a_M$. From this and the chain rule (see, e.g. \cite[Thm.~7.8]{Gilbarg_Trudinger}), we arrive at $a(y) \in H^1(\Omega)$ and
    \begin{equation*}
        \frac{\partial a(y)}{\partial x_j}(x) =
        \begin{cases}
            a'(y(x))\frac{\partial y}{\partial x_j}(x) & \text{if } y(x) \notin D_a, \\
            0 & \text{otherwise}.
        \end{cases}
    \end{equation*}
    Consequently, we have
    \begin{equation*}
        \frac{\partial^2 y}{\partial x_j\partial x_i} = \frac{1}{a(y)}\frac{\partial^2 \theta}{\partial x_j\partial x_i} - \frac{\nabla a_M(y)}{a^3(y)}\frac{\partial \theta}{\partial x_i}\frac{\partial \theta}{\partial x_j} \1_{ \{ y \notin D_a \}}
    \end{equation*}
    and hence
    \begin{equation*}
        \left\| \frac{\partial^2 y}{\partial x_j\partial x_i} \right\|_{L^2(\Omega)} \leq \frac{1}{a_0} \left\| \frac{\partial^2 \theta}{\partial x_j\partial x_i} \right \|_{L^2(\Omega)} + \frac{C_{2M}}{a_0^3} \left\| \frac{\partial \theta}{\partial x_i}\right \|_{L^4(\Omega)} \left\| \frac{\partial \theta}{\partial x_j} \right\|_{L^4(\Omega)}.
    \end{equation*}
    This together with the fact that $\theta \in H^2(\Omega)\hookrightarrow W^{1,6}(\Omega) \hookrightarrow W^{1,4}(\Omega)$ yields that $y \in H^2(\Omega)$.
\end{proof}

\section*{Acknowledgments}
This work was supported by the DFG under the grants CL 487/2-1 and RO
2462/6-1, both within the priority programme SPP 1962 ``Non-smooth and
Complementarity-based Distributed Parameter Systems:
Simulation and Hierarchical Optimization''.

\printbibliography

\end{document}